\documentclass{article}

\usepackage{PRIMEarxiv}

\usepackage[utf8]{inputenc} 
\usepackage[T1]{fontenc}    
\usepackage{url}            
\usepackage{booktabs}       
\usepackage{amsfonts}       
\usepackage{nicefrac}       
\usepackage{microtype}      
\usepackage{lipsum}
\usepackage{fancyhdr}       
\usepackage{graphicx}       
\graphicspath{{figs/}}
\usepackage{epsfig}
\usepackage{graphicx}
\usepackage{wrapfig}
\usepackage{psfrag}
\usepackage{substr}
\usepackage[tight]{minitoc}
\usepackage{subfigure}
\usepackage{longtable}
\usepackage[all]{xy}
\usepackage{xcite}
\usepackage{float}
\usepackage{xr}
\usepackage{framed}

\usepackage{amsmath,amssymb,mathtools}
\usepackage{graphicx}
\usepackage{booktabs}
\usepackage[ruled,vlined,linesnumbered]{algorithm2e}
\usepackage{tikz}
\usetikzlibrary{arrows.meta,shapes.geometric,positioning,calc}

\providecommand{\R}{}\renewcommand{\R}{\mathbb{R}}
\providecommand{\E}{}\renewcommand{\E}{\mathbb{E}}
\providecommand{\Prob}{}\renewcommand{\Prob}{\mathbb{P}}
\providecommand{\ones}{}\renewcommand{\ones}{\mathbf{1}}
\providecommand{\norm}[1]{}\renewcommand{\norm}[1]{\left\lVert #1\right\rVert}
\providecommand{\inner}[2]{}\renewcommand{\inner}[2]{\left\langle #1,\,#2\right\rangle}
\providecommand{\spec}{}\renewcommand{\spec}{\operatorname{spec}}
\providecommand{\diag}{}\renewcommand{\diag}{\operatorname{diag}}
\providecommand{\proj}{}\renewcommand{\proj}{\Pi}
\providecommand{\Lap}{}\renewcommand{\Lap}{L}
\providecommand{\eps}{}\renewcommand{\eps}{\epsilon}
\providecommand{\ChebT}{}\renewcommand{\ChebT}{T}
\providecommand{\ChebU}{}\renewcommand{\ChebU}{U}

\providecommand{\argmin}{}\renewcommand{\argmin}{\operatorname*{arg\,min}}
\newcommand{\bLap}{\boldsymbol{L}}
\newcommand{\bproj}{\boldsymbol{\Pi}}
\newcommand{\bprojperp}{\boldsymbol{\Pi}_{\!\perp}}

\graphicspath{{figs/}}

\usepackage{amsthm}
\newtheorem{theorem}{Theorem}[section]
\newtheorem{lemma}[theorem]{Lemma}
\newtheorem{proposition}[theorem]{Proposition}
\newtheorem{corollary}[theorem]{Corollary}
\theoremstyle{definition}
\newtheorem{definition}[theorem]{Definition}
\newtheorem{assumption}[theorem]{Assumption}

\theoremstyle{remark}
\newtheorem{remark}[theorem]{Remark}

\newcommand{\WAVE}{\textsc{WAVE}}
\newcommand{\ChebyshevWindow}{\textsc{CHEBYSHEV-WINDOW}}

\newcommand{\WAVESTM}{\textsc{WAVE-STM}}

\usepackage{array}
\usepackage{tabularx}

\newcolumntype{L}[1]{>{\raggedright\arraybackslash}p{#1}}
\newcolumntype{C}[1]{>{\centering\arraybackslash}p{#1}}
\newcolumntype{Y}{>{\raggedright\arraybackslash}X}

\usepackage[table]{xcolor}
\usepackage{array}
\usepackage{tabularx}
\usepackage{enumitem}
\usepackage{placeins}

\usepackage[colorlinks=true,linkcolor=blue!55!black,citecolor=blue!55!black,urlcolor=blue!55!black]{hyperref}

\newif\ifshowrevisioncolors
\showrevisioncolorstrue

  {\begingroup\ifshowrevisioncolors\color{magenta}\fi}%
  {\endgroup}
  {\begingroup\ifshowrevisioncolors\color{blue}\fi}%
  {\endgroup}

\definecolor{resultviolet}{RGB}{239,234,248}

\newcolumntype{Y}{>{\raggedright\arraybackslash}X}
\newcolumntype{C}[1]{>{\centering\arraybackslash}p{#1}}

\newcommand{\Lclass}{\mathbb{L}}
\newcommand{\Response}{\mathcal{R}}
\usepackage{natbib}

\title{Accelerated Consensus and Decentralized Optimization over
Slowly Varying Networks}

\author{
Darina~Dvinskikh\\
 HSE University \\
\texttt{dmdvinskikh@hse.ru}
\And
Anton Novitskii\\
Trusted AI Research Center RAS\\
MIRAI \\
\texttt{toxandrnovitskii17@gmail.com} 
\And
Demyan Yarmoshik \\
MIRAI\\
\texttt{yarmoshik.d@miriai.org}
\And
 Alexander Rogozin\\
MIRAI\\
\texttt{rogozin.a@miriai.org}
 \And
 Vladislav Matyukhin \\
Innopolis University \\
MIRAI \\
\texttt{vladmatyukh@gmail.com}
 \And
 Alexander Gasnikov\\
Innopolis University \\
MIRAI \\
 \texttt{gasnikov@yandex.ru}
}

\begin{document}

\maketitle

\begin{abstract}
We study average consensus over slowly time-varying networks and its application to decentralized optimization. We introduce WAVE, a windowed Chebyshev method that limits the accumulated effect of network variation by restarting the recurrence after finite windows. If $\chi$ bounds the network condition number and successive communication operators satisfy $\|L_{k+1}-L_k\|\leq\beta$, WAVE reaches $\epsilon$-consensus in $O((\sqrt{\chi}+\min\lbrace\beta\chi^2,\chi\rbrace)\ln(e/\epsilon))$ communication rounds. This rate matches the fixed-network $\sqrt{\chi}$ dependence when $\beta=O(\chi^{-3/2})$ and smoothly interpolates across the full range of network variation. For piecewise-constant networks where each change is detected when it occurs and consecutive changes are at least $\tau$ rounds apart, WAVE reaches $\epsilon$-consensus in $O((\sqrt{\chi}+\chi/\tau)\ln(e/\epsilon))$ communication rounds. Finally, we use WAVE as the consensus step in an accelerated decentralized optimization method for $\alpha$-smooth convex local objectives with a $\mu$-strongly convex average. For sufficiently slow network variation, every agent obtains an $\epsilon$-solution to the global optimization problem after $\widetilde O(\sqrt{\alpha/\mu}\,\sqrt{\chi})$ communication rounds, matching the fixed-network dependence.
\end{abstract}

\section{Introduction}\label{sec:intro}
The average consensus problem requires a network of agents to compute the
mean of their private initial vectors using only neighbor-to-neighbor
communication. It is a basic component of distributed computation,
multi-agent control, distributed estimation, and decentralized
optimization
\citep{DeGroot1974,Tsitsiklis1984,JadbabaieLinMorse2003,
OlfatiSaberMurray2004,Moreau2005,NedicOlshevskyRabbat2018,
GorbunovSurvey2022,RogozinEncyclopedia2023}. Related distributed
optimization problems arise in estimation, control, resource allocation,
and machine learning
\citep{RabbatNowak2004,ForeroCanoGiannakis2010,
BeckNedicOzdaglarTeboulle2014,GiselssonEtAl2013,
KonecnyEtAl2016,McMahanEtAl2017}. In both settings, communication is often
the dominant cost, making the number of communication rounds a central
complexity measure.

For a fixed normalized gossip operator with condition number $\chi$ on
the disagreement subspace, standard gossip requires
$O(\chi\ln(e/\eps))$ communication rounds. Classical Chebyshev
acceleration improves this bound to
$O(\sqrt{\chi}\ln(e/\eps))$
\citep{XiaoBoyd2004,GolubVarga1961,GutknechtRollin2002,
NguyenRJND2024}. For fixed networks, the same $\sqrt{\chi}$ network
dependence appears in the optimal communication complexity of smooth
strongly convex decentralized optimization
\citep{ScamanBBLM2017}. For arbitrarily time-varying networks, however,
the worst-case network dependence is linear in $\chi$
\citep{Kovalev2021ADOM,Kovalev2021LB}. This gap motivates a fundamental
question: how does communication complexity depend on the rate of network
variation, and when can the fixed-network square-root acceleration be
retained?

\paragraph{Network models and approach.}
We consider two complementary models of network variation. In the
\emph{metric-drift} model, the normalized gossip operator may change at
every communication round, while consecutive normalized gossip matrices satisfy
$\norm{\Lap_{k+1}-\Lap_k}\leq\beta$. In the
\emph{piecewise-constant} model, the operator may change abruptly but
remains fixed between change points, with consecutive changes at least
$\tau$ communication rounds apart.
For both models, we introduce \WAVE{} (\textbf{W}indowed
\textbf{A}cceleration for \textbf{V}arying n\textbf{E}tworks). It applies
the classical Chebyshev recurrence on the full spectral interval
$[\chi^{-1},1]$ and restarts after finite windows. The metric-drift
schedule chooses window lengths using $\beta$ and $\chi$, while the
piecewise-constant schedule restarts at changes reported by an ideal
external detector.

\paragraph{Contributions.}
Our main contribution is a consensus method (\WAVE{}) whose communication complexity
adapts to the magnitude and frequency of network changes. We consider two
complementary settings: small changes may occur at every communication round,
or the network may change arbitrarily at detected change points and remain
fixed between them. In both settings, our bounds interpolate between the
fixed-network $\sqrt\chi$ dependence and the worst-case $\chi$ dependence.
Below, $\widetilde O$ suppresses logarithmic factors.

\begin{itemize}

\item
\emph{Consensus.}
We prove that \WAVE{} reaches $\eps$-consensus in
\[
\widetilde O\!\left(\sqrt\chi+\min\{\beta\chi^2,\chi\}\right)
\quad\text{and}\quad
\widetilde O\!\left(\sqrt\chi+\frac{\chi}{\tau}\right)
\]
communication rounds under metric drift and detected piecewise-constant
changes, respectively. The bounds interpolate between the accelerated
$\sqrt\chi$ and robust $\chi$ dependences, recovering the former when
$\beta=O(\chi^{-3/2})$ or $\tau=\Omega(\sqrt\chi)$. Thus acceleration
persists under small changes at every round or arbitrary detected jumps
that are sufficiently separated, without requiring a persistent connected
subgraph. An exact variation-of-constants argument controls the
perturbations caused by noncommuting communication operators within a
Chebyshev window and determines its length. The piecewise-constant
schedule adapts without knowing $\tau$. When
$\tau\ge\sqrt\chi\ln(2n)+2$, a flooding-and-rollback protocol implements
change detection inside the network with the same communication order
plus $O(\sqrt\chi\ln n)$ rounds.

\item
\emph{Decentralized optimization.}
For $\alpha$-smooth, $\mu$-strongly convex local objectives, let
$\kappa:=\alpha/\mu$. Using \WAVE{} for mixing in accelerated gradient
tracking \citep{LiLin2024JMLR} gives
$O(\sqrt\kappa\ln(1/\eps))$ gradient evaluations and multiplies each
consensus network factor above by $O(\sqrt\kappa\ln(1/\eps))$.
In the accelerated regimes this yields
$O(\sqrt{\kappa\chi}\ln(1/\eps))$, with the optimal fixed-network
$\sqrt{\kappa\chi}$ dependence and a single accuracy logarithm
\citep{ScamanBBLM2017}.
When only the average objective is strongly convex, \WAVESTM{} uses
restarted Similar Triangles iterations \citep{Nesterov2004} and gives
the same network factors with $\widetilde O(\sqrt\kappa)$ gradient
evaluations and a further accuracy logarithm in communication.

\end{itemize}

\paragraph{Related work and comparison.}
For arbitrary time variation, ADOM and ADOM+ attain the optimal worst-case
network dependence \citep{Kovalev2021ADOM,Kovalev2021LB}. Subgradient-push,
gradient-tracking, and primal-dual methods provide convergence under broad
connectivity assumptions
\citep{NedicOlshevsky2015,NedicOlshevskyShi2017,MarosJalden2018,
LiLin2024JMLR,RogozinLukoshkin2021}. These guarantees are designed for
worst-case variation and do not improve continuously as the network changes
more slowly.
Existing acceleration results use different restrictions on network variation.
\citet{RogozinUribe2019} allow arbitrarily large changes and retain the
fixed-network rate when the number of
changing rounds among the first $N$ iterations is
$O(N/[\sqrt{\kappa\chi}\log(\kappa\chi)])$, so the admissible fraction of
changing rounds decreases with $\kappa\chi$. Our metric-drift model allows
changes at every round, while our detected piecewise-constant model recovers
the accelerated network factor when $\tau=\Omega(\sqrt\chi)$, independently
of the objective condition number $\kappa$.
\citet{MetelevICML2023}
require all communication graphs to share a connected spanning subgraph,
and their network factor is controlled by the spectral gap of this
persistent subgraph. \citet{MontijanoMontijanoSagues2013} give sufficient
conditions for convergence of a Chebyshev
iteration over time-varying networks, but do not derive a communication
bound that adapts to the rate of variation. \citet{MetelevCMS2024} consider
a stationary, uniformly geometrically mixing Markov chain of networks and
obtain bounds with additional mixing-dependent terms. They also show that,
for first-order decentralized
optimization, removing one edge and adding another at every round can
retain a linear dependence on $\chi$. \citet{LiLin2024JMLR} use repeated
gossip for their time-varying accelerated method; \WAVE{} replaces this
mixing step when variation is slow.
Our deterministic bounds
interpolate between the fixed-network and arbitrary-variation regimes
through the operator drift or the time between detected changes.
To the best of our knowledge, our metric-drift result is the first
deterministic average-consensus complexity bound that gives this
interpolation as an explicit function of the one-step operator-norm change.
For time-varying quadratics, \citet{PasechnyukVilenskyTakac2026} study the
first-order sensitivity of Chebyshev realizations to Hessian perturbations.
Here we bound the full perturbation accumulated in a window and use it to
obtain consensus communication guarantees.

\paragraph{Organization.}
Section~\ref{sec:main-results} formulates the consensus problem and
introduces \WAVE{}. Section~\ref{subsec:main-consensus-results} states the
main consensus guarantees. Sections~\ref{sec:metric-analysis} and
\ref{sec:dwell} analyze the metric-drift and piecewise-constant models,
respectively. Section~\ref{sec:optimization} presents the application to
decentralized optimization. Section~\ref{sec:experiments} reports the
numerical results, and Section~\ref{sec:conclusion} concludes the paper.
The supplementary material contains complete proofs, implementation
details, the extension to memoryless changes, and additional experiments.

\section{Problem statement and algorithm}
\label{sec:main-results}

This section formulates the consensus problem, specifies the two models of
network variation, and introduces \WAVE{}. We first recall the classical
Chebyshev recurrence for a fixed network, then explain how finite-window
restarts extend it to changing communication operators. 

\paragraph{Notation.}
The norm $\norm{\cdot}$ is Euclidean for vectors and spectral for
matrices, and $\spec(A)$ denotes the set of eigenvalues of a square matrix $A$. For
any positive integer $p$, $\ones_p$ is the vector of $p$ ones and $I_p$ is
the $p\times p$ identity matrix; $\otimes$ denotes the Kronecker product.
The complete notation is collected in Table~\ref{tab:notation}
(Appendix~\ref{app:notation}).

\subsection{Consensus and network model}
\label{subsec:consensus-model}

We consider a network of agents represented at each communication round
$k$ by an undirected connected graph $\mathcal G_k=(V,\mathcal E_k)$, where
$V=\{1,\ldots,n\}$ is the set of agents and $\mathcal E_k$ is the set of edges. Agent $i$ holds $u_k^{(i)}\in\R^d$ and may
exchange vectors only with its current neighbors. The goal is for every
agent to compute the average
$\bar u_0:=n^{-1}\sum_{i=1}^n u_0^{(i)}$ of the initial vectors. 

Standard decentralized optimization formulations model local
communication by a symmetric positive semidefinite gossip matrix whose
sparsity agrees with the communication graph
\citep{ScamanBBLM2017,NguyenRJND2024}. 
We measure network variation by the
distance between successive communication operators, which changes under
rescaling. We therefore place all communication operators on the same
scale.

\begin{definition}[Normalized gossip matrix]
\label{def:gossip-matrix}
A matrix $\Lap\in\R^{n\times n}$ associated with a connected graph
$G=(V,\mathcal E)$ is a normalized gossip matrix if
$\Lap=\Lap^\top\succeq0$,
$\ker\Lap=\operatorname{span}\{\ones_n\}$,
$\Lap_{ij}=0$ whenever $i\neq j$ and $\{i,j\}\notin\mathcal E$, and
$\norm{\Lap}\leq1$.
\end{definition}

Throughout the paper, $\Lap_k$ denotes a normalized gossip matrix
associated with $\mathcal G_k$, with the same normalization used at every round.
Concretely, the unnormalized operators are divided by one common upper
bound on their norms rather than rescaled independently at each round.
Consequently, $\norm{\Lap_{k+1}-\Lap_k}$ compares operators expressed on
the same scale. The kernel condition identifies the consensus subspace,
while the sparsity condition makes multiplication by $\Lap_k\otimes I_d$
implementable using one neighbor exchange. We therefore count one such
multiplication as one communication round.

The Chebyshev coefficients must be valid for every operator encountered by
the algorithm. We therefore impose one uniform spectral bound on the whole
operator sequence.

\begin{assumption}[Uniform spectral gap]
\label{ass:uniform-spectrum}
There is a known $\chi\geq1$ such that
\[
 \spec(\Lap_k)\setminus\{0\}
 =\spec\!\left(\Lap_k|_{\ones_n^\perp}\right)
 \subseteq[\chi^{-1},1]
 \qquad\text{for every }k\geq0.
\]
\end{assumption}
Thus $\chi$ is a uniform upper bound on the condition number of the
communication operators on $\ones_n^\perp$.

Since $\chi$ is a conservative upper bound, any valid bound
$\chi<4$ may be replaced by $4$. We therefore assume $\chi\geq4$
throughout. For $1\leq\chi<4$, standard gossip already reaches
$\eps$-consensus in $O(\ln(e/\eps))$ communication rounds under
arbitrary network variation.

\subsection{Network variation}
\label{subsec:variation-models}

We consider two complementary restrictions on network variation. Metric
drift bounds the size of each change but allows the network to change at
every round. The piecewise-constant model allows arbitrarily large changes
but requires consecutive changes to be separated in time.

\begin{assumption}[Metric drift]
\label{ass:metric-drift}
For a known $\beta\in[0,1]$,
$\norm{\Lap_{k+1}-\Lap_k}\leq\beta$ for every $k\geq0$.
\end{assumption}

The parameter $\beta$ is available to the metric-drift scheduler. No
commutativity is assumed between consecutive operators.

\begin{assumption}[Piecewise-constant network with minimum spacing]
\label{ass:dwell-time}
There are change points $0=t_0<t_1<t_2<\cdots$ such that
$\Lap_k=\Lap_{t_i}$ for $t_i\leq k<t_{i+1}$ and
$t_{i+1}-t_i\geq\tau$, where $\tau\geq1$. A new matrix may depend on
the previous network history, and its distance from the preceding matrix
is unrestricted.
\end{assumption}

For piecewise-constant networks, an ideal external detector reports a
change at the corresponding round boundary, before the new operator is
used. The algorithm does not know $\tau$. The basic analysis treats reports as
available at no communication cost; Proposition~\ref{prop:flooding}
implements them by messages inside the network under a longer spacing condition.
\subsection{Chebyshev acceleration and \WAVE{}}
\label{subsec:wave-at-glance}

For the analysis, stack the local states as
$\boldsymbol u_k:=((u_k^{(1)})^\top,\ldots,(u_k^{(n)})^\top)^\top
\in\R^{nd}$ and define
$\boldsymbol e_k:=\boldsymbol u_k-\ones_n\otimes\bar u_0$.
To describe communication between these stacked states, we use
$\bLap_k=\Lap_k\otimes I_d$. 

\begin{definition}[$\eps$-consensus]
For $\eps\in(0,1)$, a stacked state $\boldsymbol u\in\R^{nd}$ is an
$\eps$-consensus solution relative to $\boldsymbol u_0$ if
$\norm{\boldsymbol u-\ones_n\otimes\bar u_0}
\leq
\eps\norm{\boldsymbol u_0-\ones_n\otimes\bar u_0}.$
\end{definition}

\paragraph{Fixed network.}
If $\bLap_k\equiv\bLap$, every mean-preserving method in the linear
operator model (Appendix~\ref{app:model-details}) that uses $m$
communication rounds returns $\boldsymbol e_m=p(\bLap)\boldsymbol e_0$
for a polynomial $p$ with $\deg p\le m$ and $p(0)=1$. By
Assumption~\ref{ass:uniform-spectrum}, it contracts the disagreement by
at most $\max_{\lambda\in[\chi^{-1},1]}|p(\lambda)|$. The best such
polynomial is built from the Chebyshev polynomials
$\ChebT_0(x)=1$, $\ChebT_1(x)=x$,
$\ChebT_{t+1}(x)=2x\ChebT_t(x)-\ChebT_{t-1}(x)$, which are bounded by one
on $[-1,1]$ and grow rapidly outside it. Let
\begin{equation}
\label{eq:chebyshev-residual}
 z_\chi(\lambda):=
 \frac{1+\chi^{-1}-2\lambda}{1-\chi^{-1}},
 \qquad z_0:=z_\chi(0),
 \qquad
 P_t^\chi(\lambda)
 :=\frac{\ChebT_t(z_\chi(\lambda))}{\ChebT_t(z_0)}.
\end{equation}
The affine map $z_\chi$ sends the admissible disagreement spectrum
$[\chi^{-1},1]$ to $[-1,1]$, while $z_0>1$. Hence $P_t^\chi(0)=1$, and
the residual $P_t^\chi$ is uniformly small on $[\chi^{-1},1]$.

\begin{proposition}[Classical Chebyshev residual]
\label{prop:cheb-minimax}
For $m\ge1$ and $\chi>1$, $P_m^\chi$ is the unique minimizer of
\begin{equation}\label{eq:minimax-value}
 \rho_m^\chi:=
 \min_{\deg p\le m,\ p(0)=1}\
 \max_{\lambda\in[\chi^{-1},1]}|p(\lambda)|,
 \qquad
 \rho_m^\chi
 =\frac1{\ChebT_m(z_0)}
 =\operatorname{sech}(m\theta_\chi)
 \le2e^{-2m/\sqrt\chi},
\end{equation}
where $\theta_\chi=2\operatorname{artanh}(\chi^{-1/2})$.
\end{proposition}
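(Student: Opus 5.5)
The plan has three parts: an equioscillation argument for optimality and uniqueness, a hyperbolic computation for $\ChebT_m(z_0)$, and an elementary estimate for the exponential bound. Throughout, $P_m^\chi$ is the residual from \eqref{eq:chebyshev-residual} and $z_\chi$ is the affine map defined there.

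\textbf{Optimality and uniqueness.} First, $P_m^\chi$ is admissible. It has degree $m$, satisfies $P_m^\chi(0)=1$, and its maximum modulus on $[\chi^{-1},1]$ is $1/\ChebT_m(z_0)$, because $z_\chi$ maps this interval onto $[-1,1]$ and $\ChebT_m(z_0)>0$ for $z_0>1$. Next, it equioscillates. The points $\lambda_j=z_\chi^{-1}(\cos(j\pi/m))$ for $j=0,\dots,m$ lie in $[\chi^{-1},1]$, and there $P_m^\chi$ takes the values $(-1)^j/\ChebT_m(z_0)$.

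Suppose $p$ is admissible with $\max|p|\le 1/\ChebT_m(z_0)$. Set $q:=P_m^\chi-p$. Then $\deg q\le m$ and $q(0)=0$. At the $m+1$ alternation points we have $(-1)^jq(\lambda_j)\ge0$. A zero count that handles ties (multiplicities at touching points) gives at least $m$ zeros of $q$ in $[\chi^{-1},1]$. Adding the zero at $0\notin[\chi^{-1},1]$ gives $m+1$ zeros, so $q\equiv0$. This one argument yields both minimality and uniqueness. The weak-inequality zero counting is the only delicate step; I would use the standard argument that each closed interval $[\lambda_{j+1},\lambda_j]$ contains a zero, counted with multiplicity.

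\textbf{Hyperbolic form.} Use $\ChebT_m(\cosh\theta)=\cosh(m\theta)$. It then suffices to check that $z_0=\frac{1+\chi^{-1}}{1-\chi^{-1}}=\frac{\chi+1}{\chi-1}$ equals $\cosh\theta_\chi$ with $\theta_\chi=2\operatorname{artanh}(\chi^{-1/2})$. Write $s=\chi^{-1/2}$. The identity $\cosh(2\operatorname{artanh}s)=\frac{1+s^2}{1-s^2}$ gives exactly $\frac{\chi+1}{\chi-1}$. Hence $\rho_m^\chi=1/\cosh(m\theta_\chi)=\operatorname{sech}(m\theta_\chi)$.

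\textbf{Exponential bound.} We have $\operatorname{sech}x\le 2e^{-x}$. The series $\operatorname{artanh}s=s+s^3/3+\cdots\ge s$ gives $\theta_\chi\ge2\chi^{-1/2}$. Combining these, $\rho_m^\chi\le 2e^{-2m/\sqrt\chi}$.
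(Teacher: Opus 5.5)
Your proof is correct. The hyperbolic identity $z_0=\cosh\theta_\chi$ and the estimate $\operatorname{sech}x\le2e^{-x}$ with $\theta_\chi\ge2\chi^{-1/2}$ are exactly the paper's steps. The two arguments differ only in how optimality and uniqueness are obtained.

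You use the classical equioscillation argument: $q=P_m^\chi-p$ weakly alternates at the $m+1$ extremal points and also vanishes at $0$, so a zero count forces $q\equiv0$. The paper instead passes to $x=z_\chi(\lambda)$ and interpolates at the nodes $x_j=\cos(j\pi/m)$. Because $z_0$ lies to the right of every node, the Lagrange cardinal polynomials satisfy $\operatorname{sign}\ell_j(z_0)=(-1)^j$. Hence $\ChebT_m(z_0)=\sum_j|\ell_j(z_0)|$, and every $q$ with $q(z_0)=1$ satisfies $1=\sum_j\ell_j(z_0)q(x_j)\le\ChebT_m(z_0)\max_{[-1,1]}|q|$. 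Equality in this triangle inequality forces $q(x_j)=(-1)^j/\ChebT_m(z_0)$ at all $m+1$ nodes, hence $q=\ChebT_m/\ChebT_m(z_0)$. That route gives an explicit dual certificate, and it absorbs ties at the nodes into the equality case. It therefore avoids exactly the step you flag as delicate.

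Your route makes the role of equioscillation explicit, but the zero count must be stated carefully. A simple zero at an interior node lies in two adjacent closed intervals, so ``one zero per closed interval'' does not by itself give $m$ zeros. A clean fix is induction on $m$. If no node is a zero, strict alternation gives $m$ sign changes and hence $m$ distinct zeros. If $q(\lambda_j)=0$, write $q=(\lambda-\lambda_j)r$; then $-r$, of degree at most $m-1$, weakly alternates on the remaining $m$ nodes, and the induction hypothesis applies. A minor point: since $z_\chi$ is decreasing, $\lambda_j$ increases with $j$, so the intervals are $[\lambda_j,\lambda_{j+1}]$.
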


Thus a degree-$\Theta(\sqrt\chi)$ Chebyshev polynomial contracts the
disagreement by a constant factor, and its repetition yields the
fixed-network $O(\sqrt\chi\ln(1/\eps))$ complexity
\citep{GolubVarga1961,GutknechtRollin2002,ScamanBBLM2017}. \WAVE{} also
uses windows with $m\ll\sqrt\chi$, for which $2e^{-2m/\sqrt\chi}$ may
exceed one; the exact $\operatorname{sech}$ value still gives progress.

\begin{lemma}[Short-window progress]
\label{lem:window}
If $\chi\ge4$, $m\ge1$, and $m^2\le2\chi$, then
\begin{equation}\label{eq:window-contr}
 \rho_m^\chi\le\exp\left(-\frac{4m^2}{5\chi}\right).
\end{equation}
\end{lemma}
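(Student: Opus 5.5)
The plan is to use the exact value $\rho_m^\chi=\operatorname{sech}(m\theta_\chi)$ from Proposition~\ref{prop:cheb-minimax} and reduce the claim to an elementary inequality for $\ln\cosh$ on a bounded interval. The reduction has two parts. First, $\operatorname{artanh}(u)\ge u$ for $u\in[0,1)$, so
\[
\theta_\chi=2\operatorname{artanh}(\chi^{-1/2})\ge 2\chi^{-1/2}.
\]
Second, $\operatorname{sech}$ is decreasing on $[0,\infty)$. Setting $y:=2m/\sqrt\chi$, these give
\[
\rho_m^\chi\le\operatorname{sech}(y)=\exp(-\ln\cosh y).
\]
Since $y^2=4m^2/\chi$, the target bound $\exp(-4m^2/(5\chi))$ equals $\exp(-y^2/5)$. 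It therefore suffices to prove $\ln\cosh y\ge y^2/5$. The hypothesis $m^2\le2\chi$ gives $y^2\le 8$, so we only need this on $0<y\le 2\sqrt2$.

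For that inequality I would show that $h(y):=\ln\cosh(y)/y^2$ is nonincreasing on $(0,\infty)$, and then check the value at the right endpoint. The sign of $h'(y)$ is the sign of $g(y):=y\tanh y-2\ln\cosh y$. We have $g(0)=0$ and
\[
g'(y)=y\operatorname{sech}^2y-\tanh y=\frac{y-\sinh y\cosh y}{\cosh^2y}\le0,
\]
because $\sinh y\cosh y=\tfrac12\sinh 2y\ge y$. Hence $g\le0$, so $h$ is nonincreasing. Therefore, for $y\in(0,2\sqrt2]$,
\[
h(y)\ge h(2\sqrt2)=\frac{\ln\cosh(2\sqrt2)}{8}.
\]

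The only step requiring care is this numerical endpoint check. We have $\cosh(2\sqrt2)\approx 8.47$, so $\ln\cosh(2\sqrt2)\approx 2.137>8/5=1.6$. A rigorous version: $\cosh(2\sqrt2)\ge e^{2\sqrt2}/2\ge e^{2.8}/2>e^{1.6}$, since $e^{1.2}>2$. This gives $h(2\sqrt2)>1/5$, hence $\ln\cosh y\ge y^2/5$ on the whole interval, which proves \eqref{eq:window-contr}.

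The assumption $\chi\ge4$ is used only so that $\chi>1$ and Proposition~\ref{prop:cheb-minimax} applies. The slack in the constant $4/5$ suggests no sharper estimate is needed.
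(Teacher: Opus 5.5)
Your proof is correct. It differs from the paper's in one real choice: the order in which you apply the bound $\theta_\chi\ge2\chi^{-1/2}$.

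The paper keeps the exact argument $x=m\theta_\chi$ all the way to the scalar inequality. To stay in a range where $\ln\cosh x\ge x^2/5$ holds, it must therefore also bound $x$ from above. It does this using $\chi\ge4$ (so $\chi^{-1/2}\le1/2$) and the monotonicity of $\operatorname{artanh}(s)/s$, which give $x\le2\sqrt2\ln3<25/8$. It then proves the scalar inequality on $[0,25/8]$ by noting that $\tanh x-2x/5$ has a strictly decreasing derivative, so $\ln\cosh x-x^2/5$ is minimized at an endpoint.

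You instead use the monotonicity of $\operatorname{sech}$ to pass to the smaller argument $y=2m/\sqrt\chi$ first. The range $y\le2\sqrt2$ then follows from $m^2\le2\chi$ alone, and no upper bound on $\theta_\chi$ is needed. This gives a slightly stronger statement: the hypothesis $\chi\ge4$ can be relaxed to $\chi>1$. It also shortens the interval on which the numerical endpoint check is done. Nothing is lost, because the paper itself discards the gap between $x$ and $2m/\sqrt\chi$ in its last step, via $x^2\ge4m^2/\chi$.

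Your monotone-ratio treatment of $\ln\cosh(y)/y^2$ is an equally elementary alternative to the paper's endpoint-minimum argument. It has one small advantage: a single evaluation gives $\ln\cosh y\ge\bigl(\ln\cosh(Y)/Y^2\bigr)y^2$ on any interval $[0,Y]$.
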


A window of length $m\le\sqrt\chi$ thus earns order $m/\chi$ of
logarithmic contraction per round, between the gossip rate $1/\chi$
($m=1$) and the accelerated rate $1/\sqrt\chi$ ($m\asymp\sqrt\chi$).
The complexity of \WAVE{} is therefore governed by the window length that
network variation admits. Appendix~\ref{app:chebyshev} contains the proofs
and an illustration of the residuals.

\paragraph{Chebyshev window.}
Dividing the Chebyshev recurrence by $\ChebT_{t+1}(z_0)$ gives
$P_{t+1}^\chi(\lambda)=a_tz_\chi(\lambda)P_t^\chi(\lambda)
-c_tP_{t-1}^\chi(\lambda)$, where
$a_t:=2\ChebT_t(z_0)/\ChebT_{t+1}(z_0)$ and
$c_t:=\ChebT_{t-1}(z_0)/\ChebT_{t+1}(z_0)$ are computed without forming
the growing values $\ChebT_t(z_0)$:
\begin{equation}
\label{eq:cheb-coeffs-at-glance}
 a_0=\frac{2}{z_0},\qquad
 a_t=\frac{4}{4z_0-a_{t-1}},\qquad
 c_t=\frac{a_{t-1}a_t}{4}\quad(t\geq1).
\end{equation}
The coefficients do not depend on the operator, and each step uses one
multiplication by it, that is, one communication round. The recurrence
can therefore use the operator of the current round.

\begin{definition}[Chebyshev window]\label{def:chebyshev_window}
For $\bLap=\Lap\otimes I_d$, let
$z_\chi(\bLap):=((1+\chi^{-1})I_{nd}-2\bLap)/(1-\chi^{-1})$. For given window length $s\geq1$ and starting round $k$, initialize
$\boldsymbol v_0=\boldsymbol u\in\R^{nd}$ and compute
\begin{equation}
\label{eq:moving-window-recurrence}
 \boldsymbol v_1
 =z_0^{-1}z_\chi(\bLap_k)\boldsymbol v_0,
 \qquad
 \boldsymbol v_{t+1}
 =a_tz_\chi(\bLap_{k+t})\boldsymbol v_t
   -c_t\boldsymbol v_{t-1},
 \quad t=1,\ldots,s-1.
\end{equation}
We call the resulting map
$\mathcal C_{k,s}:\R^{nd}\to\R^{nd}$,
$\mathcal C_{k,s}(\boldsymbol u):=\boldsymbol v_s$, a Chebyshev window,
and set $\mathcal C_{k,0}(\boldsymbol u):=\boldsymbol u$.
\end{definition}

\begin{proposition}[Chebyshev representation on an unchanged window]
\label{prop:fixed-window-representation}
Suppose that $\bLap_{k+t}=\bLap$ for $t=0,\ldots,s-1$. Then
$\boldsymbol v_t=P_t^\chi(\bLap)\boldsymbol u$ for every
$t=0,\ldots,s$. In particular,
$\mathcal C_{k,s}(\boldsymbol u)=P_s^\chi(\bLap)\boldsymbol u$.
\end{proposition}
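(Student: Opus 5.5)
The plan is induction on $t$, using only the scalar three-term recurrence for the normalized residuals together with functional calculus for the single fixed symmetric matrix $\bLap$. First, I record the scalar identities to be matched. Since $\ChebT_1(x)=x$, we have $P_0^\chi\equiv1$ and $P_1^\chi(\lambda)=z_\chi(\lambda)/z_0$. Dividing $\ChebT_{t+1}(z)=2z\ChebT_t(z)-\ChebT_{t-1}(z)$, with $z=z_\chi(\lambda)$, by $\ChebT_{t+1}(z_0)$ gives
\[
P_{t+1}^\chi(\lambda)=a_t z_\chi(\lambda)P_t^\chi(\lambda)-c_tP_{t-1}^\chi(\lambda),
\qquad
a_t=\frac{2\ChebT_t(z_0)}{\ChebT_{t+1}(z_0)},\quad c_t=\frac{\ChebT_{t-1}(z_0)}{\ChebT_{t+1}(z_0)}.
\]
This is valid because $z_0>1$ implies $\ChebT_j(z_0)\ge1>0$, so no denominator vanishes. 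I also check that the closed-form coefficient recursion \eqref{eq:cheb-coeffs-at-glance} produces exactly these ratios. Writing $r_t:=\ChebT_t(z_0)/\ChebT_{t+1}(z_0)$, we have $a_t=2r_t$ and $r_0=1/z_0$. The Chebyshev recurrence gives $1/r_t=2z_0-r_{t-1}$, which is equivalent to $a_t=4/(4z_0-a_{t-1})$. Finally, $c_t=r_{t-1}r_t=a_{t-1}a_t/4$.

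Next, I lift the identities to matrices. Since $z_\chi(\bLap)=((1+\chi^{-1})I-2\bLap)/(1-\chi^{-1})$ is a polynomial in $\bLap$, substituting the matrix into the polynomial identity above is legitimate: polynomial evaluation at a fixed matrix is an algebra homomorphism. This gives the matrix identities $P_0^\chi(\bLap)=I$, $P_1^\chi(\bLap)=z_0^{-1}z_\chi(\bLap)$, and
\[
P_{t+1}^\chi(\bLap)=a_t z_\chi(\bLap)P_t^\chi(\bLap)-c_tP_{t-1}^\chi(\bLap).
\]

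Then I run the induction. The base cases are immediate: $\boldsymbol v_0=\boldsymbol u=P_0^\chi(\bLap)\boldsymbol u$, and $\boldsymbol v_1=z_0^{-1}z_\chi(\bLap_k)\boldsymbol u=P_1^\chi(\bLap)\boldsymbol u$ because $\bLap_k=\bLap$. For the inductive step with $1\le t\le s-1$, the recurrence \eqref{eq:moving-window-recurrence} uses $\bLap_{k+t}$, and $\bLap_{k+t}=\bLap$ since $t\le s-1$. Assuming the claim for $t$ and $t-1$, I substitute $\boldsymbol v_t=P_t^\chi(\bLap)\boldsymbol u$ and $\boldsymbol v_{t-1}=P_{t-1}^\chi(\bLap)\boldsymbol u$, and the matrix recurrence yields $\boldsymbol v_{t+1}=P_{t+1}^\chi(\bLap)\boldsymbol u$. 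Taking $t=s$ gives $\mathcal C_{k,s}(\boldsymbol u)=P_s^\chi(\bLap)\boldsymbol u$.

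There is no substantive obstacle here. The only points needing care are verifying that the recursively defined $a_t,c_t$ in \eqref{eq:cheb-coeffs-at-glance} coincide with the ratio definitions, and noting that the last step of the window uses only operators with index at most $k+s-1$. Both points are covered by the hypothesis on $t=0,\ldots,s-1$.
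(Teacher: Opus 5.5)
Your proposal is correct and follows the paper's own proof: induction on $t$ with base cases $t=0,1$, the normalized scalar three-term recurrence for $P_t^\chi$, and substitution into \eqref{eq:moving-window-recurrence} with $\bLap_{k+t}=\bLap$. The only addition is that you explicitly check that the recursive coefficient formulas \eqref{eq:cheb-coeffs-at-glance} reproduce the ratios $a_t=2\ChebT_t(z_0)/\ChebT_{t+1}(z_0)$ and $c_t=\ChebT_{t-1}(z_0)/\ChebT_{t+1}(z_0)$, which the paper simply takes from its closed forms.
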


Hence a window of $h$ rounds on a single operator contracts the
disagreement by $\rho_h^\chi=1/\ChebT_h(z_0)$, whether or not it is
interrupted. When the operator changes inside a window, the recurrence is
no longer a polynomial of one matrix because consecutive operators need
not commute, and its deviation from $P_h^\chi$ grows with $h$. \WAVE{}
limits the window length and restarts: the next window starts at $t=0$
from the current state alone, discarding $\boldsymbol v_{s-1}$.

\paragraph{Window lengths.}
Under metric drift, every window has length
\begin{equation}
\label{eq:scale-of-beta}
 \widetilde m_\beta
 :=\min\!\left\{\sqrt\chi,(3\beta\chi)^{-1}\right\},
 \qquad
 m_\beta:=
 \max\!\left\{1,
 \left\lfloor\widetilde m_\beta\right\rfloor\right\},
\end{equation}
where $(3\beta\chi)^{-1}=+\infty$ for $\beta=0$. The second term keeps
the drift error of a window, $O(\beta m^3)$, below its progress, of order
$m^2/\chi$ (Section~\ref{sec:metric-analysis}). For a piecewise-constant
network, $\tau$ is unknown, and the length adapts by doubling: it starts
at one, doubles after every unchanged window up to
$m_{\max}:=\lfloor\sqrt\chi\rfloor$, and resets to one after a detected
change. Long unchanged intervals thus reach the accelerated length, while
frequent changes keep the windows short. The detector reports a change before round $r$ when
$\Lap_r\neq\Lap_{r-1}$, once and before $\Lap_r$ is used. The counter $q$
below records the certified logarithmic contraction: a drift window of
length $m_\beta$ earns $m_\beta^2/(5\chi)$
(Theorem~\ref{thm:metric-window}), and a piecewise window of $h$ executed
rounds earns $\ln\ChebT_h(z_0)$.

\begin{algorithm}[H]
\caption{\WAVE$(\boldsymbol u,k;\delta, \mathcal M)$}
\label{alg:wave}
\KwIn{$\boldsymbol u\in\R^{nd}$, next round $k$, target
$\delta\in(0,1)$, model
$\mathcal M\in\{\mathrm{drift},\mathrm{piecewise}\}$; known $\chi$ and,
for metric drift, $\beta$}
$q\leftarrow0$;
$m\leftarrow m_\beta$ for metric drift, and $m\leftarrow1$ for a
piecewise-constant network\;
\While{$q<\ln(1/\delta)$}{
\eIf{$\mathcal M=\mathrm{drift}$}{
 $(\boldsymbol u,k)\leftarrow(\mathcal C_{k,m}(\boldsymbol u),k+m)$;
 $q\leftarrow q+m^2/(5\chi)$\;
}{
 $k_0\leftarrow k$; execute the window until either $m$ rounds are
 completed or a change is reported before the next round\;
 $h\leftarrow$ number of executed rounds\;
 $(\boldsymbol u,k)\leftarrow
 (\mathcal C_{k_0,h}(\boldsymbol u),k_0+h)$;
 $q\leftarrow q+\ln\ChebT_h(z_0)$\;
 \eIf{a change was reported before round $k$}{
   $m\leftarrow1$\;
 }{
   $m\leftarrow\min\{2m,m_{\max}\}$\;
 }
}
}
\Return $(\boldsymbol u,k)$\;
\end{algorithm}

To state the output guarantee, let
$\proj:=n^{-1}\ones_n\ones_n^\top$,
$\bproj:=\proj\otimes I_d$, and
$\bprojperp:=(I_n-\proj)\otimes I_d$.
The matrices $\bproj$ and $\bprojperp$ are the orthogonal projections onto
the consensus and disagreement subspaces of $\R^{nd}$, respectively.
Every window preserves the consensus component, and the credit of every
window bounds the logarithm of its contraction on the disagreement
subspace. The analysis below shows that,
when the algorithm returns
$(\widehat{\boldsymbol u},k^+)=\WAVE(\boldsymbol u,k;\delta)$, it satisfies
\begin{equation}
\label{eq:wave-call-contract}
 \bproj\widehat{\boldsymbol u}=\bproj\boldsymbol u,
 \qquad
 \norm{\bprojperp\widehat{\boldsymbol u}}
 \leq\delta\norm{\bprojperp\boldsymbol u}.
\end{equation}
Here and below, once the scheduler is fixed, we suppress $\mathcal M$ in
the notation for a \WAVE{} call.
\section{Main results}
\label{subsec:main-consensus-results}
The first theorem covers the full range of metric drift with one
window-length rule and requires no commutativity between successive
operators.

\begin{theorem}[Consensus under metric drift]
\label{thm:main-drift}
Let Assumptions~\ref{ass:uniform-spectrum} and
\ref{ass:metric-drift} hold with $\chi\geq4$. For
$\eps\in(0,1)$, run Algorithm~\ref{alg:wave} on
$(\boldsymbol u_0,0;\eps)$ with $\mathcal M=\mathrm{drift}$ and
$m_\beta$ from \eqref{eq:scale-of-beta}. Then its output
$(\widehat{\boldsymbol u},K)$ is an $\eps$-consensus solution and
\begin{equation}
\label{eq:main-drift-bound}
 K=
 O\left(\left(\sqrt\chi+\min\{\beta\chi^2,\chi\}\right)
 \ln\frac{e}{\eps}\right).
\end{equation}
 Moreover, every completed window reduces
the disagreement norm by a factor at most
$\exp(-m_\beta^2/(5\chi))$.
\end{theorem}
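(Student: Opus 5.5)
The plan is to reduce the theorem to a single per-window contraction estimate: every drift window $\mathcal C_{k,m}$ with $m=m_\beta$ preserves the consensus component and satisfies $\norm{\bprojperp\mathcal C_{k,m}(\boldsymbol u)}\le\exp(-m^2/(5\chi))\norm{\bprojperp\boldsymbol u}$. The theorem then follows by counting rounds.

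\emph{Mean preservation.} Each $z_\chi(\bLap_{k+t})$ acts on $\ones_n\otimes\bar u$ as multiplication by $z_0$, since $\Lap\ones_n=0$. Hence $\bproj\boldsymbol v_1=\bproj\boldsymbol v_0$, and the normalized recurrence satisfies $a_tz_0-c_t=1$ (this is the identity $P_{t+1}^\chi(0)=1$), so $\bproj\boldsymbol v_t$ stays constant by induction. Also every $\bLap_j$ commutes with $\bproj$, so the disagreement $\boldsymbol w_t:=\bprojperp\boldsymbol v_t$ obeys the same recurrence restricted to $\ones_n^\perp$.

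\emph{Variation-of-constants estimate.} Freeze the operator at the window start, $\bLap:=\bLap_k$. Write $z_\chi(\bLap_{k+t})=z_\chi(\bLap)+\Delta_t$ with $\norm{\Delta_t}\le 2t\beta/(1-\chi^{-1})\le 3t\beta$ (using $\chi\ge4$, so $2/(1-\chi^{-1})\le 8/3$). The error $\boldsymbol r_t:=\boldsymbol w_t-P_t^\chi(\bLap)\boldsymbol w_0$ then satisfies the frozen three-term recurrence driven by the source $a_t\Delta_t\boldsymbol w_t$. By variation of constants (discrete Duhamel),
\[
\boldsymbol r_s=\sum_{t=1}^{s-1}G_{s,t}(\bLap)\,a_t\Delta_t\boldsymbol w_t ,
\]
where $G_{s,t}$ is the Green's function of the normalized Chebyshev recurrence. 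It equals $\ChebU_{s-t-1}(z_\chi(\bLap))\,\ChebT_{t+1}(z_0)/\ChebT_{s}(z_0)$, i.e.\ built from second-kind polynomials. On $[\chi^{-1},1]$ we have $|\ChebU_j|\le j+1$, and with $a_t\le 2/z_0\cdot(\text{bounded})$ and $\ChebT_{t+1}(z_0)/\ChebT_s(z_0)\le1$ this gives $\norm{G_{s,t}a_t}\le C(s-t)$. A crude induction bound $\norm{\boldsymbol w_t}\le C'\norm{\boldsymbol w_0}$ for $t\le m\le\sqrt\chi$ is needed here; I would obtain it from a Grönwall-type argument on the same sum, valid because $\beta m^2\chi\lesssim m/\chi\cdot\ldots$ stays small when $m\le(3\beta\chi)^{-1}$. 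Combining, $\norm{\boldsymbol r_m}\le c\,\beta m^3\norm{\boldsymbol w_0}$.

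\emph{Window contraction.} Lemma~\ref{lem:window} applies since $m^2\le\chi\le2\chi$, so $\norm{P_m^\chi(\bLap)\boldsymbol w_0}\le e^{-4m^2/(5\chi)}\norm{\boldsymbol w_0}$. Because $m\le(3\beta\chi)^{-1}$, we get $\beta m^3\le m^2/(3\chi)$. The remaining task is to fit the constants so that $e^{-4m^2/(5\chi)}+c\beta m^3\le e^{-m^2/(5\chi)}$. Using $m^2/\chi\le1$ and the convexity bound $e^{-x/5}-e^{-4x/5}\ge x\cdot(\text{const}\approx0.45\cdot 0.6)$ on $[0,1]$, this requires the drift constant to satisfy $c/3\lesssim0.27$. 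Getting this constant small enough is the main obstacle. The sharp Green's function bounds (exact $\ChebU$ growth, and bounding $\sum_t t(s-t)\le s^3/6$) should supply the needed factor. If $m_\beta=1$ because $\widetilde m_\beta<1$, the window is a single gossip step with $\norm{z_\chi(\bLap_k)/z_0}$ on $\ones^\perp$ at most $(1-\chi^{-1})/(1+\chi^{-1})\le e^{-1/\chi}$, which holds with no drift error.

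\emph{Counting.} By the contraction estimate the algorithm stops after $N=\lceil 5\chi\ln(1/\eps)/m^2\rceil$ windows, giving $K=Nm\le 5\chi\ln(1/\eps)/m+m$. Since $m\ge\widetilde m_\beta/2$ whenever $\widetilde m_\beta\ge1$, we have $\chi/m=O(\sqrt\chi+\beta\chi^2)$, while $m=1$ gives $O(\chi)$. Therefore $\chi/m=O(\sqrt\chi+\min\{\beta\chi^2,\chi\})$. Absorbing the additive $m\le\sqrt\chi$ into $\ln(e/\eps)\ge1$ yields \eqref{eq:main-drift-bound}, and the $\eps$-consensus property follows from $\bproj\widehat{\boldsymbol u}=\ones_n\otimes\bar u_0$.
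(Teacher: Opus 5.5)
Your plan follows the paper's proof almost step by step. You freeze $\bLap=\bLap_k$, drive the frozen recurrence by $a_t\Delta_t\boldsymbol w_t$, and propagate with $\frac{\ChebT_{t+1}(z_0)}{\ChebT_s(z_0)}\ChebU_{s-t-1}(z_\chi(\bLap))$, which is exactly the paper's response operator with $j=t+1$. You then use $|\ChebU_r|\le r+1$ and $\sum_t t(s-t)=(s^3-s)/6$, compare with Lemma~\ref{lem:window}, and count windows.

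\textbf{The gap.} The step you leave open is the constant fitting, and it is not cosmetic. Algorithm~\ref{alg:wave} credits exactly $m^2/(5\chi)$ per window, so the certificate $\exp(-m^2/(5\chi))$ must hold in the extreme admissible case $h:=m^2/\chi=1$, $\beta m\chi=1/3$ (e.g.\ $\beta=(3\chi^{3/2})^{-1}$ with $\chi$ a perfect square). There the margin is thin, and the constants you wrote down do not close:
\begin{itemize}
\item Bounding $\norm{\Delta_t}\le 3t\beta$ separately, together with $a_t\le 2/z_0$, gives a drift error $z_0^{-1}\beta(m^3-m)M_m$, where $M_m:=\max_{t\le m}\norm{\boldsymbol w_t}$. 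For large $\chi$ this is essentially $\frac h3 M_m$.
\item Converting $M_m$ back to $\norm{\boldsymbol w_0}$ by the natural bootstrap $M_m\le\norm{\boldsymbol w_0}/(1-D_m)$ gives $\frac{h/3}{1-h/3}=\frac12$ at $h=1$. This exceeds $e^{-1/5}-e^{-4/5}\approx0.369$, so the required inequality fails.
\item Your slope $0.27$ for $e^{-x/5}-e^{-4x/5}$ is also too weak. Even the sharp bookkeeping below needs a slope above $2/7\approx0.286$.
\item The ``crude induction bound via Gr\"onwall'' is not an argument as written; the smallness condition you state for it is garbled.
\end{itemize}

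\textbf{How to close it.} Keep the factors together. Since $a_t\ChebT_{t+1}(z_0)=2\ChebT_t(z_0)$ and $\ChebT_t(z_0)/\ChebT_s(z_0)\le z_0^{-1}$ for $t<s$, the $t$th term is at most
\[
\frac{2}{z_0}\cdot\frac{2}{1-\chi^{-1}}\,\beta t(s-t)M_s=\frac{4}{1+\chi^{-1}}\,\beta t(s-t)M_s ,
\]
using $(1-\chi^{-1})z_0=1+\chi^{-1}$. Summing gives $\norm{\boldsymbol w_s-P_s^\chi(\bLap)\boldsymbol w_0}\le D_sM_s$ with $D_s=\frac23\beta(s^3-s)$.

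Replace Gr\"onwall by the bootstrap. Since $\norm{P_s^\chi(\bLap)}\le1$ on the disagreement subspace and $D_s\le D_m$, you get $M_m\le\norm{\boldsymbol w_0}+D_mM_m$, hence $M_m\le\norm{\boldsymbol w_0}/(1-D_m)$. Here $D_m\le\frac23(\beta m\chi)h\le\frac{2h}{9}\le\frac29$. Therefore
\[
\norm{\boldsymbol w_m}\le\Bigl(e^{-4h/5}+\frac{2h/9}{1-2h/9}\Bigr)\norm{\boldsymbol w_0}\le\Bigl(e^{-4h/5}+\frac{2h}{7}\Bigr)\norm{\boldsymbol w_0}.
\]
It remains to show $e^{-h/5}-e^{-4h/5}\ge\frac{3h}{10}>\frac{2h}{7}$ on $(0,1]$. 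This follows from $A:=e^{-h/5}\ge\frac45$ and $A\ge e^{-4h/5}\bigl(1+\frac{3h}{5}\bigr)$. Alternatively, concavity on $[0,1]$ gives slope $e^{-1/5}-e^{-4/5}$, which also suffices.

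\textbf{Other steps.} Your mean-preservation, $m_\beta=1$, and counting steps are fine. In the counting, say explicitly:
\begin{itemize}
\item $\widetilde m_\beta\ge1$ forces $\beta\chi\le\frac13$, so $\beta\chi^2$ is the minimum in $\min\{\beta\chi^2,\chi\}$.
\item $m_\beta=1$ forces $(3\beta\chi)^{-1}<2$, i.e.\ $\beta\chi^2>\chi/6$, so the degree-one cost $O(\chi)$ is $O(\min\{\beta\chi^2,\chi\})$.
\end{itemize}
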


For small drift, long windows retain Chebyshev acceleration. As $\beta$
increases, the window shortens and the additional term grows as
$\beta\chi^2$ until it reaches the robust $O(\chi)$ scale, where
$m_\beta=1$ and the recurrence becomes its degree-one endpoint.

\begin{corollary}[Fixed-network rate under small drift]
\label{cor:small-drift}
Under the assumptions of Theorem~\ref{thm:main-drift}, if
$\beta\leq(3\chi^{3/2})^{-1}$, then
$m_\beta=\lfloor\sqrt\chi\rfloor$ and
$ K=O\!\left(\sqrt\chi\ln\frac{e}{\eps}\right).$
This includes the fixed-network case $\beta=0$.
\end{corollary}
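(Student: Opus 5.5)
The corollary follows by specializing Theorem~\ref{thm:main-drift}. We first identify the window length, then simplify the complexity bound.

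\textbf{Window length.} Assume $\beta\le(3\chi^{3/2})^{-1}$ and $\beta>0$. Then
\[
(3\beta\chi)^{-1}\ge\chi^{3/2}/\chi=\sqrt\chi .
\]
For $\beta=0$ the second term is $+\infty$ by convention. In both cases $\widetilde m_\beta=\min\{\sqrt\chi,(3\beta\chi)^{-1}\}=\sqrt\chi$ in \eqref{eq:scale-of-beta}. Since $\chi\ge4$, we have $\lfloor\sqrt\chi\rfloor\ge2\ge1$, so the outer maximum is attained by the floor term. Hence $m_\beta=\lfloor\sqrt\chi\rfloor$.

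\textbf{Complexity.} Apply the bound \eqref{eq:main-drift-bound}. The assumption on $\beta$ gives
\[
\min\{\beta\chi^2,\chi\}\le\beta\chi^2\le\chi^{2}/(3\chi^{3/2})=\sqrt\chi/3 .
\]
The factor $\sqrt\chi+\min\{\beta\chi^2,\chi\}$ is therefore at most $\tfrac43\sqrt\chi$. This yields $K=O(\sqrt\chi\ln(e/\eps))$.

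As a sanity check, this can also be seen directly from Algorithm~\ref{alg:wave}, without reading off the theorem's constant. Each window has length $m=\lfloor\sqrt\chi\rfloor\ge\sqrt\chi/2$ (using $\sqrt\chi\ge2$) and adds $m^2/(5\chi)\ge1/20$ to $q$. The loop therefore stops after at most $\lceil 20\ln(1/\eps)\rceil$ windows, for a total of at most $\sqrt\chi(20\ln(1/\eps)+1)=O(\sqrt\chi\ln(e/\eps))$ rounds.

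The output is an $\eps$-consensus solution by Theorem~\ref{thm:main-drift}. In the case $\beta=0$, the assumption $\beta\le(3\chi^{3/2})^{-1}$ holds trivially, so the fixed-network case is included.

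There is no real obstacle here. The only points needing care are the $\beta=0$ convention, $(3\beta\chi)^{-1}=+\infty$, and the use of $\chi\ge4$ to guarantee that $\lfloor\sqrt\chi\rfloor\ge1$.
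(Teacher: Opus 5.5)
Your proof is correct and follows the paper's argument. The paper likewise observes that the first term of the minimum defining $\widetilde m_\beta$ is active, so $m_\beta=\lfloor\sqrt\chi\rfloor\ge\sqrt\chi/2$. It then reads off $K\le(5\chi/m_\beta)\ln(1/\eps)+m_\beta\le10\sqrt\chi\ln(1/\eps)+\sqrt\chi$ from the explicit estimate of Theorem~\ref{thm:metric-window}. That is your window-counting ``sanity check'', where your constant $20$ comes from a looser bound on $\lceil 5\chi\ln(1/\eps)/m^2\rceil\, m$; your main route through the $O$-bound of Theorem~\ref{thm:main-drift} with $\beta\chi^2\le\sqrt\chi/3$ is equally valid.
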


The corollary still allows the operator to change at every round; only
the norm of each change is restricted. The constant $3$ comes from the
explicit stability estimate of Lemma~\ref{lem:moving-window-stability},
and the proof gives $K\le10\sqrt\chi\ln(1/\eps)+\sqrt\chi$ in this regime.
The second theorem replaces control of the jump size by control of the
time between jumps. The detector restarts the recurrence at a change, and
the doubling rule increases the attempted window during each unchanged
interval without using $\tau$.

\begin{theorem}[Consensus under piecewise-constant changes]
\label{thm:dwell-det}
Let Assumptions~\ref{ass:uniform-spectrum} and
\ref{ass:dwell-time} hold with $\chi\geq4$, and assume access to the
ideal change detector. For $\eps\in(0,1)$, run
Algorithm~\ref{alg:wave} on $(\boldsymbol u_0,0;\eps)$ with
$\mathcal M=\mathrm{piecewise}$. Then its output
$(\widehat{\boldsymbol u},K)$ is an $\eps$-consensus solution and
\begin{equation}
\label{eq:dwell-det-bound}
 K=O\left(\left(\sqrt\chi+\frac{\chi}{\tau}\right)
 \ln\frac{e}{\eps}\right).
\end{equation}
\end{theorem}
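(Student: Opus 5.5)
We prove correctness and the round count separately.

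\emph{Correctness.} Every executed piecewise window, whether completed or cut by a detected change, runs on a single operator. The detector reports a change before the new operator is used, so the window stops exactly at the boundary. Proposition~\ref{prop:fixed-window-representation} then gives $\mathcal C_{k_0,h}(\boldsymbol u)=P_h^\chi(\bLap)\boldsymbol u$. Since $P_h^\chi(0)=1$ and $\bLap$ commutes with $\bproj$ and $\bprojperp$, the window preserves $\bproj\boldsymbol u$. By Proposition~\ref{prop:cheb-minimax} and Assumption~\ref{ass:uniform-spectrum}, it contracts $\norm{\bprojperp\boldsymbol u}$ by at least $\rho_h^\chi=1/\ChebT_h(z_0)$. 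The credit $\ln\ChebT_h(z_0)$ is therefore exactly certified. Multiplying over windows gives \eqref{eq:wave-call-contract} with $\delta=\eps$ once $q\ge\ln(1/\eps)$, which is the $\eps$-consensus claim.

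\emph{Counting.} We partition the rounds into maximal unchanged intervals $[t_i,t_{i+1})$, each of length $\ell_i\ge\tau$; the last interval may be truncated at termination. Within one interval the attempted lengths are $1,2,4,\dots$ up to $m_{\max}$ and then constant, and only the final window of the interval can be cut. We use the credit per executed round. By Lemma~\ref{lem:window}, a window of length $h\le\sqrt\chi$ (so $h^2\le 2\chi$) earns $\ln\ChebT_h(z_0)\ge 4h^2/(5\chi)$, which is $\ge c\,h/\sqrt\chi$ when $h=m_{\max}$. The key estimate is that an interval of length $\ell$ earns credit at least $c\min\{\ell^2/\chi,\ \ell/\sqrt\chi\}$ minus a bounded loss.
\begin{itemize}
\item In the doubling phase, the completed windows have lengths $1,2,\dots,2^{j-1}$ and total length $2^j-1$. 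The last one, of length $2^{j-1}$, alone earns $\gtrsim 4^{j}/\chi$, which is a constant fraction of $(\text{rounds used})^2/\chi$.
\item The cut window at the end costs at most as many rounds as all completed windows before it combined, plus one. It therefore at most doubles the round count, up to an additive $1$ per interval.
\item Once $m_{\max}$ is reached, each full window of length $m_{\max}$ earns $\ge c\,m_{\max}/\sqrt\chi$.
\end{itemize}
So an interval of length $\ell$ earns $\gtrsim\min\{\ell^2/\chi,\ell/\sqrt\chi\}$ whenever $\ell\ge 2$. Very short intervals earn at least the degree-one credit $\ln\ChebT_1(z_0)=\ln z_0\ge 2/\chi$, since $z_0=1+2/(\chi-1)$.

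Since every non-final interval has $\ell_i\ge\tau$ and the credit per round $g(\ell)=\min\{\ell/\chi,1/\sqrt\chi\}$ is nondecreasing in $\ell$, every round in a completed interval earns at least $c\,g(\tau)\ge c\,(\sqrt\chi+\chi/\tau)^{-1}/2$ on average. We then check that the algorithm stops once accumulated credit reaches $\ln(1/\eps)$. The credits sum over intervals, and every window earns positive credit. Hence the number of rounds in completed intervals is $O((\sqrt\chi+\chi/\tau)\ln(1/\eps))$.

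The last, possibly partial, interval is handled directly. Either it already reaches the target within that bound, or it is long enough that the same per-round estimate applies, with an additive $O(\sqrt\chi)$ overhead absorbed into $\ln(e/\eps)$. An overshoot of the final window adds at most $m_{\max}\le\sqrt\chi$ rounds. Together these give \eqref{eq:dwell-det-bound}.

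\emph{Main obstacle.} The hardest step is the per-interval accounting. We must show that the doubling restarts and the wasted cut windows lose only a constant factor relative to the ideal credit $\min\{\ell^2/\chi,\ell/\sqrt\chi\}$. We must also handle the lower bound on $\ln\ChebT_h(z_0)$ for $h$ between $\sqrt\chi/2$ and $m_{\max}$, where Lemma~\ref{lem:window} still applies because $h^2\le\chi\le2\chi$. The plan is to bound each interval's rounds by $4\times$ the length of its longest completed window plus the $m_{\max}$-phase rounds plus one. Then for each case of $\ell$ versus $\sqrt\chi$, we compare against that window's credit.
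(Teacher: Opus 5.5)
Your proposal is correct and follows essentially the same route as the paper. Each single-operator window, complete or cut, is certified by $\ln\ChebT_h(z_0)$. An unchanged interval of length $r$ earns order $r\min\{r,m_{\max}\}/\chi$, from its longest completed dyadic window or from $\gtrsim r/m_{\max}$ full-length windows, and monotonicity in $r$ with $r\ge\tau$ then gives the per-round rate. The paper packages this as Lemma~\ref{lem:dwell-shifted-start} for arbitrary call starts, where each of the two boundary pieces costs at most $4m_{\max}$ rounds; for the start at round $0$ this is your argument, with your final partial interval handled as the same additive $O(m_{\max})$ overhead.
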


The term $\chi/\tau$ is the amortized cost of restarts: for
$\tau\geq\sqrt\chi$ the bound is $O(\sqrt\chi\ln(e/\eps))$, and $\tau=1$
gives the robust $O(\chi\ln(e/\eps))$ dependence. The same bound holds in
expectation when the change times are memoryless with mean spacing
$\bar\tau$ (Theorem~\ref{thm:dwell-rand}). Together, the results give two
routes to accelerated consensus on changing networks: changes may be
frequent but small, or abrupt but separated. Neither requires a connected
subgraph that persists throughout the run.
The bounds also quantify partial acceleration between the fully accelerated
and worst-case endpoints. For example, $\beta=\Theta(\chi^{-5/4})$
in the metric-drift model or $\tau=\Theta(\chi^{1/4})$ in the
piecewise-constant model gives $O(\chi^{3/4}\ln(e/\eps))$ rounds.

\section{Metric-drift network}
\label{sec:metric-analysis}

We compare each moving \WAVE{} window with a fixed reference operator.
For a window beginning at round $k$, denote the sum drift of Laplacian matrix as
$\boldsymbol{E}_t:=\bLap_{k+t}-\bLap_{k}$, so that $\norm{\boldsymbol{E}_t}\le\beta t$ under metric drift (Assumption \ref{ass:metric-drift}). The recurrence uses the actual
operators:
\begin{equation}\label{eq:local-window-recurrence}
    \boldsymbol e_1=\frac{z_\chi(\bLap_{k})}{z_0} \boldsymbol e_0,
    \qquad
    \boldsymbol e_{t+1}=a_tz_\chi(\bLap_{k+t})\boldsymbol e_t-c_t\boldsymbol e_{t-1},
    \quad t=1,\ldots,m-1.
\end{equation}
If every $\boldsymbol E_t$ vanishes, then $\boldsymbol e_m=P_m^\chi(\bLap_{k}) \boldsymbol e_0$ by
Proposition~\ref{prop:fixed-window-representation}. To quantify the effect of changing
operators, let $\ChebU_r$ denote the Chebyshev polynomial of the second
kind, defined by $\ChebU_0(x)=1$, $\ChebU_1(x)=2x$, and
$\ChebU_{r+1}(x)=2x\ChebU_r(x)-\ChebU_{r-1}(x)$. For $1\leq t\leq m$,
the exact variation-of-constants identity is
\[
 \boldsymbol e_t=P_t^\chi(\bLap_{k})\boldsymbol e_0-
 \frac{4}{1-\chi^{-1}}\sum_{j=2}^t
 \frac{\ChebT_{j-1}(z_0)}{\ChebT_t(z_0)}
 \ChebU_{t-j}\!\left(z_\chi(\bLap_{k})\right)\boldsymbol E_{j-1}\boldsymbol e_{j-1}.
\]
Each perturbation acts on the actual iterate $e_{j-1}$ and is then
propagated by a polynomial of the reference operator. The matrix order
is preserved, so no commutativity is needed. On $(\ones_n\otimes \boldsymbol I_d)^\perp$,
$\norm{\ChebU_{t-j}(z_\chi(\bLap_{k}))}\leq t-j+1$ and
$\norm{\boldsymbol E_{j-1}}\leq\beta(j-1)$. Summing these responses produces the
cubic short-window term below. The normalization ratios give the second,
geometrically damped bound. Appendix~\ref{app:metric-proofs} proves the
identity and both estimates.
\label{subsec:moving-recurrence}\label{subsec:variation-of-constants}

\begin{lemma}[Stability of a moving full-spectrum window]
\label{lem:moving-window-stability}
Let Assumptions~\ref{ass:uniform-spectrum} and
\ref{ass:metric-drift} hold. Let a window satisfy
\eqref{eq:local-window-recurrence} and set
$    M_t:=\max_{0\le s\le t}\norm{\boldsymbol e_s},
    D_t:=\min\left\{\frac23\beta(t^3-t),2\beta t\chi\right\}.$
Then $\norm{\boldsymbol e_t-P_t^\chi(\bLap_{k})\boldsymbol e_0}\le D_tM_t$
for every $0\le t\le m$.
Consequently, if $D_m<1$, then
\begin{equation}\label{eq:scalar-window-contraction}
 \norm{\boldsymbol e_m}
 \le
 \left(\rho_m^\chi+\frac{D_m}{1-D_m}\right)\norm{\boldsymbol e_0}.
\end{equation}
\end{lemma}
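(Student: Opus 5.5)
Our plan is to first establish the variation-of-constants identity displayed just before the lemma, and then bound its sum in two ways. To prove the identity, we write $Z:=z_\chi(\bLap_k)$ and $\gamma:=2/(1-\chi^{-1})$, so that $z_\chi(\bLap_{k+t})=Z-\gamma\boldsymbol E_t$. We rescale by setting $\boldsymbol f_t:=\ChebT_t(z_0)\boldsymbol e_t$. Since $a_t=2\ChebT_t(z_0)/\ChebT_{t+1}(z_0)$ and $c_t=\ChebT_{t-1}(z_0)/\ChebT_{t+1}(z_0)$, the recurrence \eqref{eq:local-window-recurrence} becomes
\[
\boldsymbol f_{t+1}=2Z\boldsymbol f_t-\boldsymbol f_{t-1}-2\gamma\boldsymbol E_t\boldsymbol f_t ,
\]
with $\boldsymbol f_0=\boldsymbol e_0$ and $\boldsymbol f_1=Z\boldsymbol e_0$ (note $\boldsymbol E_0=0$). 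The homogeneous part is solved by $\ChebT_t(Z)\boldsymbol e_0$. The response to a forcing $\boldsymbol g$ injected at step $j$ is $\ChebU_{t-j}(Z)\boldsymbol g$, because $\ChebU$ satisfies the same three-term recurrence and has the starting values $\ChebU_{-1}=0$, $\ChebU_0=1$. We will verify by induction on $t$ that
\[
\boldsymbol f_t=\ChebT_t(Z)\boldsymbol e_0-2\gamma\sum_{j=2}^{t}\ChebU_{t-j}(Z)\boldsymbol E_{j-1}\boldsymbol f_{j-1}.
\]
Throughout, the matrix order is kept, with $\ChebU(Z)$ acting on the left of $\boldsymbol E\boldsymbol f$, so no commutativity is used. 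Dividing by $\ChebT_t(z_0)$ gives the identity with prefactor $2\gamma=4/(1-\chi^{-1})$.

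For the first estimate we work on $\ones^\perp$. All $\boldsymbol e_s$ lie there, because each window preserves the mean. Every $\boldsymbol E_j$ maps $\ones^\perp$ into itself, since all the $\Lap$'s are symmetric with kernel $\ones$. By Assumption~\ref{ass:uniform-spectrum} we have $\spec(Z|_{\ones^\perp})\subseteq[-1,1]$, hence $\norm{\ChebU_r(Z)}\le r+1$. We also have $\ChebT_{j-1}(z_0)/\ChebT_t(z_0)\le1$ and $\norm{\boldsymbol E_{j-1}}\le\beta(j-1)$. Together these bound the perturbation by
\[
\frac{4\beta}{1-\chi^{-1}}\,M_t\sum_{j=2}^t (t-j+1)(j-1)=\frac{4\beta}{1-\chi^{-1}}\cdot\frac{t^3-t}{6}\,M_t .
\]
This gives $\frac23\beta(t^3-t)M_t/(1-\chi^{-1})$. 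The factor $(1-\chi^{-1})^{-1}$ must be absorbed, and we expect this to be the main obstacle in matching the stated constant $\tfrac23$ exactly. We would try first to sharpen the ratio $\ChebT_{j-1}(z_0)/\ChebT_t(z_0)\le z_0^{-(t-j+1)}$ and use $z_0=(1+\chi^{-1})/(1-\chi^{-1})$. The point is that $(1-\chi^{-1})^{-1}z_0^{-1}=1/(1+\chi^{-1})\le1$ whenever $j\le t$.

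For the second estimate we use geometric damping. Since $z_0=\cosh\theta_\chi$, we have $\ChebT_{j-1}(z_0)/\ChebT_t(z_0)\lesssim e^{-(t-j+1)\theta_\chi}$. We then bound $\sum_{r\ge1}r e^{-r\theta}$, with $r=t-j+1$, using $\norm{\boldsymbol E_{j-1}}\le\beta t$. Because $\theta_\chi\approx2/\sqrt\chi$, this sum is $O(\chi)$. A careful version is $\sum_r r\,\operatorname{sech}$-type ratios $\le\chi/2\cdot(1-\chi^{-1})$, which yields $2\beta t\chi$. We would check this computation with the exact identity $\ChebT_{j-1}(z_0)\ChebU_{r}(\cdot)$ relative to $\ChebT_t(z_0)$, which may require bounding $\sum_{r}(r)\cosh((t-r)\theta)/\cosh(t\theta)$.

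Finally, the consequence follows from a bootstrap. For every $s\le m$, $\norm{\boldsymbol e_s}\le\rho_s^\chi\norm{\boldsymbol e_0}+D_sM_s\le\norm{\boldsymbol e_0}+D_mM_m$, using $|P_s^\chi|\le1$ on the spectrum and the monotonicity of $D$. Hence $M_m\le\norm{\boldsymbol e_0}/(1-D_m)$. Then
\[
\norm{\boldsymbol e_m}\le\rho_m^\chi\norm{\boldsymbol e_0}+\frac{D_m}{1-D_m}\norm{\boldsymbol e_0},
\]
using Proposition~\ref{prop:cheb-minimax} for $\norm{P_m^\chi(\bLap_k)\boldsymbol e_0}\le\rho_m^\chi\norm{\boldsymbol e_0}$.
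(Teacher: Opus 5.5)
Your proposal is correct and follows essentially the same route as the paper: the rescaling $\ChebT_t(z_0)\boldsymbol e_t$, the order-preserving superposition with responses $\ChebU_{t-j}(z_\chi(\bLap_k))$, the short-window branch via $\ChebT_{j-1}(z_0)/\ChebT_t(z_0)\le z_0^{-1}$ and $(1-\chi^{-1})z_0=1+\chi^{-1}$, and the same bootstrap $M_m\le\norm{\boldsymbol e_0}+D_mM_m$. For the damped branch, the constant you guessed is exactly what the paper gets from $\cosh((j-1)\theta_\chi)/\cosh(t\theta_\chi)\le 2e^{-(t-j+1)\theta_\chi}$ together with $2\sum_{r\ge1}re^{-r\theta_\chi}=1/(2\sinh^2(\theta_\chi/2))=(\chi-1)/2$, so only this routine computation needs to be written out.
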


The two terms in $D_t$ are the short-window and geometrically damped
response bounds, and $D_1=0$ because a single step uses a single operator.
For $m\leq\sqrt\chi$, the fixed-reference progress is of order $m^2/\chi$,
while metric drift contributes $O(\beta m^3)$. Requiring the perturbation
to remain below the reference progress gives $\beta m\chi=O(1)$ and
explains the choice $m_\beta\asymp\min\{\sqrt\chi,(\beta\chi)^{-1}\}$ in
\eqref{eq:scale-of-beta}.

\begin{theorem}[Uniform certificate for a full-spectrum window]
\label{thm:metric-window}
Let Assumptions~\ref{ass:uniform-spectrum} and
\ref{ass:metric-drift} hold with $\chi\ge4$. If $m=1$, no restriction
on $\beta$ is required.  If $m\ge2$, suppose
$m^2/\chi\le1$ and $\beta m\chi\le1/3$.
Then every complete Chebyshev window satisfies
\begin{equation}\label{eq:window-guarantee}
    \norm{\boldsymbol e_{m}}
    \le
    \exp\left(-\frac{m^2}{5\chi}\right)
    \norm{\boldsymbol e_{0}}.
\end{equation}
Hence, for every $\eps\in(0,1)$, repeated complete windows reach
$\eps$-consensus within
$K(\eps)\le(5\chi/m)\ln(1/\eps)+m$ communication rounds.
\end{theorem}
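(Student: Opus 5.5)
The plan is to combine Lemma~\ref{lem:moving-window-stability} with Lemma~\ref{lem:window}, and then iterate over windows.

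\textbf{Case $m=1$.} Here $D_1=0$, so $\boldsymbol e_1=P_1^\chi(\bLap_k)\boldsymbol e_0$ exactly, and $\norm{\boldsymbol e_1}\le\rho_1^\chi\norm{\boldsymbol e_0}$. We need $\operatorname{sech}(\theta_\chi)\le\exp(-1/(5\chi))$. Lemma~\ref{lem:window} applies with $m=1$ because $1\le2\chi$, and it gives the stronger bound $\exp(-4/(5\chi))$. This case uses no drift assumption at all.

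\textbf{Case $m\ge2$.} We use the short-window term $D_m\le\frac23\beta(m^3-m)<\frac23\beta m^3$. Writing $\beta m^3=(\beta m\chi)\,m^2/\chi$ and using $\beta m\chi\le1/3$ gives $D_m\le\frac{2}{9}\,m^2/\chi=:\frac29 x$, where $x:=m^2/\chi\le1$. Since $x\le1$ we have $D_m\le2/9<1$, so \eqref{eq:scalar-window-contraction} applies. It yields
\[
\norm{\boldsymbol e_m}\le\Bigl(e^{-4x/5}+\tfrac{2x/9}{1-2x/9}\Bigr)\norm{\boldsymbol e_0}\le\Bigl(e^{-4x/5}+\tfrac{2x}{7}\Bigr)\norm{\boldsymbol e_0},
\]
where the first term comes from Lemma~\ref{lem:window}, valid since $m^2\le\chi\le2\chi$. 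It remains to verify the scalar inequality
\[
f(x):=e^{-4x/5}+\tfrac{2}{7}x\le e^{-x/5}\qquad\text{for }x\in(0,1].
\]
\textbf{This is the main obstacle:} the margin between the two exponents must absorb the drift term. Set $g(x)=e^{-x/5}-e^{-4x/5}-\frac27x$. Then $g(0)=0$, $g'(0)=-\frac15+\frac45-\frac27=\frac35-\frac27>0$, and $g$ is concave on $[0,1]$ because $g''=\frac1{25}e^{-x/5}-\frac{16}{25}e^{-4x/5}<0$ there (this needs $e^{3x/5}<16$). Concavity with $g(0)=0$ reduces the claim to checking $g(1)\ge0$. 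Numerically $g(1)=e^{-0.2}-e^{-0.8}-2/7\approx0.8187-0.4493-0.2857\approx0.084>0$.

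If the exact constants were tighter than this, the fallback would be to use $D_m\le\frac23\beta m^3$ together with the sharper $\rho_m^\chi$ bound from the $\operatorname{sech}$ formula in Proposition~\ref{prop:cheb-minimax}. Here, however, the margin is comfortable. One caveat: the lemma's bound is stated in terms of $\norm{\boldsymbol e_0}$, so I would apply it on the disagreement subspace. Windows preserve $\bproj$ because every $\bLap_j$ annihilates consensus vectors and the coefficients satisfy $P_t^\chi(0)=1$, so $\boldsymbol e_t$ stays in $\ones^\perp$ throughout.

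\textbf{Round count.} Chain the windows. After $N$ complete windows, $\norm{\boldsymbol e}\le e^{-Nm^2/(5\chi)}\norm{\boldsymbol e_0}$, which is at most $\eps\norm{\boldsymbol e_0}$ once
\[
N=\Bigl\lceil 5\chi\ln(1/\eps)/m^2\Bigr\rceil .
\]
The number of rounds is then $Nm\le(5\chi/m)\ln(1/\eps)+m$, as claimed.
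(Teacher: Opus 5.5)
Your proposal is correct and follows the paper's proof essentially step for step. It uses the same estimate $D_m\le\frac29\,m^2/\chi$, the same combination of \eqref{eq:scalar-window-contraction} with Lemma~\ref{lem:window} giving the factor $e^{-4h/5}+2h/7$ with $h=m^2/\chi$, and the same count of $\lceil(5\chi/m^2)\ln(1/\eps)\rceil$ windows.

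The differences are cosmetic. The paper treats $m=1$ through the explicit value $\max_{[\chi^{-1},1]}|P_1^\chi|=(\chi-1)/(\chi+1)\le e^{-2/\chi}$ rather than Lemma~\ref{lem:window}. It closes the scalar inequality via $e^{-h/5}-e^{-4h/5}\ge 3h/10>2h/7$ instead of your equally valid concavity check. One small fix: write $\le$ rather than $<$ in $D_m<\frac23\beta m^3$, so that $\beta=0$ is covered.
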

\label{subsec:short-window-analysis}

The main idea behind the proof of Theorem~\ref{thm:metric-window} is to compare the contraction of an unchanged Chebyshev
window with the perturbation accumulated while the operator moves. For
$m\leq\sqrt\chi$, these quantities scale respectively as $m^2/\chi$ and
$\beta m^3$, so the condition $\beta m\chi\leq1/3$ keeps the perturbation
below the useful progress and yields \eqref{eq:window-guarantee}. When
$m=1$, the window uses only one operator and therefore needs no drift
condition. The complete argument, including the constants, is given in
Appendix~\ref{app:metric-proofs}. Any shorter integer window, for example
one obtained by replacing the constant $3$ in \eqref{eq:scale-of-beta} by
a larger one, satisfies the same hypotheses.

\paragraph{Choosing the window length.}
\label{subsec:choose-window}
If $m_\beta\ge2$, then $m_\beta\le\sqrt\chi$ and $\beta m_\beta\chi\le1/3$,
so Theorem~\ref{thm:metric-window} applies; if $m_\beta=1$, its
degree-one part applies without a drift condition. When
$\widetilde m_\beta\ge2$, the rounding inequality
$m_\beta\ge\widetilde m_\beta/2$ gives
$\chi/m_\beta\le2\max\{\sqrt\chi,3\beta\chi^2\}$. When
$\widetilde m_\beta<2$, the degree-one cost is $O(\chi\ln(e/\eps))$ and
$\beta\chi^2>\chi/6$. This proves Theorem~\ref{thm:main-drift} with one
recurrence and one length formula; the transition from acceleration to
the robust endpoint is caused only by the integer value of $m$.

\section{Piecewise-constant networks}
\label{sec:dwell}

At a detected change, Algorithm~\ref{alg:wave} keeps the state reached
by the current window and starts a new length-one attempt before using the
new operator. The jump magnitude therefore does not enter the contraction
estimate: successive graphs may have entirely different edge sets, provided
each operator satisfies the common spectral bound. Every window runs on
a single operator and, by
Propositions~\ref{prop:fixed-window-representation} and~\ref{prop:cheb-minimax}, a window
of $h\le m_{\max}$ rounds, complete or interrupted, earns
$\ln\ChebT_h(z_0)\ge4h^2/(5\chi)$ by Lemma~\ref{lem:window}.
A change restarts the build-up of acceleration, but preserves the progress
already made. The term $\chi/\tau$ accounts for this restart cost over
unchanged intervals. In particular, attaining the fixed-network rate only
requires each interval to support a constant-factor reduction, rather than
an entire run to the final accuracy $\eps$.

\subsection{Minimum spacing between changes}
\label{subsec:deterministic-dwell}

Consider $r$ consecutive rounds with a fixed operator following a
length-one start. If the largest completed window has length $m<m_{\max}$,
then $r<4m$: completed windows use fewer than $2m$ rounds, and the
next attempt has length at most $2m$. If $r\ge4m_{\max}$, the shorter
windows use fewer than $2m_{\max}$ rounds, leaving time for at least
$r/(5m_{\max})$ completed windows of length $m_{\max}$. These two
cases give at least $r\min\{r,m_{\max}\}/(20\chi)$ of certified
logarithmic reduction in the disagreement norm.
On an interval between changes, $r\ge\tau$, so the certified reduction is
at least $1/(40(\sqrt\chi+\chi/\tau))$ per round; calls that begin or end
inside an interval are handled in Appendix~\ref{app:dwell-proofs}.

\subsection{Detecting changes inside the network}
\label{subsec:flooding}

The ideal detector can be replaced by local observation and flooding.
For this extension, assume synchronous neighbor exchanges, that every
agent knows $n\ge2$ and $\chi$, and that it observes its current row of
$\Lap_k$ before round $k$. A message may carry a round index alongside
the usual vector. Set
$D_n:=\lfloor(\sqrt\chi/2)\ln(2n)\rfloor+1$.
Agents flood each observed change index and retain $D_n+1$ local
checkpoints. Exactly $D_n$ rounds after a change, they synchronously
restore its checkpoint and restart the recurrence. A prospective output
is returned only after $D_n$ further rounds have ruled out an unprocessed
earlier change; numerical computation and flooding continue during this
validation. Appendix~\ref{app:flooding} specifies the protocol.

\begin{samepage}
\begin{proposition}[Flooding detector]
\label{prop:flooding}
Under Assumptions~\ref{ass:uniform-spectrum} and~\ref{ass:dwell-time},
suppose $\tau\ge2D_n$ and the preceding local-observation and message
assumptions hold. The flooding-and-rollback version of
Algorithm~\ref{alg:wave} reaches $\eps$-consensus, for $\eps\in(0,1)$, in
\[
 K=O\!\left(\left(\sqrt\chi+\frac\chi\tau\right)\ln\frac e\eps
       +\sqrt\chi\ln n\right)
\]
communication rounds, without knowing $\tau$. It uses $O(D_n d)$ local
vector storage and one change index per neighbor message.
\end{proposition}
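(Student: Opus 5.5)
The plan is to reduce the flooding version to the ideal-detector analysis of Theorem~\ref{thm:dwell-det}. The only difference is that restarts happen $D_n$ rounds late. The reduction rests on a rollback invariant: the state the protocol works from is always the output of a sequence of windows, each of which ran on a single operator.

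\emph{Step 1: change notification.} Whenever $\Lap_r\neq\Lap_{r-1}$, some agent sees its row change before round $r$, so at least one agent detects it locally. That agent floods the index $r$. Every $\Lap_k$ has $\ker\Lap_k=\operatorname{span}\{\ones_n\}$, so every graph $\mathcal G_k$ is connected. Consequently the set of informed agents grows by at least one per round, and after $n-1$ rounds every agent knows $r$. I would then argue that $D_n\ge n-1$, or more carefully that $D_n$ rounds suffice. Uniform spectral gap plus $\norm{\Lap}\le1$ bounds the diameter: a polynomial of degree less than $\operatorname{diam}$ cannot be small on the spectrum, and the Chebyshev bound $2e^{-2m/\sqrt\chi}<1/n$ for $m\ge(\sqrt\chi/2)\ln(2n)$ gives $\operatorname{diam}\le D_n$. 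I expect this diameter bound to be the delicate step. Over a time-varying sequence the argument needs care, but each graph separately has diameter at most $D_n$, and a fixed graph persists for $\tau\ge2D_n$ rounds after the change. So flooding along that persistent graph informs everyone within $D_n$ rounds.

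\emph{Step 2: rollback correctness.} All agents learn $r$ by round $r+D_n$. They restore the checkpoint saved before round $r$, which the $D_n+1$ stored checkpoints make available, and they restart with $m=1$. The resulting state is exactly what the ideal detector would produce at round $r$. The $D_n$ rounds are simply wasted. Because $\tau\ge2D_n$, no second change can occur before this rollback completes, so rollbacks never nest. The validation delay before output likewise ensures that no unprocessed change can corrupt the returned state. With these two facts the returned vector equals the ideal-detector output, which is $\eps$-consensus by Theorem~\ref{thm:dwell-det} and~\eqref{eq:wave-call-contract}.

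\emph{Step 3: counting.} Each change costs at most $D_n$ extra rounds. Over the run there are at most $K_{\mathrm{ideal}}/\tau+1$ changes in the ideal timeline, stretched by a constant factor since $D_n\le\tau/2$. The overhead is therefore $O((K_{\mathrm{ideal}}/\tau)D_n)$, which is $O(K_{\mathrm{ideal}})$ because $D_n\le\tau/2$. The final validation adds $D_n=O(\sqrt\chi\ln n)$ rounds. Combining these with Theorem~\ref{thm:dwell-det} gives the stated $K$. The storage bound is $D_n+1$ checkpoints of size $d$ per agent, which is $O(D_n d)$, and each message carries only the latest change index.
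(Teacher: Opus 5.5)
Your proposal is essentially the paper's proof: the Chebyshev-positivity bound on the support diameter, flooding while the new operator stays fixed for $D_n$ rounds, synchronized rollback to the checkpoint at the change with a length-one restart, a $D_n$-round validation delay, and an overhead count that uses $D_n\le\tau/2$. The one step to state more carefully is the identification in Step~2, which is not correct as written: the retained computation is not the ideal-detector run on the physical sequence with idle rounds inserted. It is the ideal-detector run on the sequence with the intervals $[t_i,t_i+D_n)$ deleted, so post-change segments shrink to $\tau_i-D_n\ge\tau/2$, the restored counter must include the interrupted window's credit $\ln\ChebT_{h}(z_0)$, and Lemma~\ref{lem:dwell-shifted-start} must be applied with spacing $\tau/2$ rather than $\tau$.
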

\end{samepage}

The nonzero-support graph of each operator has diameter at most $D_n$.
Indeed, the spectral bound gives
$\norm{P_{D_n}^\chi(\Lap)-\proj}
\leq2e^{-2D_n/\sqrt\chi}<1/n$, so every entry of
$P_{D_n}^\chi(\Lap)$ is positive. If two agents were more than $D_n$
hops apart, the corresponding entry of any polynomial of degree at most $D_n$
in $\Lap$ would be zero, a contradiction. Thus the same spectral
information used for acceleration also bounds the reporting delay.
Consequently, every rollback occurs before the next change; after deleting
the discarded rounds, the retained computation is an ideal-detector run
with segment lengths at least $\tau/2$. The condition
$\tau\ge\sqrt\chi\ln(2n)+2$ is a sufficient, slightly stronger spacing
condition. 

\section{Application to decentralized optimization}
\label{sec:optimization}

We consider
\[
    \min_{x\in\R^d}
    f(x):=\frac1n\sum_{i=1}^n f_i(x),
\]
where each $f_i$ is convex and $\alpha$-smooth, while $f$ is
$\mu$-strongly convex. Let $x^\star:=\argmin_x f(x)$ and
$\kappa:=\alpha/\mu$. \WAVESTM{} combines restarted Similar Triangles
iterations \citep{Nesterov2004,GorbunovDanilova2020} with \WAVE{}
communication. The agents start from a common point $x_0$ and are given
$G_0>0$ such that
$n^{-1}\sum_i\norm{\nabla f_i(x_0)}^2\leq G_0^2$.

\begin{theorem}[\WAVESTM{}]
\label{thm:opt-main}
Let Assumption~\ref{ass:uniform-spectrum} and either
Assumption~\ref{ass:metric-drift} or~\ref{ass:dwell-time} hold with
$\chi\geq4$. For every $\eps>0$, \WAVESTM{} returns an $\eps$-solution
at every agent:
\begin{equation}\label{eq:decent_opt}
        \max_{1\leq i\leq n}
    \bigl\{f(y_{\rm out}^{(i)})-f(x^\star)\bigr\}
    \leq\eps.
\end{equation}
Each agent evaluates $\widetilde O(\sqrt\kappa)$ local gradients, and
the number of communication rounds satisfies
\begin{equation}
\label{eq:opt-main}
K=
\begin{cases}
\widetilde O\!\left(
\sqrt\kappa\,
[\sqrt\chi+\min\{\beta\chi^2,\chi\}]
\right),
& \text{metric drift},\\[1mm]
\widetilde O\!\left(
\sqrt\kappa\,
[\sqrt\chi+\chi/\tau]
\right),
& \text{piecewise-constant changes}.
\end{cases}
\end{equation}
Here $\widetilde O$ hides logarithmic factors in the target accuracy,
initialization, $\kappa$, and $n$. The piecewise-constant guarantee uses
the ideal external change detector.
\end{theorem}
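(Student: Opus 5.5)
We use the inexact-consensus framework: \WAVESTM{} is restarted Similar Triangles (STM) run on the average objective $f$, with every averaging step replaced by a call to \WAVE{}. By \eqref{eq:wave-call-contract}, a call with target $\delta$ preserves the mean exactly and contracts the disagreement by $\delta$. The plan has four steps.

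\textbf{Step 1 (exact mean dynamics).} Each agent keeps local copies $(x^{(i)},y^{(i)},z^{(i)})$ of the STM sequences. Each STM step:
\begin{itemize}
\item computes local gradients at $x^{(i)}$;
\item forms the local $z$-update;
\item mixes the stacked $z$ (and, for gradient tracking, the stacked gradient estimator) by one \WAVE{} call with accuracy $\delta$;
\item updates $y$ by the usual convex combination.
\end{itemize}
Since \WAVE{} preserves $\bproj$ and all other operations are affine, the averaged iterates $\bar x,\bar y,\bar z$ follow STM exactly. The only exception is the gradient oracle: it is $n^{-1}\sum_i\nabla f_i(x^{(i)})$ rather than $\nabla f(\bar x)$. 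By $\alpha$-smoothness, this inexact gradient has error at most $\alpha\, n^{-1/2}\norm{\bprojperp\boldsymbol x}$.

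\textbf{Step 2 (STM with inexact oracle).} We use the known analysis of STM with an additive-error, $(\delta',\alpha)$-type inexact oracle \citep{GorbunovDanilova2020}. With gradient error $\sigma$ at every step, after $N=O(\sqrt{\kappa})$ steps the gap $f(\bar y_N)-f^\star$ is at most half the previous gap plus an error of order $N\sigma R+N\sigma^2/\alpha$, where $R$ is the distance to the optimum. Restarting $O(\ln(\cdot/\eps))$ times then gives $\widetilde O(\sqrt\kappa)$ gradient evaluations. This works provided $\sigma$ is kept below a target of order $\eps/(\sqrt\kappa R)$, with $R$ bounded through $G_0$ and strong convexity.

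\textbf{Step 3 (bounding the disagreement).} This is the main obstacle. We need $\norm{\bprojperp\boldsymbol x_t}\le\sigma\sqrt n/\alpha$ uniformly along the run. Each step injects new disagreement of size at most the step size times the local gradient spread. We bound this spread through $G_0$ plus $\alpha$ times the iterate drift, inductively, since the iterates stay in a ball whose radius is controlled by the restart analysis. \WAVE{} then shrinks the disagreement by $\delta$. Choosing $\delta$ polynomially small in $(\eps,\kappa,n,G_0,\alpha)$ closes the induction $\norm{\bprojperp\boldsymbol x_{t+1}}\le\delta(\norm{\bprojperp\boldsymbol x_t}+\text{injection})$. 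If the STM step sizes grow, we use gradient tracking instead, following \citet{LiLin2024JMLR}, so that the injection is controlled by differences of consecutive gradients. We would try the simpler version first: restarts keep $N=O(\sqrt\kappa)$ and the step sizes bounded by $O(\sqrt\kappa/\alpha\cdot\ldots)$, so the injection is polynomially bounded and $\ln(1/\delta)$ is only logarithmic.

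\textbf{Step 4 (counting and final accuracy at every agent).} We have $\widetilde O(\sqrt\kappa)$ \WAVE{} calls, each with target $\delta$. By Theorem~\ref{thm:main-drift} in the drift case, each call costs $O((\sqrt\chi+\min\{\beta\chi^2,\chi\})\ln(e/\delta))$ rounds. In the piecewise case the cost comes from Theorem~\ref{thm:dwell-det}. Consecutive calls may start mid-interval, but those guarantees hold from any starting round $k$, because the assumptions are uniform in time (for the piecewise-constant case we rely on the appendix handling of calls starting inside an interval). Multiplying the per-call cost by the number of calls gives \eqref{eq:opt-main}.

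For \eqref{eq:decent_opt}, we apply one final \WAVE{} call to $\boldsymbol y$ with small $\delta$. By smoothness,
\[
f(y^{(i)})\le f(\bar y)+\norm{\nabla f(\bar y)}\norm{y^{(i)}-\bar y}+\tfrac{\alpha}{2}\norm{y^{(i)}-\bar y}^2,
\]
and $\norm{y^{(i)}-\bar y}\le\norm{\bprojperp\boldsymbol y}$. This makes every agent's suboptimality at most $\eps$ at logarithmic extra cost.
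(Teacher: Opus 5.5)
Your plan is correct and is essentially the paper's argument. Exact mean preservation turns the averaged iterates into STM with gradient error $\alpha n^{-1/2}\norm{\bprojperp\boldsymbol x}$, restarts cap the step size at $\omega_{N_0}\le5\sqrt\kappa/\alpha$ so that a single polynomially small tolerance $\delta_\star$ closes the joint mean/disagreement induction (no gradient tracking is needed), and the call costs of Corollary~\ref{cor:wave-call-cost} plus a final local-output call give \eqref{eq:opt-main} and \eqref{eq:decent_opt}. The remaining differences are bookkeeping: the paper uses a stage-proportional error level $\eta_s\propto\mu r_s/\sqrt\kappa$ with a $1/4$ re-mixing call between stages instead of your fixed target $\eps/(\sqrt\kappa R)$, and it bounds the within-stage radius by a causal trajectory estimate for noisy STM (Lemma~\ref{lem:stm-noise}(i)), not by the restarts, which is what your ``inductively'' must supply.
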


Consequently, when $\beta=O(\chi^{-3/2})$ or
$\tau=\Omega(\sqrt\chi)$, the leading communication dependence is
$\widetilde O(\sqrt{\kappa\chi})$, recovering the fixed-network scaling
up to logarithmic factors.

\begin{remark}
Guarantee~\eqref{eq:decent_opt} is stronger than the usual objective bound
for the network average
$\bar y_{\rm out}:=n^{-1}\sum_{i=1}^n y_{\rm out}^{(i)}$.
Indeed, convexity of $f$ gives
$f(\bar y_{\rm out})-f(x^\star)
\leq
\frac1n\sum_{i=1}^n
\bigl(f(y_{\rm out}^{(i)})-f(x^\star)\bigr)
\leq\eps.$
Moreover, \eqref{eq:decent_opt} guarantees an $\eps$-solution directly
at every agent, whereas the averaged iterate need not be locally available.
\end{remark}

\section{Numerical experiments}
\label{sec:experiments}

We compare \WAVE{} with gossip, minimax Richardson iteration, and the
time-varying method of \citet{MontijanoMontijanoSagues2013}. We also run
the classical Chebyshev semi-iteration \citep{GolubVarga1961} continuously,
without restarts, to test whether restarts are needed when the network
changes. Every method processes the same operators and spectral bounds. We
report the exact worst-case gap
$r_k:=\max_{\boldsymbol e_0\ne0}\norm{\boldsymbol e_k}/
\norm{\boldsymbol e_0}$; the level $10^{-6}$ is used only to summarize
first-passage times.

\paragraph{Metric drift.}
Figure~\ref{fig:fair-consensus-main}\textup{(a)} uses the controlled family
of Appendix~\ref{app:high-drift-family}, with $\chi=1.6\cdot10^5$ and
$\beta\chi^{3/2}\approx30$. This drift is $90$ times the sufficient
threshold for the fixed-network rate, and the \WAVE{} rule selects $m=4$.
\WAVE{} reaches $r_k\leq10^{-6}$ in about $2.8\cdot10^5$ rounds, versus
$1.1\cdot10^6$ for Montijano and Richardson; gossip remains above the
target after $1.25\cdot10^6$ rounds. Chebyshev without restarts is unstable
and leaves the plotted range within $5\cdot10^4$ rounds.

\paragraph{Piecewise-constant changes.}
Figure~\ref{fig:fair-consensus-main}\textup{(b)} uses $32$ independent
pairs of connected sparse random geometric graphs with $n=100$ and
$\chi\in[339,2873]$ (median $905$). The operator changes every
$\operatorname{round}(\sqrt\chi)$ rounds, with each change reported to
\WAVE{}. Median first-passage counts are $180$ rounds for \WAVE{}, $344$
for Montijano and Richardson, and $422$ for gossip. \WAVE{} is first on
all $32$ pairs. Chebyshev reaches the target on only $12$ pairs within the
$1000$-round budget.

\begin{figure}[H]
\centering
\vspace{-1mm}
\includegraphics[width=0.98\textwidth]
{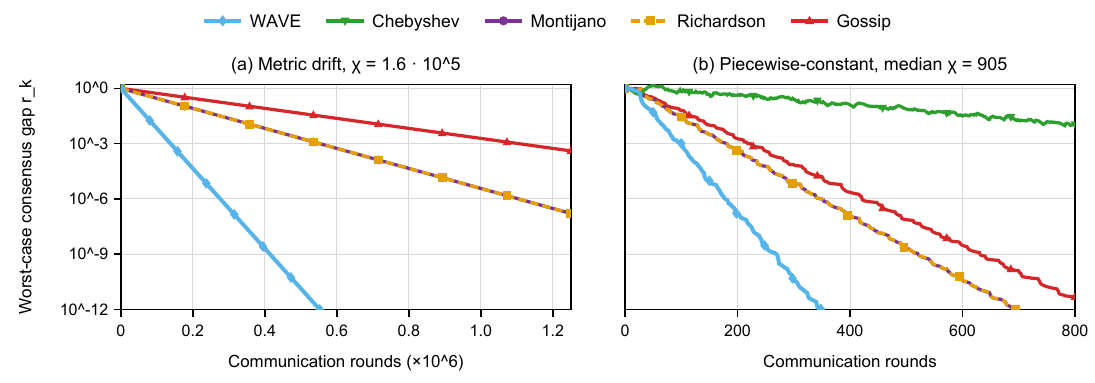}
\vspace{-1mm}
\caption{Worst-case consensus gap against communication rounds.
Classical Chebyshev is run without restarts. \textup{(a)} Controlled
metric drift with $\chi=1.6\cdot10^5$ and $m=4$. \textup{(b)} Pointwise
median over $32$ sparse random-geometric graph pairs, with changes every
$\operatorname{round}(\sqrt\chi)$ rounds.}
\label{fig:fair-consensus-main}
\vspace{-2mm}
\end{figure}

\section{Conclusion}
\label{sec:conclusion}


\WAVE{} shows that Chebyshev acceleration of consensus survives network variation if the recurrence is restarted before perturbations from changing, noncommuting operators outweigh its progress. To our knowledge, its bounds are the first consensus guarantees that interpolate between the fixed-network $\sqrt\chi$ and the arbitrary-variation $\chi$ rates as explicit functions of the one-step drift $\beta$, $O((\sqrt\chi+\min\{\beta\chi^2,\chi\})\ln(e/\eps))$, and of the spacing $\tau$ between changes, $O((\sqrt\chi+\chi/\tau)\ln(e/\eps))$; the fixed-network rate is retained whenever $\beta\le(3\chi^{3/2})^{-1}$ or $\tau\ge\sqrt\chi$. These guarantees require no persistent connected subgraph and no knowledge of $\tau$, hold in expectation for memoryless change times, and remain valid, up to an additive $O(\sqrt\chi\ln n)$ term, when sufficiently separated changes are detected by flooding inside the network. For convex local objectives with a strongly convex average, \WAVESTM{} multiplies these network factors by $\widetilde O(\sqrt\kappa)$ and thus retains the fixed-network $\widetilde O(\sqrt{\kappa\chi})$ dependence in the same two regimes. In the experiments, \WAVE{} outperforms all compared methods in both regimes.

\subsubsection*{Acknowledgments}
The research was supported by Russian Science Foundation (project No. 23-11-00229), \url{https://rscf.ru/en/project/23-11-00229/}.

\subsection*{AI use statement}
Generative AI tools were used for language editing and to improve the clarity of the manuscript. We reviewed all resulting changes and take full responsibility for the final content.

\bibliography{references}
\bibliographystyle{plainnat}

\newpage
\appendix

\section*{Supplementary material: contents}
\phantomsection
\addcontentsline{toc}{section}{Supplementary material: contents}
\begingroup
\small
\noindent
\hyperref[app:notation]{A. Notation and asymptotic conventions}
\dotfill\pageref*{app:notation}\\
\hyperref[app:model-details]{B. Model and implementation details}
\dotfill\pageref*{app:model-details}\\
\hyperref[app:chebyshev]{C. Classical Chebyshev facts}
\dotfill\pageref*{app:chebyshev}\\
\hyperref[app:metric-proofs]{D. Proofs for metric drift}
\dotfill\pageref*{app:metric-proofs}\\
\hyperref[app:dwell-proofs]{E. Piecewise-constant networks}
\dotfill\pageref*{app:dwell-proofs}\\
\hspace*{1em}\hyperref[app:flooding]{In-network change detection and rollback}
\dotfill\pageref*{app:flooding}\\
\hyperref[app:optimization]{F. Proofs for the optimization results}
\dotfill\pageref*{app:optimization}\\
\hspace*{1em}\hyperref[app:accgt]{Accelerated gradient tracking with \WAVE{}}
\dotfill\pageref*{app:accgt}\\
\hyperref[app:supplementary]{G. Examples and interpretation}
\dotfill\pageref*{app:supplementary}\\
\hyperref[app:supplementary-experiments]{H. Supplementary numerical experiments}
\dotfill\pageref*{app:supplementary-experiments}
\endgroup

\section{Notation and asymptotic conventions}
\label{app:notation}

Table~\ref{tab:notation} summarizes the notation and defines the
comparison symbol $\asymp$ used in the main scaling statements. Symbols
used only within individual proofs are defined locally.

\begingroup
\scriptsize
\setlength{\tabcolsep}{4pt}
\renewcommand{\arraystretch}{1.10}
\begin{longtable}{@{}
>{\raggedright\arraybackslash}p{0.16\textwidth}
>{\raggedright\arraybackslash}p{0.39\textwidth}
>{\raggedright\arraybackslash}p{0.37\textwidth}
@{}}
\caption{Consolidated notation and asymptotic conventions used throughout the paper.}
\label{tab:notation}\\
\toprule
\textbf{Symbol} & \textbf{Definition / convention} & \textbf{Meaning} \\
\midrule
\endfirsthead

\multicolumn{3}{@{}l}{\textit{Table~\ref{tab:notation} continued.}}\\[1mm]
\toprule
\textbf{Symbol} & \textbf{Definition / convention} & \textbf{Meaning} \\
\midrule
\endhead

\midrule
\multicolumn{3}{r@{}}{\textit{Continued on the next page.}}\\
\endfoot

\bottomrule
\endlastfoot

\multicolumn{3}{@{}l}{\textbf{Linear-algebra and consensus conventions}}\\

$n,d,i$ &
$n$ agents, local dimension $d$, agent index $i\in\{1,\ldots,n\}$ &
Network size, local vector dimension, and agent index. A parenthesized
superscript, as in $u_k^{(i)}$, is reserved for the agent index.\\

$\ones_n$, $\ones_n^\perp$ &
$\ones_n=(1,\ldots,1)^\top$,
$\ones_n^\perp=\{v\in\R^n:\ones_n^\top v=0\}$ &
All-ones vector and disagreement subspace.\\

$\proj$, $\proj_\perp$ &
$\proj=\ones_n\ones_n^\top/n$, $\proj_\perp=I_n-\proj$ &
Orthogonal projectors onto the consensus and disagreement subspaces.\\

$\bproj$, $\bprojperp$ &
$\bproj=\proj\otimes I_d$, $\bprojperp=\proj_\perp\otimes I_d$ &
Block projectors on stacked vectors in $\R^{nd}$.\\

$u_k^{(i)}$, $\boldsymbol u_k$ &
$\boldsymbol u_k=((u_k^{(1)})^\top,\ldots,(u_k^{(n)})^\top)^\top$ &
Local and stacked consensus states at communication round $k$.\\

$\bar u_0$ &
$\bar u_0=n^{-1}\sum_i u_0^{(i)}$ &
Initial average.\\

$\boldsymbol e_k$, $\boldsymbol e_{(w)}$ &
$\boldsymbol e_k=\boldsymbol u_k-\ones_n\otimes\bar u_0$,
$\boldsymbol e_{(w)}:=\boldsymbol e_{k_0+wm}$ for equal-length windows starting at $k_0$ &
Global consensus error and error at a window boundary.\\

$\lambda$, $\lambda_{\max}(A)$,
$\lambda_{\min}^{+}(A)$ &
$\lambda$ is reserved for eigenvalues or scalar spectral arguments;
$\lambda_{\max}(A)$ and $\lambda_{\min}^{+}(A)$ are the largest and
smallest positive eigenvalues &
Spectral variables and extremal positive eigenvalues.\\

$\spec(A)$, $A|_S$ &
$\spec(A)$ is the set of eigenvalues of $A$; $A|_S$ is the restriction of
$A$ to an invariant subspace $S$ &
Spectrum and restricted operator.\\

$\chi(A)$ &
$\chi(A)=\lambda_{\max}(A)/\lambda_{\min}^{+}(A)$ and
$\chi(cA)=\chi(A)$ for $c>0$ &
Condition number on $(\ker A)^\perp$; invariant under positive scalar
normalization.\\

$\norm{\cdot}$, $\inner{\cdot}{\cdot}$ &
Euclidean norm for vectors, spectral norm for matrices, and the
Euclidean inner product &
Norm and inner-product conventions.\\

$\lesssim$, $\gtrsim$, $\asymp$ &
For nonnegative $a,b$, $a\lesssim b$ means $a\le Cb$ for an absolute
constant $C>0$ independent of the problem parameters;
$a\gtrsim b$ is the reverse inequality and $a\asymp b$ means both &
Comparison up to absolute multiplicative constants.  Parameter-dependent
constants are stated explicitly when used.\\

\midrule
\multicolumn{3}{@{}l}{\textbf{Network and variation quantities}}\\

$V$ &
$V=\{1,\ldots,n\}$ &
Fixed set of agent indices.\\

$\mathcal G_k=(V,\mathcal E_k)$ &
Connected undirected communication graph at round $k$ &
Time-varying communication network.\\

$\chi$ &
$\spec(\Lap_k|_{\ones_n^\perp})\subseteq[\chi^{-1},1]$ for every $k$ &
Known conservative spectral-envelope condition bound for the normalized
communication operators.\\

$\Lap_k$, $\bLap_k$ &
$\bLap_k=\Lap_k\otimes I_d$, and
$\spec(\Lap_k|_{\ones_n^\perp})\subseteq[\chi^{-1},1]$ &
Normalized agent-level gossip matrix and its lifted operator on stacked
states.\\

$\Lclass(\chi)$ &
$\{\Lap:\Lap\text{ is a normalized gossip matrix and }
\spec(\Lap|_{\ones_n^\perp})\subseteq[\chi^{-1},1]\}$ &
Communication-operator class used in the model.\\

$\beta$ &
$\norm{\Lap_{k+1}-\Lap_k}\le\beta$ &
One-step operator drift in the metric-drift model.\\

$t_i$, $\tau_i$, $\tau$ &
$\tau_i=t_{i+1}-t_i$ and $\tau_i\ge\tau$ under the minimum-spacing assumption &
Change times, constant-segment lengths, and their minimum spacing.\\

$\bar\tau$ &
$\Prob(B_k=1)=1/\bar\tau$; equivalently, $\E T_i=\bar\tau$ for the
geometric segment lengths $T_i$ &
Mean segment length in the memoryless change-time model.\\

\midrule
\multicolumn{3}{@{}l}{\textbf{Communication and WAVE indices}}\\

$k,K$ &
$k=0,\ldots,K-1$ indexes communication updates; states are indexed by
$k=0,\ldots,K$ &
Global communication index and total number of communication rounds.\\

$w,J$ &
$w=0,\ldots,J-1$ &
Chebyshev-window index and number of complete windows.\\

$t$ &
$t=0,\ldots,m$ inside a single Chebyshev window &
Local step index within a window.\\

$\widetilde m_\beta,m_\beta$ &
Defined by \eqref{eq:scale-of-beta} &
Continuous drift target and selected integer window length.\\

$m,m_{\max},h$ &
$m$ is the current attempted length,
$m_{\max}=\lfloor\sqrt\chi\rfloor$, and $h$ is the number of rounds
executed in an attempted window &
Attempted length, dyadic cap, and executed length.\\

$q$ &
Increment $m_\beta^2/(5\chi)$ per drift window and $\ln\ChebT_h(z_0)$ per
piecewise window of $h$ rounds &
Certified logarithmic contraction accumulated by
Algorithm~\ref{alg:wave}.\\

$D_n$ &
$D_n=\lfloor\frac{\sqrt\chi}{2}\ln(2n)\rfloor+1$ &
Delay of the flooding detector (Proposition~\ref{prop:flooding}).\\

\midrule
\multicolumn{3}{@{}l}{\textbf{Chebyshev quantities}}\\

$\ChebT_m$, $\ChebU_m$ &
Chebyshev polynomials of the first and second kind &
Classical polynomial basis used in the accelerated recurrence.\\

$z_\chi(\lambda)$, $z_0$ &
$z_\chi(\lambda)=(1+\chi^{-1}-2\lambda)/(1-\chi^{-1})$,
$z_0=z_\chi(0)$ &
Affine map sending the full interval $[\chi^{-1},1]$ to $[-1,1]$ and
the image of the origin.\\

$P_m^\chi(\lambda)$ &
$P_m^\chi(\lambda)=\ChebT_m(z_\chi(\lambda))/\ChebT_m(z_0)$ &
Full-spectrum shifted and normalized Chebyshev residual executed by
\WAVE{}.\\

$\rho_m^\chi$ &
$\rho_m^\chi=\max_{[\chi^{-1},1]}|P_m^\chi|$ &
Full-spectrum minimax residual value.\\

$a_t,c_t$ &
$a_t=2\ChebT_t(z_0)/\ChebT_{t+1}(z_0)$ for $t\geq0$ and
$c_t=\ChebT_{t-1}(z_0)/\ChebT_{t+1}(z_0)$ for $t\geq1$ &
Coefficients of the Chebyshev three-term recurrence.\\

\midrule
\multicolumn{3}{@{}l}{\textbf{Optimization quantities and indices}}\\

$f_i$, $f$ &
$f(x)=n^{-1}\sum_{i=1}^n f_i(x)$ &
Local objectives and global average objective.\\

$x^\star$ &
$x^\star=\argmin f(x)$ &
Unique global optimizer.\\

$\alpha,\mu,\kappa$ &
Each $f_i$ is convex and $\alpha$-smooth; the average $f$ is
$\mu$-strongly convex;
$\kappa=\alpha/\mu$ &
Local smoothness, strong convexity of the average, and their ratio.\\

$G_0,R,H$ &
$n^{-1}\sum_i\norm{\nabla f_i(x_0)}^2\leq G_0^2$;
$R=G_0/\mu$, $H=(1+\kappa)G_0$ &
Single supplied initialization bound and its derived radius and
heterogeneity bounds.\\

$\WAVE(\boldsymbol u,k;\delta)$ &
Defined in Algorithm~\ref{alg:wave}; returns an updated state and the next
unused global communication round &
Direct \WAVE{} call with requested relative disagreement factor $\delta$.\\

$\ln_+(t)$ &
$\ln_+(t):=\max\{0,\ln t\}$ &
Nonnegative part of the natural logarithm.\\

$\ell$ &
$\ell=0,\ldots,N-1$ within an STM stage &
Inner optimization-iteration index.\\

$A_\ell$, $\omega_{\ell+1}$ &
$A_{\ell+1}=A_\ell+\omega_{\ell+1}$,
$\omega_{\ell+1}=(\ell+2)/(2\alpha)$ &
Weights of the Similar Triangles recurrence.\\

$s,S$ &
$s=0,\ldots,S-1$ &
Optimization-stage index and number of restart stages.\\

$N_0$, $r_s$, $r_\eps$, $\delta_\star$ &
$N_0=\lceil8\sqrt\kappa\rceil$,
$r_s=2^{-s}R$; $r_\eps$ and $\delta_\star$ are defined in
\eqref{eq:optimization-derived-scales} and \eqref{eq:delta-star} &
Stage length, deterministic radius schedule, and the single inner-call
\WAVE{} tolerance.\\

\end{longtable}
\endgroup

\section{Additional model and implementation details}
\label{app:model-details}

\subsection{Normalization and the linear operator model}

For a symmetric positive-semidefinite matrix $A$ with a positive
eigenvalue, its condition number on $(\ker A)^\perp$ is
\[
 \chi(A)=\frac{\lambda_{\max}(A)}{\lambda_{\min}^{+}(A)}.
\]
It is invariant under positive scalar normalization.  If the system
supplies an unnormalized weighted Laplacian $\widetilde\Lap_k$, a fixed
certified scale
$\Lambda\ge\sup_k\lambda_{\max}(\widetilde\Lap_k)$ may therefore be
absorbed into the communication weights by setting
$\Lap_k=\widetilde\Lap_k/\Lambda$.  The same scale must be used at all
rounds, since round-dependent rescaling changes metric drift.

In the linear-operator model used here, coefficients may depend on
the round number, the stated a priori parameters, and past external
change notifications, but not on a matrix-specific eigendecomposition,
the exact spectrum, or a topology-specific finite-time aggregation
scheme.  Generated vectors may be stored without restriction.  The
algorithms use no internal randomization: in the memoryless change-time model,
randomness belongs only to the network change process.

\subsection{Finite-window implementation and progress accounting}
\label{app:wave-implementation}

Algorithm~\ref{alg:chebyshev-window} evaluates the coefficient ratios in
\eqref{eq:cheb-coeffs-at-glance} without forming the potentially large
values $\ChebT_t(z_0)$.  In exact arithmetic it is the same
same recurrence and reproduces every fixed-operator Chebyshev iterate.

\begin{algorithm}[ht]
\caption{\ChebyshevWindow$(\boldsymbol u;k,m)$: finite run of the
classical Chebyshev semi-iteration}
\label{alg:chebyshev-window}
\KwIn{current state $\boldsymbol u\in\R^{nd}$; starting communication
round $k$; common spectral-envelope parameter $\chi$; window length $m\ge1$}
$z_0\leftarrow(1+\chi^{-1})/(1-\chi^{-1})$; 

$a_0\leftarrow2/z_0$\;

$\boldsymbol v_{\rm prev}\leftarrow\boldsymbol u$\;
$\boldsymbol v_{\rm curr}\leftarrow
   (a_0/2)z_\chi(\bLap_k)\boldsymbol u$\;
\For{$t=1,\dots,m-1$}{
  $a_t\leftarrow4/(4z_0-a_{t-1})$
  \tcp*[r]{$=2\ChebT_t(z_0)/\ChebT_{t+1}(z_0)$}
  $c_t\leftarrow a_{t-1}a_t/4$
  \tcp*[r]{$=\ChebT_{t-1}(z_0)/\ChebT_{t+1}(z_0)$}
  $\boldsymbol v_{\rm new}\leftarrow
     a_tz_\chi(\bLap_{k+t})\boldsymbol v_{\rm curr}
     -c_t\boldsymbol v_{\rm prev}$\;
  $\boldsymbol v_{\rm prev}\leftarrow\boldsymbol v_{\rm curr}$;
  
  $\boldsymbol v_{\rm curr}\leftarrow\boldsymbol v_{\rm new}$\;
}
\Return $\boldsymbol v_{\rm curr}$\;
\end{algorithm}

\begin{lemma}[Mean preservation and accumulation of certified progress]
\label{lem:wave-progress-accounting}
Every call of Algorithm~\ref{alg:wave} preserves the block average.
Moreover, if $\boldsymbol u^{(r)}$ and $q_r$ denote, respectively, the
state and the value of the progress counter after the $r$th update of
the outer loop, then
\[
    \bproj\boldsymbol u^{(r)}=\bproj\boldsymbol u^{(0)},
    \qquad
    \norm{\bprojperp\boldsymbol u^{(r)}}
    \le e^{-q_r}\norm{\bprojperp\boldsymbol u^{(0)}}.
\]
Consequently, termination with $q_r\ge\ln(1/\delta)$ implies
\[
    \norm{\bprojperp\boldsymbol u^{(r)}}
    \le\delta\norm{\bprojperp\boldsymbol u^{(0)}}.
\]
\end{lemma}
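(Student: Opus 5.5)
The plan is an induction on the number $r$ of outer-loop updates. For each update we establish two facts about the window map: it commutes with the splitting into consensus and disagreement components, and its restriction to the disagreement subspace has norm at most $e^{-\Delta q}$, where $\Delta q$ is the credit added to $q$ in that update. The base case $r=0$ is trivial since $q_0=0$. The termination statement then follows at once, because $q_r\ge\ln(1/\delta)$ gives $e^{-q_r}\le\delta$.

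\emph{Mean preservation.} Every normalized gossip matrix is symmetric with $\Lap\ones_n=0$, so $\proj\Lap=\Lap\proj=0$. Hence $\bproj z_\chi(\bLap_j)=z_0\bproj$ and $z_\chi(\bLap_j)$ leaves both the consensus and the disagreement subspaces invariant. Apply $\bproj$ to the recurrence \eqref{eq:moving-window-recurrence}. It gives $\bproj\boldsymbol v_1=\bproj\boldsymbol v_0$ and $\bproj\boldsymbol v_{t+1}=(a_tz_0-c_t)\bproj\boldsymbol v_t$ when $\bproj\boldsymbol v_t=\bproj\boldsymbol v_{t-1}$. Dividing the scalar identity $\ChebT_{t+1}(z_0)=2z_0\ChebT_t(z_0)-\ChebT_{t-1}(z_0)$ by $\ChebT_{t+1}(z_0)$ gives $a_tz_0-c_t=1$, so the consensus component is frozen by induction on $t$. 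Likewise, $\bprojperp$ commutes with each step, so the disagreement iterates obey the same recurrence started from $\bprojperp\boldsymbol u$. This lets us identify $\bprojperp\boldsymbol v_t$ with the error sequence $\boldsymbol e_t$ of \eqref{eq:local-window-recurrence} when $\boldsymbol u$ is taken as the starting state. The recurrence is linear and the consensus part is untouched, so the error relative to any fixed consensus vector follows it.

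\emph{Contraction per update.} In the drift model, the window has length $m_\beta$ and is complete. If $m_\beta=1$, the window is a single fixed operator, and Theorem~\ref{thm:metric-window} (degree-one case) gives the factor $\exp(-1/(5\chi))$. If $m_\beta\ge2$, the discussion after \eqref{eq:scale-of-beta} shows $m_\beta^2\le\chi$ and $\beta m_\beta\chi\le1/3$, so \eqref{eq:window-guarantee} gives the factor $\exp(-m_\beta^2/(5\chi))$, matching the credit.

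In the piecewise model, the window executes $h$ rounds and stops before any reported change. By the detector convention, all $h$ rounds therefore use a single operator $\bLap$. Proposition~\ref{prop:fixed-window-representation} then gives $\bprojperp\mathcal C_{k_0,h}(\boldsymbol u)=P_h^\chi(\bLap)\bprojperp\boldsymbol u$. Assumption~\ref{ass:uniform-spectrum} together with Proposition~\ref{prop:cheb-minimax} bounds the norm of this map on the disagreement subspace by $\rho_h^\chi=1/\ChebT_h(z_0)=e^{-\ln\ChebT_h(z_0)}$, which is exactly the credit.

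Multiplying the per-update factors and summing the credits closes the induction. The step I expect to need the most care is not an estimate but the bookkeeping. One must check that the error of Theorem~\ref{thm:metric-window} is exactly the disagreement component, which follows from the invariance shown above. One must also use the detector semantics to guarantee that a piecewise window never straddles a change. A window stopped at $h<m$ still carries the full $\ln\ChebT_h(z_0)$, since Proposition~\ref{prop:fixed-window-representation} holds for every prefix length.
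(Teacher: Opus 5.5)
Your proposal is correct and follows the paper's proof essentially step for step. You obtain mean preservation from $\bproj z_\chi(\bLap_k)=z_0\bproj$, $a_0/2=1/z_0$, and $a_tz_0-c_t=1$, and per-update disagreement bounds from Theorem~\ref{thm:metric-window} (drift) or Propositions~\ref{prop:fixed-window-representation} and~\ref{prop:cheb-minimax} (piecewise), multiplied across updates; your explicit check that every step leaves the disagreement subspace invariant, so that the window error in Theorem~\ref{thm:metric-window} is exactly the disagreement component, is a useful detail that the paper leaves implicit.
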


\begin{proof}
Since $\bLap_k\bproj=\bproj\bLap_k=0$,
$\bproj z_\chi(\bLap_k)=z_0\bproj$.  The first iterate of a Chebyshev
window therefore preserves the mean because $a_0/2=1/z_0$.  For every
subsequent local step, the Chebyshev recurrence gives
\[
 a_tz_0-c_t
 =
 \frac{2z_0\ChebT_t(z_0)-\ChebT_{t-1}(z_0)}
      {\ChebT_{t+1}(z_0)}
 =1.
\]
Induction on the local step thus shows that every complete or
interrupted Chebyshev window preserves the input mean.

Under metric drift, every window has length $m_\beta$ and norm at most
$e^{-m_\beta^2/(5\chi)}$ on the disagreement subspace by
Theorem~\ref{thm:metric-window}, which is its credit. Under the
piecewise-constant scheduler, every window, complete or interrupted by a
report, runs on one operator; by Propositions~\ref{prop:fixed-window-representation}
and~\ref{prop:cheb-minimax} a window of $h$ rounds has norm at most
$1/\ChebT_h(z_0)=e^{-\ln\ChebT_h(z_0)}$ on the disagreement subspace,
which is again its credit. Multiplication of these successive bounds
proves
$\norm{\bprojperp\boldsymbol u^{(r)}}\le
e^{-q_r}\norm{\bprojperp\boldsymbol u^{(0)}}$.  No commutativity
between different windows or operators is used.
\end{proof}

\section{Classical Chebyshev facts}
\label{app:chebyshev}

Here we collect the classical polynomial background, an illustrative
residual plot, and the proofs used in Sections
\ref{sec:main-results}, \ref{sec:metric-analysis},
and~\ref{sec:dwell}.  These details are
separated from the main argument because the polynomial construction is
standard; the moving-operator stability analysis is not.

\subsection{Residual-polynomial viewpoint and illustration}
\label{subsec:residual-minimax}

For a fixed operator, any mean-preserving method in this linear-operator
model gives
\[
 \boldsymbol e_m=p_m(\bLap)\boldsymbol e_0,\qquad
 \deg p_m\le m,\qquad p_m(0)=1 .
\]
The spectral theorem therefore yields
\[
 \norm{\boldsymbol e_m}
 \le\max_{\lambda\in[\chi^{-1},1]}|p_m(\lambda)|
      \norm{\boldsymbol e_0}.
\]
The affine map $z_\chi$ sends the admissible interval to $[-1,1]$, so
the classical alternation theorem gives the shifted, normalized
Chebyshev residual in Proposition~\ref{prop:cheb-minimax}.  In
particular,
\[
 \max_{\lambda\in[\chi^{-1},1]}|P_m^\chi(\lambda)|
 \le2e^{-2m/\sqrt\chi}.
\]

\begin{figure}[t]
 \centering
 \includegraphics[width=0.6\textwidth]{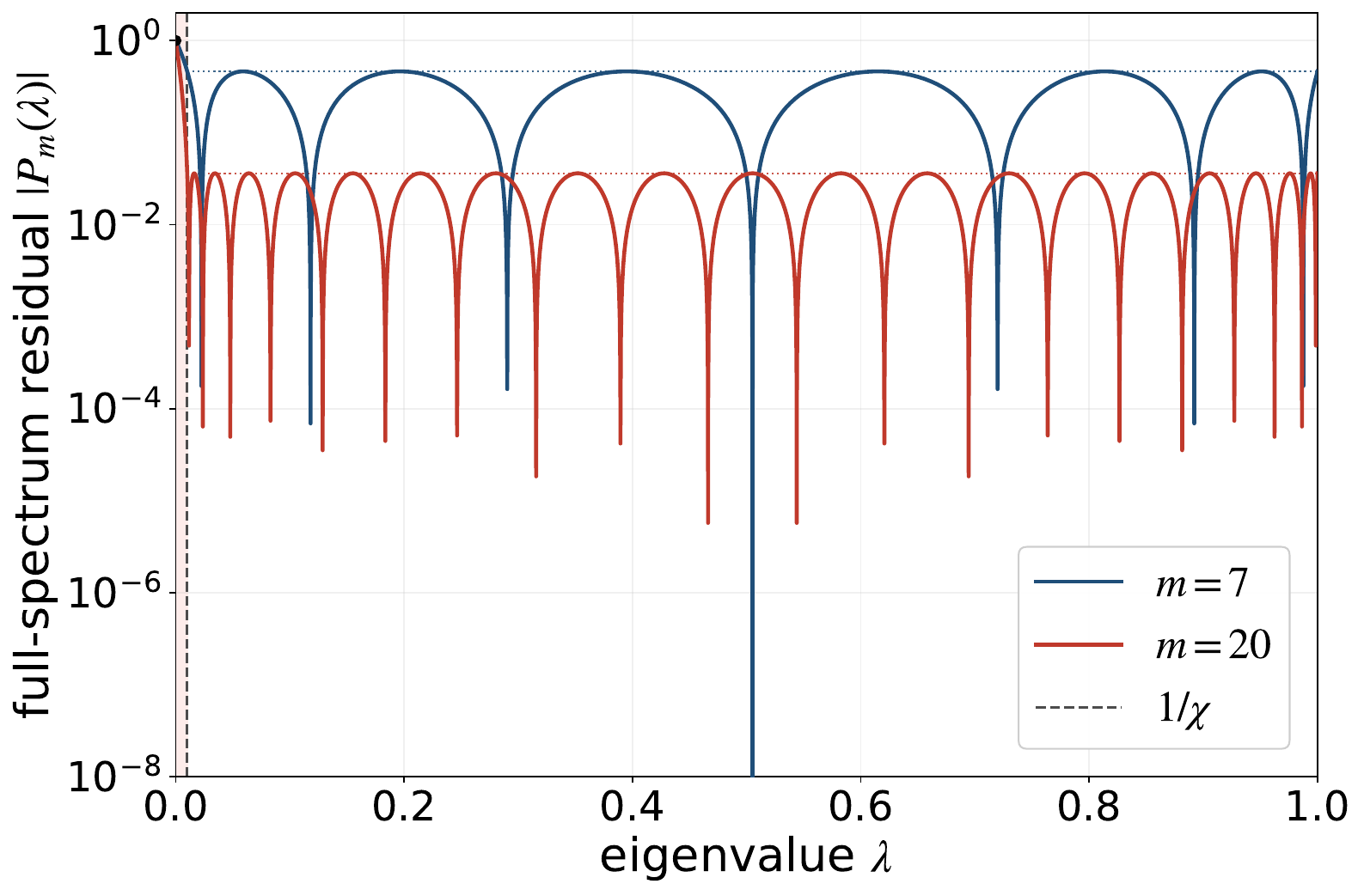}
 \caption{Full-spectrum residuals $|P_m^\chi(\lambda)|$ for $m=7$ and
 $m=20$ with $\chi=100$.  Both polynomials use the same interval
 $[\chi^{-1},1]$; only the finite window length changes.}
 \label{fig:cheb_residual}
\end{figure}

\subsection{Proof of Proposition~\ref{prop:cheb-minimax}}

The affine map $x=z_\chi(\lambda)$ sends $[\chi^{-1},1]$ onto $[-1,1]$ and
sends the normalization point $\lambda=0$ to $z_0>1$.  Hence
\eqref{eq:minimax-value} is equivalent to
\[
\min_{\substack{\deg q\le m\\q(z_0)=1}}
\max_{x\in[-1,1]}|q(x)|.
\]
The polynomial $q_\star(x)=\ChebT_m(x)/\ChebT_m(z_0)$ is feasible and
has uniform norm $1/\ChebT_m(z_0)$ on $[-1,1]$.

Let
\[
    x_j:=\cos(j\pi/m),\qquad j=0,\ldots,m,
\]
so that $x_0>x_1>\cdots>x_m$ and
$\ChebT_m(x_j)=(-1)^j$.  Let $\ell_j$ be the Lagrange cardinal
polynomial associated with these nodes.  Since $z_0>x_0$,
$\operatorname{sign}\ell_j(z_0)=(-1)^j$.  Interpolation of
$\ChebT_m$ at the $m+1$ nodes therefore gives
\[
 \ChebT_m(z_0)
 =\sum_{j=0}^m\ell_j(z_0)(-1)^j
 =\sum_{j=0}^m|\ell_j(z_0)|.
\]
For every polynomial $q$ of degree at most $m$ with $q(z_0)=1$,
Lagrange interpolation yields
\[
  1
  =\left|\sum_{j=0}^m\ell_j(z_0)q(x_j)\right|
  \le
  \norm{q}_{L^\infty[-1,1]}
  \sum_{j=0}^m|\ell_j(z_0)|
  =
  \ChebT_m(z_0)\norm{q}_{L^\infty[-1,1]}.
\]
Hence $\norm{q}_{L^\infty[-1,1]}\ge1/\ChebT_m(z_0)$, and
$q_\star=\ChebT_m/\ChebT_m(z_0)$ attains equality.

If equality holds, equality must hold both in the triangle inequality
and in all $m+1$ pointwise bounds above.  Consequently,
\[
    q(x_j)=\frac{(-1)^j}{\ChebT_m(z_0)}
    =q_\star(x_j),\qquad j=0,\ldots,m.
\]
Two polynomials of degree at most $m$ agreeing at $m+1$ distinct points
are identical.  Thus $q=q_\star$, proving uniqueness.

Finally,
\[
    \theta_\chi:=\operatorname{arccosh}z_0
    =2\operatorname{artanh}(\chi^{-1/2})
    \ge2\chi^{-1/2},
\]
so
\[
    \ChebT_m(z_0)=\cosh(m\theta_\chi)
    \ge\tfrac12e^{m\theta_\chi}
    \ge\tfrac12e^{2m/\sqrt\chi},
\]
which proves the final inequality in~\eqref{eq:minimax-value}.
\hfill$\square$

\subsection{Proof of Proposition~\ref{prop:fixed-window-representation}}

The identity holds for $t=0$.  Since
$P_1^\chi(\lambda)=z_\chi(\lambda)/z_0$, it also holds for $t=1$.
Assume it holds for $t$ and $t-1$.  The Chebyshev recurrence and the
closed forms of $a_t,c_t$ in Table~\ref{tab:notation} give
\begin{align*}
P_{t+1}^\chi(\lambda)
&=
\frac{2\ChebT_t(z_0)}{\ChebT_{t+1}(z_0)}
 z_\chi(\lambda)P_t^\chi(\lambda)
-
\frac{\ChebT_{t-1}(z_0)}{\ChebT_{t+1}(z_0)}
 P_{t-1}^\chi(\lambda)\\
&=a_tz_\chi(\lambda)P_t^\chi(\lambda)-c_tP_{t-1}^\chi(\lambda).
\end{align*}
Substitution of the induction hypothesis into
\eqref{eq:moving-window-recurrence} with $\bLap_{k+t}\equiv\bLap$ yields
$\boldsymbol v_{t+1}=P_{t+1}^\chi(\bLap)\boldsymbol u$.

\hfill$\square$

\subsection{Proof of Lemma~\ref{lem:window}}

Set
\[
    s:=\chi^{-1/2},
    \qquad
    x:=m\theta_\chi
      =2m\operatorname{artanh}s,
    \qquad
    h:=\frac{m^2}{\chi}.
\]
Proposition~\ref{prop:cheb-minimax} gives
\[
    \max_{\lambda\in[\chi^{-1},1]}|P_m^\chi(\lambda)|
    =\operatorname{sech}x.
\]
Since \(\operatorname{artanh}s\ge s\), one has \(x\ge2\sqrt h\).
For the upper range, \(s\le1/2\), \(ms\le\sqrt2\), and the function
\(\operatorname{artanh}(s)/s\) is increasing on \((0,1)\).  Therefore
\[
    x
    =2(ms)\frac{\operatorname{artanh}s}{s}
    \le2\sqrt2\ln3
    <\frac{25}{8}.
\]
The last strict inequality follows, for example, from
\(\sqrt2<99/70\) and \(\ln3<11/10\).

We next record a scalar estimate valid for
\(0\le x\le25/8\):
\begin{equation}\label{eq:logcosh-finite-range}
    \ln\cosh x\ge\frac{x^2}{5}.
\end{equation}
Indeed, let \(F(x)=\ln\cosh x-x^2/5\) and
\(G(x)=F'(x)=\tanh x-2x/5\).  Since
\(G'(x)=\operatorname{sech}^2x-2/5\) is strictly decreasing on
\((0,\infty)\), \(G\) can change sign from positive to negative at most
once.  Thus the minimum of \(F\) on \([0,25/8]\) occurs at an endpoint.
Now \(F(0)=0\), while
\[
 F(25/8)
 \ge\frac{25}{8}-\ln2-\frac{125}{64}
 >\frac{25}{8}-\frac7{10}-\frac{125}{64}>0.
\]
This proves \eqref{eq:logcosh-finite-range}.  The rational logarithm
bounds used above follow directly from the positive exponential series.

Combining \(x^2\ge4h\) with
\eqref{eq:logcosh-finite-range} yields
\[
    \operatorname{sech}x
    =\exp(-\ln\cosh x)
    \le\exp\left(-\frac{x^2}{5}\right)
    \le\exp\left(-\frac{4m^2}{5\chi}\right)
    \le\exp\left(-\frac{m^2}{2\chi}\right).
\]
This is \eqref{eq:window-contr}.
\hfill$\square$

\section{Proofs for the metric-drift results}
\label{app:metric-proofs}

We first establish the exact variation-of-constants decomposition and
the cumulative response bounds, then prove the direct metric-drift window
estimate. All operators below act on the disagreement subspace of
$\R^{nd}$.

Let $\ChebU_r$ denote the Chebyshev polynomial of the second kind.  For a
horizon $m$, define
\[
 \boldsymbol\xi_0:=-\frac{2}{(1-\chi^{-1})z_0}\boldsymbol E_0\boldsymbol e_0,
 \qquad
 \boldsymbol\xi_t:=-\frac{2a_t}{1-\chi^{-1}}\boldsymbol E_t\boldsymbol e_t
 \quad(t\ge1),
\]
and
\[
 \Response_{m,j}^\chi(\bLap_{k})
 :=\frac{\ChebT_j(z_0)}{\ChebT_m(z_0)}
 \ChebU_{m-j}(z_\chi(\bLap_{k})).
\]
Also set $\theta_\chi:=\operatorname{arccosh}z_0
=2\operatorname{artanh}(\chi^{-1/2})$.
For scalar spectral arguments write
\[
 R_{m,j}^\chi(\lambda):=
 \frac{\ChebT_j(z_0)}{\ChebT_m(z_0)}
 \ChebU_{m-j}(z_\chi(\lambda)).
\]

\begin{lemma}[Variation of constants]\label{lem:voc}
Under the hypotheses of Lemma~\ref{lem:moving-window-stability},
\begin{equation}\label{eq:voc}
 \boldsymbol e_m=P_m^\chi(\bLap_{k})\boldsymbol e_0+
 \sum_{j=1}^m\Response_{m,j}^\chi(\bLap_{k})\boldsymbol\xi_{j-1},
\end{equation}
and $\norm{\boldsymbol\xi_t}\le8\norm{\boldsymbol E_t}
\norm{\boldsymbol e_t}$.
\end{lemma}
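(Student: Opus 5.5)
The plan is to rewrite the moving recurrence \eqref{eq:local-window-recurrence} as the fixed-reference recurrence plus a forcing term, and then solve the forced scalar three-term recurrence with the discrete Green's function of the Chebyshev recursion, which is $\ChebU$. Write $z_\chi(\bLap_{k+t})=z_\chi(\bLap_k)-\tfrac{2}{1-\chi^{-1}}\boldsymbol E_t$; this is immediate from the affine definition of $z_\chi$. For $t\ge1$ this gives
\[
\boldsymbol e_{t+1}=a_tz_\chi(\bLap_k)\boldsymbol e_t-c_t\boldsymbol e_{t-1}+\boldsymbol\xi_t .
\]
At $t=0$ we have $\boldsymbol e_1=z_0^{-1}z_\chi(\bLap_k)\boldsymbol e_0+\boldsymbol\xi_0$. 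Here $\boldsymbol\xi_0=0$ anyway, since $\boldsymbol E_0=0$. Next, normalize by setting $\boldsymbol w_t:=\ChebT_t(z_0)\boldsymbol e_t$ and $Z:=z_\chi(\bLap_k)$. Using $a_t=2\ChebT_t(z_0)/\ChebT_{t+1}(z_0)$ and $c_t=\ChebT_{t-1}(z_0)/\ChebT_{t+1}(z_0)$, the recurrence becomes the constant-coefficient inhomogeneous system
\[
\boldsymbol w_{t+1}=2Z\boldsymbol w_t-\boldsymbol w_{t-1}+\ChebT_{t+1}(z_0)\boldsymbol\xi_t,
\qquad
\boldsymbol w_1=Z\boldsymbol w_0+\ChebT_1(z_0)\boldsymbol\xi_0 .
\]

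Since all coefficients are now the single matrix $Z$, everything commutes within the homogeneous part. The homogeneous solution is $\ChebT_t(Z)\boldsymbol w_0$. A unit impulse $\boldsymbol g$ injected at step $j$ propagates as $\ChebU_{t-j}(Z)\boldsymbol g$ for $t\ge j$. To see this, note that $\ChebU_0=1$, that $\ChebU_1(Z)=2Z$ matches the two-term start (the value at $j-1$ being zero), and that $\ChebU$ obeys the same three-term recurrence. By induction on $t$, using linearity and superposition, I would prove
\[
\boldsymbol w_m=\ChebT_m(Z)\boldsymbol w_0+\sum_{j=1}^m\ChebT_j(z_0)\ChebU_{m-j}(Z)\boldsymbol\xi_{j-1}.
\]
The $j=1$ term is consistent with the modified first step, because $\boldsymbol w_1$ receives exactly $\ChebT_1(z_0)\boldsymbol\xi_0$. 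The matrix ordering is preserved: $\boldsymbol\xi_{j-1}$ already contains $\boldsymbol E_{j-1}\boldsymbol e_{j-1}$ applied to the actual iterate, and only polynomials in $Z$ act afterwards, so no commutation between $\bLap_k$ and the other operators is needed. Dividing by $\ChebT_m(z_0)$ gives \eqref{eq:voc} with $\Response^\chi_{m,j}$.

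For the norm bound, recall $a_t=2\ChebT_t(z_0)/\ChebT_{t+1}(z_0)$. Since $z_0>1$, we have $\ChebT_{t+1}(z_0)\ge z_0\ChebT_t(z_0)\ge\ChebT_t(z_0)$, which gives $a_t\le 2$ (indeed $a_t\le2/z_0$). Together with $1-\chi^{-1}\ge 3/4$, using $\chi\ge4$, this yields $\|\boldsymbol\xi_t\|\le\frac{2\cdot 2}{3/4}\|\boldsymbol E_t\|\|\boldsymbol e_t\|\le 8\|\boldsymbol E_t\|\|\boldsymbol e_t\|$. The $t=0$ case is trivial. If $\chi\ge4$ is not granted here, the finer bound $a_t\le 2/z_0$ together with $z_0(1-\chi^{-1})=1+\chi^{-1}$ gives the constant $4$ directly.

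The main obstacle is bookkeeping at the boundary. The first step has a different coefficient ($z_0^{-1}$ rather than $a_0$), so I must check that the Green's-function indexing gives $\ChebT_j(z_0)$, not $\ChebT_{j+1}(z_0)$, as the weight of $\boldsymbol\xi_{j-1}$. I would verify this by checking $m=1$ and $m=2$ explicitly before running the induction.
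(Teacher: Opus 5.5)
Your proposal is correct and takes essentially the same route as the paper. Both arguments normalize by $\ChebT_t(z_0)$ to turn the moving recurrence into the constant-coefficient Chebyshev recurrence in $z_\chi(\bLap_k)$ driven by impulses $\ChebT_j(z_0)\boldsymbol\xi_{j-1}$, solve it by superposition with $\ChebU_{m-j}$ as the impulse response, and bound $\norm{\boldsymbol\xi_t}$ via $a_t\le2$ and $\chi^{-1}\le1/4$. Your remark that $a_t\le2/z_0$ together with $z_0(1-\chi^{-1})=1+\chi^{-1}$ gives the sharper constant $4$ without using $\chi\ge4$ is also valid.
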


\begin{lemma}[Response sums]\label{lem:response-bounds}
For $j=1,\ldots,m$ and $\lambda\in[\chi^{-1},1]$,
\begin{equation}\label{eq:response-pointwise}
 \left|\frac{\ChebT_j(z_0)}{\ChebT_m(z_0)}
 \ChebU_{m-j}(z_\chi(\lambda))\right|
 \le2(m-j+1)e^{-(m-j)\theta_\chi}.
\end{equation}
Moreover,
\begin{equation}\label{eq:response-sums}
 \sum_{j=1}^m\sup_{[\chi^{-1},1]}|R_{m,j}^\chi|
 \le2\min\left\{\frac{m(m+1)}2,
 \frac1{(1-e^{-\theta_\chi})^2}\right\},
\end{equation}
and
\begin{equation}\label{eq:response-weighted}
 \sum_{j=1}^m(j-1)\sup_{[\chi^{-1},1]}|R_{m,j}^\chi|
 \le2\min\left\{\frac{m^3}{6},
 \frac{m}{(1-e^{-\theta_\chi})^2}\right\}.
\end{equation}
\end{lemma}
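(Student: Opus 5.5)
The plan is to reduce everything to two scalar facts: a uniform bound on second-kind Chebyshev polynomials on $[-1,1]$, and the hyperbolic form of $\ChebT_j(z_0)$. After that, the sums are elementary.

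\emph{Pointwise bound.} For $\lambda\in[\chi^{-1},1]$ we have $x=z_\chi(\lambda)\in[-1,1]$. Writing $x=\cos\phi$ gives $\ChebU_r(x)=\sin((r+1)\phi)/\sin\phi$, and hence the classical bound $|\ChebU_r(x)|\le r+1$ (with the endpoints handled by continuity). Since $z_0=\cosh\theta_\chi$, as recorded in the proof of Proposition~\ref{prop:cheb-minimax}, we have $\ChebT_j(z_0)=\cosh(j\theta_\chi)$. Then
\[
\frac{\cosh(j\theta_\chi)}{\cosh(m\theta_\chi)}\le\frac{e^{j\theta_\chi}}{\tfrac12e^{m\theta_\chi}}=2e^{-(m-j)\theta_\chi},
\]
using $\cosh y\le e^{y}$ in the numerator and $\cosh y\ge\tfrac12e^{y}$ in the denominator, for $y\ge0$. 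Multiplying the two bounds gives \eqref{eq:response-pointwise}.

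\emph{Sums.} Substitute $r:=m-j\in\{0,\dots,m-1\}$ and write $q:=e^{-\theta_\chi}\in(0,1)$. Then
\[
\sum_{j=1}^m\sup|R^\chi_{m,j}|\le2\sum_{r=0}^{m-1}(r+1)q^r.
\]
This sum admits two bounds. Dropping $q^r\le1$ gives $2\cdot m(m+1)/2$. Extending to the infinite series gives $2\sum_{r\ge0}(r+1)q^r=2/(1-q)^2$. Together these prove \eqref{eq:response-sums}.

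For \eqref{eq:response-weighted}, note that $j-1=m-1-r$. On the first branch, set $q^r\le1$ and compute exactly:
\[
\sum_{r=0}^{m-1}(m-1-r)(r+1)=\sum_{i=1}^m(m-i)\,i=\frac{m(m^2-1)}6\le\frac{m^3}6 .
\]
On the second branch, use $j-1\le m$ and reuse the geometric-series bound, which gives $2m/(1-q)^2$.

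No step is a real obstacle. The only points requiring care are getting the factor $2$ from $\cosh(m\theta_\chi)\ge\tfrac12e^{m\theta_\chi}$ and checking the exact value of the discrete sum $\sum_i(m-i)i$ that gives the $m^3/6$ constant.
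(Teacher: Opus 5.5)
Your proposal is correct and takes essentially the same route as the paper. It bounds $|\ChebU_r(\cos\vartheta)|\le r+1$, bounds the ratio $\cosh(j\theta_\chi)/\cosh(m\theta_\chi)\le 2e^{-(m-j)\theta_\chi}$, and then gets the two branches of each sum from either $e^{-r\theta_\chi}\le1$ (with the exact identity $\sum_i i(m-i)=m(m^2-1)/6$) or the series $\sum_{r\ge0}(r+1)x^r=(1-x)^{-2}$ together with $j-1\le m$.
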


\subsection{Proofs of Lemmas~\ref{lem:voc}, \ref{lem:response-bounds},
and~\ref{lem:moving-window-stability}}

\emph{Identity.} To remove the time-dependent normalization in the
recurrence, define the auxiliary sequence
$\boldsymbol b_t:=\ChebT_t(z_0)\,\boldsymbol e_t$ and set
$\widehat{\boldsymbol Z}:=z_\chi(\bLap_{k})$. From
\eqref{eq:local-window-recurrence} and \eqref{eq:cheb-coeffs-at-glance}, for $t\ge1$,
\begin{equation*}
\begin{aligned}
\boldsymbol b_{t+1}
&=\ChebT_{t+1}(z_0)
  \bigl[a_t z_\chi(\bLap_{k+t})\boldsymbol e_t
  -c_t\boldsymbol e_{t-1}\bigr]
  =2\ChebT_t(z_0)\,z_\chi(\bLap_{k+t})\boldsymbol e_t
  -\ChebT_{t-1}(z_0)\boldsymbol e_{t-1}\\
&=2\widehat{\boldsymbol Z}\boldsymbol b_t-\boldsymbol b_{t-1}
  +\ChebT_{t+1}(z_0)\,\boldsymbol\xi_t.
\end{aligned}
\end{equation*}
because
$z_\chi(\bLap_{k+t})-z_\chi(\bLap_{k})
=-\tfrac{2}{1-\chi^{-1}}\boldsymbol E_t$. Moreover,
\begin{equation*}
\ChebT_{t+1}(z_0)\boldsymbol\xi_t
=-\frac{2a_t\ChebT_{t+1}(z_0)}{1-\chi^{-1}}
  \boldsymbol E_t\boldsymbol e_t
=-\frac{4\ChebT_t(z_0)}{1-\chi^{-1}}\boldsymbol E_t\boldsymbol e_t
=2\ChebT_t(z_0)\bigl(z_\chi(\bLap_{k+t})-\widehat{\boldsymbol Z}\bigr)
  \boldsymbol e_t.
\end{equation*}
Similarly,
$\boldsymbol b_1=z_0\boldsymbol e_1=z_\chi(\bLap_{k})\boldsymbol e_0
=\widehat{\boldsymbol Z}\boldsymbol b_0
+\ChebT_1(z_0)\boldsymbol\xi_0$.
Thus $(\boldsymbol b_t)$ solves the constant-coefficient Chebyshev recurrence
$\boldsymbol b_{t+1}=2\widehat{\boldsymbol Z}\boldsymbol b_t
-\boldsymbol b_{t-1}$ driven by the impulses
$\ChebT_j(z_0)\boldsymbol\xi_{j-1}$ added at times $j=1,\dots,m$. The
homogeneous solution with data $\boldsymbol b_0=\boldsymbol e_0$,
$\boldsymbol b_1=\widehat{\boldsymbol Z}\boldsymbol b_0$ is
$\boldsymbol b_t^{\mathrm h}=\ChebT_t(\widehat{\boldsymbol Z})
\boldsymbol e_0$ (the initial conditions of
$\ChebT_t$). The response at time $m$ to a unit impulse $\boldsymbol w$ added at
time $j$ is $\ChebU_{m-j}(\widehat{\boldsymbol Z})\boldsymbol w$, since the sequence
$\ChebU_{t-j}(\widehat{\boldsymbol Z})\boldsymbol w$ satisfies the homogeneous recurrence for
$t>j$ and takes the values $0$ and $\boldsymbol w$ at times $j-1$ and $j$,
respectively ($\ChebU_0=1$, $\ChebU_1(z)=2z$). Superposition gives
$\boldsymbol b_m=\ChebT_m(\widehat{\boldsymbol Z})\boldsymbol e_0
+\sum_{j=1}^{m}\ChebU_{m-j}(\widehat{\boldsymbol Z})
\ChebT_j(z_0)\boldsymbol\xi_{j-1}$; dividing by $\ChebT_m(z_0)$ yields
\eqref{eq:voc}. The same derivation with horizon $t\le m$ in place of
$m$ gives the partial-horizon identity used in the bootstrap below.

\emph{Norm of the impulses.} For $t\ge1$,
$\norm{\boldsymbol\xi_t}\le\tfrac{2a_t}{1-\chi^{-1}}
\norm{\boldsymbol E_t}\norm{\boldsymbol e_t}
\le\tfrac{4}{1-\chi^{-1}}\norm{\boldsymbol E_t}\norm{\boldsymbol e_t}
\le8\norm{\boldsymbol E_t}\norm{\boldsymbol e_t}$
($a_t\le2$, $\chi^{-1}\le\tfrac14$); for $t=0$,
$\tfrac{2}{(1-\chi^{-1})z_0}\le\tfrac{2}{1-\chi^{-1}}\le4\le8$.

\emph{Bounds on the response operators.}
For every $\lambda\in[\chi^{-1},1]$,
$z_\chi(\lambda)\in[-1,1]$ and, for every integer $r\ge0$,
\[
 |\ChebU_r(\cos\vartheta)|
 =\left|\frac{\sin((r+1)\vartheta)}{\sin\vartheta}\right|
 \le r+1.
\]
At $\vartheta=0$ or $\pi$, the quotient is understood by continuity.
Taking $r=m-j$ and using
\[
 \frac{\ChebT_j(z_0)}{\ChebT_m(z_0)}
 =\frac{\cosh(j\theta_\chi)}{\cosh(m\theta_\chi)}
 \le\frac{e^{j\theta_\chi}}{\tfrac12e^{m\theta_\chi}}
 =2e^{-(m-j)\theta_\chi}
\]
gives \eqref{eq:response-pointwise}. Summing with $r=m-j$,
\begin{multline*}
\sum_{j=1}^{m}\sup_{[\chi^{-1},1]}|R_{m,j}^\chi|
\le2\sum_{r=0}^{m-1}(r+1)e^{-r\theta_\chi}\\
\le2\min\Bigl\{\frac{m(m+1)}{2},\ \sum_{r\ge0}(r+1)x^{r}\Big|_{x=
e^{-\theta_\chi}}\Bigr\}
=2\min\Bigl\{\frac{m(m+1)}{2},\frac1{(1-e^{-\theta_\chi})^{2}}\Bigr\},
\end{multline*}
which is \eqref{eq:response-sums}.  For \eqref{eq:response-weighted}, bounding
$e^{-(m-j)\theta_\chi}\le1$ and using
$\sum_{j=1}^{m}(j-1)(m-j+1)=\sum_{i=1}^{m-1}i(m-i)
=\tfrac{m(m-1)(m+1)}{6}\le\tfrac{m^{3}}{6}$ gives the first branch;
bounding $(j-1)\le m$ and reusing the geometric sum gives the second.

\hfill$\square$

\emph{Proof of Lemma~\ref{lem:moving-window-stability}.}
Apply the partial-horizon form of \eqref{eq:voc} with horizon $t$. The
term with $j=1$ vanishes because $\boldsymbol E_0=0$. For $j\ge2$, the identity
$\ChebT_j(z_0)a_{j-1}=2\ChebT_{j-1}(z_0)$ gives
\[
 \Response_{t,j}^\chi(\bLap_{k})\boldsymbol\xi_{j-1}
 =-\frac{4}{1-\chi^{-1}}\,
 \frac{\ChebT_{j-1}(z_0)}{\ChebT_t(z_0)}\,
 \ChebU_{t-j}\bigl(z_\chi(\bLap_{k})\bigr)
 \boldsymbol E_{j-1}\boldsymbol e_{j-1}.
\]
On the disagreement subspace one has
$\norm{\ChebU_{t-j}(z_\chi(\bLap_{k}))}\le t-j+1$, and
$\norm{\boldsymbol E_{j-1}}\le\beta(j-1)$,
$\norm{\boldsymbol e_{j-1}}\le M_t$. Hence
\begin{equation}\label{eq:stability-sum}
 \norm{\boldsymbol e_t-P_t^\chi(\bLap_{k})\boldsymbol e_0}
 \le\frac{4\beta M_t}{1-\chi^{-1}}
 \sum_{j=2}^{t}\frac{\ChebT_{j-1}(z_0)}{\ChebT_t(z_0)}(j-1)(t-j+1).
\end{equation}

\emph{Short-window branch.} Since
$\ChebT_{i+1}(z_0)=2z_0\ChebT_i(z_0)-\ChebT_{i-1}(z_0)\ge
z_0\ChebT_i(z_0)$ for $i\ge1$ and $\ChebT_1(z_0)=z_0\ChebT_0(z_0)$, the
ratio in \eqref{eq:stability-sum} is at most $z_0^{-1}$ for $j\le t$.
Moreover $(1-\chi^{-1})z_0=1+\chi^{-1}$ and
$\sum_{j=2}^{t}(j-1)(t-j+1)=(t^3-t)/6$, so the right-hand side of
\eqref{eq:stability-sum} is at most
\[
 \frac{4}{1+\chi^{-1}}\cdot\frac{t^3-t}{6}\,\beta M_t
 \le\frac23\beta(t^3-t)M_t.
\]

\emph{Damped branch.} Since
$\ChebT_{j-1}(z_0)/\ChebT_t(z_0)=\cosh((j-1)\theta_\chi)/\cosh(t\theta_\chi)
\le2e^{-(t-j+1)\theta_\chi}$ and $j-1\le t$, the sum in
\eqref{eq:stability-sum} is at most
\[
 2t\sum_{r\ge1}re^{-r\theta_\chi}
 =\frac{t}{2\sinh^2(\theta_\chi/2)}
 =\frac{t(\chi-1)}2,
\]
where $\sinh^2(\theta_\chi/2)=1/(\chi-1)$.
Multiplying by $4\beta M_t/(1-\chi^{-1})$ gives $2\beta t\chi M_t$.
Taking the better of the two estimates gives the asserted perturbation bound.

Because $\norm{P_t^\chi(\bLap_{k})}\le1$ and $D_t\le D_m$ for
$t\le m$, this bound implies
$M_m\le\norm{\boldsymbol e_0}+D_mM_m$.  If $D_m<1$, then
$M_m\le\norm{\boldsymbol e_0}/(1-D_m)$.  Applying
the perturbation bound once more at $t=m$ and using
$\norm{P_m^\chi(\bLap_{k})}\le\rho_m^\chi$ proves
\eqref{eq:scalar-window-contraction}.
\hfill$\square$

\subsection{Proof of Theorem~\ref{thm:metric-window}}

If $m=1$, then
\[
 P_1^\chi(\lambda)=1-\frac{2\lambda}{1+\chi^{-1}},
 \qquad
 \max_{[\chi^{-1},1]}|P_1^\chi|
 =\frac{\chi-1}{\chi+1}
 \le e^{-2/\chi}\le e^{-1/(5\chi)},
\]
since $\ln((\chi+1)/(\chi-1))=2\operatorname{artanh}(\chi^{-1})\ge2/\chi$.
This calculation uses only the current operator and is valid for arbitrary
drift.

Now let $m\ge2$ and let $h=m^2/\chi\in(0,1]$.  The conditions of the
theorem and Lemma~\ref{lem:moving-window-stability} give
\[
 D_m\le\frac23\beta m^3
 =\frac23(\beta m\chi)h\le\frac{2h}{9}\le\frac29.
\]
Therefore \eqref{eq:scalar-window-contraction} and
Lemma~\ref{lem:window} imply
\[
 \norm{\boldsymbol e_m}
 \le\left(e^{-4h/5}+\frac{2h/9}{1-2h/9}\right)\norm{\boldsymbol e_0}
 \le\left(e^{-4h/5}+\frac{2h}{7}\right)\norm{\boldsymbol e_0}.
\]
For $0<h\le1$, write $A=e^{-h/5}$ and $B=e^{-4h/5}$.
The inequality $e^x\ge1+x$ gives $A\ge4/5$ and
$A\ge B(1+3h/5)$. Consequently,
\[
 A-B\ge\frac{3hA}{5+3h}\ge\frac{3h}{10}>\frac{2h}{7}.
\]
Hence \eqref{eq:window-guarantee} holds in both cases.

\emph{Complexity.}
Each complete window earns at least \(m^2/(5\chi)\) nats.  Hence
\[
 J
 =
 \left\lceil
 \frac{5\chi}{m^2}\ln\frac1\eps
 \right\rceil
\]
windows suffice.  Since \(K=Jm\) and
\(\lceil a\rceil m\le am+m\), this gives
the communication bound stated in Theorem~\ref{thm:metric-window}.
\hfill$\square$

\subsection{Proof of Theorem~\ref{thm:main-drift} and
Corollary~\ref{cor:small-drift}}

Let $\widetilde m_\beta,m_\beta$ be given by
\eqref{eq:scale-of-beta}. If $m_\beta\ge2$, then
$m_\beta\le\widetilde m_\beta\le\sqrt\chi$ and
$\beta m_\beta\chi\le1/3$, so Theorem~\ref{thm:metric-window} applies.
If $m_\beta=1$, its degree-one case applies. Thus every complete window
earns $m_\beta^2/(5\chi)$ nats, and the communication bound in
Theorem~\ref{thm:metric-window} holds with
$m=m_\beta$.

If $\widetilde m_\beta\ge2$, then
$m_\beta=\lfloor\widetilde m_\beta\rfloor
\ge\widetilde m_\beta/2$ and
\[
 \frac{\chi}{m_\beta}
 \le2\frac{\chi}{\widetilde m_\beta}
 =2\max\{\sqrt\chi,3\beta\chi^2\}.
\]
Moreover, $\widetilde m_\beta\ge2$ implies
\(\beta\chi^2\le\chi/6<\chi\), so the second term is exactly the
unsaturated quantity in \(\min\{\beta\chi^2,\chi\}\).  Together with
$m_\beta\le\sqrt\chi$, this bounds that communication estimate by an absolute multiple
of the right-hand side of
\eqref{eq:main-drift-bound}.

If $\widetilde m_\beta<2$, then $m_\beta=1$. Because $\chi\ge4$, this can occur only when
$(3\beta\chi)^{-1}<2$, hence
$\beta\chi^2>\chi/6$.  If $\beta\le1/\chi$, then
$\min\{\beta\chi^2,\chi\}=\beta\chi^2>\chi/6$; if
$\beta>1/\chi$, the minimum equals $\chi$.  In either case the
degree-one cost $5\chi\ln(1/\eps)+1$ is bounded by the claimed right-hand
side after enlarging one absolute constant.

Finally, if $\beta\le(3\chi^{3/2})^{-1}$, then the first argument in
the minimum defining $\widetilde m_\beta$ is active, so
$m_\beta=\lfloor\sqrt\chi\rfloor\ge\sqrt\chi/2$. The communication
estimate in Theorem~\ref{thm:metric-window} then gives
$K\le(5\chi/m_\beta)\ln(1/\eps)+m_\beta\le10\sqrt\chi\ln(1/\eps)+\sqrt\chi$,
proving Corollary~\ref{cor:small-drift}.
\hfill$\square$

\section{Piecewise-constant networks}
\label{app:dwell-proofs}

\subsection{Detector timing and scheduler convention}

Before round $k\ge1$, the external detector reports whether
$\Lap_k\ne\Lap_{k-1}$. If it reports a change, the current window
attempt ends before round $k$, and round $k$ is the first round of
a new length-one attempt. A change notification before round $k$ is processed only once: after it resets the attempted length, the new attempt executes round $k$ using $\Lap_k$ and checks for further changes only before subsequent rounds. Thus no communication round is skipped or counted twice. 

An attempt of length $m$ starting at round $k$ completes if no change
is reported before rounds $k+1,\ldots,k+m-1$. A change reported
before round $k+m$ does not erase a window that has already
completed; it resets the next attempted length to one. If no change
is reported, the next attempted length is
$\min\{2m,m_{\max}\}$, where
\[
m_{\max}=\lfloor\sqrt\chi\rfloor,
\qquad
\frac{\sqrt\chi}{2}\le m_{\max}\le\sqrt\chi.
\]

Every window, complete or interrupted, earns the credit
$\ln\ChebT_h(z_0)$, where $h$ is its number of executed rounds. By
Lemma~\ref{lem:window}, every window of length $1\le h\le m_{\max}$ earns at
least $4h^2/(5\chi)$; the bounds below use only completed windows and
discard the nonnegative credit of interrupted ones.

The first $t$ iterations of a window, $t\ge1$, use a single
operator $\Lap_k$. By Proposition~\ref{prop:fixed-window-representation}, they apply
$P_t^\chi(\bLap_k)$ to the stacked state. The map $z_\chi$ in
\eqref{eq:chebyshev-residual} sends $[\chi^{-1},1]$ onto $[-1,1]$, while $z_0>1$.
For $s=\cos\theta\in[-1,1]$ one has
$\ChebT_t(s)=\cos(t\theta)$, whereas
$\ChebT_t(z_0)=\cosh(t\operatorname{arccosh}z_0)\ge1$.
Thus the definition~\eqref{eq:chebyshev-residual} gives
\[
\begin{aligned}
\max_{\lambda\in[\chi^{-1},1]}|P_t^\chi(\lambda)|
&=\frac{\max_{s\in[-1,1]}|\ChebT_t(s)|}{\ChebT_t(z_0)}\\
&=\frac1{\cosh(t\operatorname{arccosh}z_0)}\le1.
\end{aligned}
\]
By the spectral theorem and the uniform spectral bound,
$\norm{\bprojperp P_t^\chi(\bLap_k)\boldsymbol u}
\le\norm{\bprojperp\boldsymbol u}$ for every stacked state
$\boldsymbol u$. The same argument applies to a window interrupted by a
change and to the state observed after any prescribed number of completed
rounds; zero completed rounds leave the state unchanged. Since the norm
after $t$ iterations is at most
$1/\ChebT_t(z_0)=e^{-\ln\ChebT_t(z_0)}$, if $q$ is the progress counter in
Algorithm~\ref{alg:wave}, the disagreement norm at every window boundary
is at most $e^{-q}$ times its value at the beginning of the call.

\subsection{Accumulated progress in a stable segment}

Consider $r\ge1$ consecutive rounds with a fixed operator, beginning
with a length-one attempt. Completed window lengths double until
they reach $m_{\max}$; subsequent completed windows have length
$m_{\max}$. We bound the progress credited within these $r$ rounds.

Let $m$ be the largest window length completed by round $r$.
If $m<m_{\max}$, the completed lengths through $m$ sum to less
than $2m$, and the next attempted length is at most $2m$.
Since that attempt has not completed by round $r$, we have
$r<4m$, and therefore $m\ge(r+1)/4$. If $m=m_{\max}$,
the bound is immediate. Thus
$m\ge\min\{(r+1)/4,m_{\max}\}$. This single completed window
earns at least $4m^2/(5\chi)$, so the total credited progress is at least
\begin{equation}\label{eq:dyadic-segment-progress}
\frac{4m^2}{5\chi}
\ge
\frac{4}{5\chi}
\min\left\{\frac{r+1}{4},m_{\max}\right\}^{2}.
\end{equation}

For $r\ge4m_{\max}$, we also count completed windows of length
$m_{\max}$. The shorter windows preceding the first such window
use
$2^{\lceil\log_2m_{\max}\rceil}-1<2m_{\max}$ rounds.
Consequently, at least
$\lfloor(r-2m_{\max})/m_{\max}\rfloor
\ge r/(5m_{\max})$
windows of length $m_{\max}$ complete by round $r$.
Each earns at least $4m_{\max}^2/(5\chi)$, giving total credited
progress of at least
\begin{equation}\label{eq:long-segment-rate}
\frac{r}{5m_{\max}}
\cdot\frac{4m_{\max}^2}{5\chi}
=
\frac{4rm_{\max}}{25\chi}.
\end{equation}

For $r<4m_{\max}$, \eqref{eq:dyadic-segment-progress} is at least
$r\min\{r,m_{\max}\}/(20\chi)$: if $(r+1)/4\le m_{\max}$, it is at least
$r^2/(20\chi)$; otherwise it is at least $4m_{\max}^2/(5\chi)$, which is at
least $rm_{\max}/(5\chi)$ because $r<4m_{\max}$.
For $r\ge4m_{\max}$,
\eqref{eq:long-segment-rate} gives a stronger bound.
Hence every such segment of $r$ rounds earns at least
\begin{equation}\label{eq:dwell-main-segment-credit}
\frac{r\min\{r,m_{\max}\}}{20\chi}.
\end{equation}

\subsection{Minimum spacing and arbitrary call starts}

\begin{lemma}[Deterministic doubling from an arbitrary start]
\label{lem:dwell-shifted-start}
Under the hypotheses of Theorem~\ref{thm:dwell-det}, let a fresh
\WAVE{} call start at an arbitrary global communication round, reset
its attempted length to one, and thereafter reset at every detected
change. There is an absolute constant $C_{\rm call}$ such that, for
every $\delta\in(0,1)$, its finishing round $k^+$ satisfies
\[
k^+-k\le C_{\rm call}
\left(\sqrt\chi+\frac{\chi}{\tau}\right)
\ln\frac e\delta.
\]
The scheduler does not require prior knowledge of $\tau$.
\end{lemma}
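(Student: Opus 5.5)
We will decompose the rounds used by the call into maximal stretches on which the operator is constant, and combine the segment credit bound \eqref{eq:dwell-main-segment-credit} over these stretches. Starting at round $k$, the call's rounds $k,k+1,\ldots,k^+-1$ are partitioned by the change points $t_i$ falling in $(k,k^+)$ into consecutive stretches $S_0,S_1,\ldots,S_p$. Every stretch begins with a length-one attempt: $S_0$ because the call resets its attempted length, and $S_i$ for $i\ge1$ because the detector reports the change before the first round of $S_i$. Stretches $S_1,\ldots,S_{p-1}$ are full inter-change intervals, so $r_i:=|S_i|\ge\tau$ by Assumption~\ref{ass:dwell-time}. Only $S_0$, which may start mid-interval, and $S_p$, which the call may cut off, can be shorter.

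To show the call terminates, we argue by contradiction. Suppose the call has run $N$ rounds without reaching $q\ge\ln(1/\delta)$. Lemma~\ref{lem:wave-progress-accounting} shows that the credited $q$ is at least the sum over stretches of the completed-window credit, and \eqref{eq:dwell-main-segment-credit} gives at least $r\min\{r,m_{\max}\}/(20\chi)$ per stretch of length $r$. The function $g(r):=r\min\{r,m_{\max}\}$ satisfies $g(r)\ge r\min\{\tau,m_{\max}\}$ whenever $r\ge\tau$. Hence the interior stretches contribute at least $\min\{\tau,m_{\max}\}/(20\chi)$ per round. Since $m_{\max}\ge\sqrt\chi/2$, this rate is at least
\[
\frac{1}{20\chi}\min\Bigl\{\tau,\tfrac{\sqrt\chi}{2}\Bigr\}\ge\frac{1}{40}\Bigl(\sqrt\chi+\frac{\chi}{\tau}\Bigr)^{-1}.
\]
This uses $\min\{a,b\}\ge(1/a+1/b)^{-1}$ with $a=\tau/\chi$ and $b=1/(2\sqrt\chi)$.

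The main obstacle is the two boundary stretches $S_0$ and $S_p$, which may have length below $\tau$ and so earn little credit. We will simply give up their credit and bound their lengths directly. The final stretch $S_p$ needs no credit: the call ends as soon as $q$ suffices, so only the rounds before it matter. For the first stretch $S_0$, we note that if $|S_0|<\tau$, it is followed by a full interval. Its cost is therefore at most $\tau$ rounds, which is dangerous when $\tau$ is huge, so we handle two cases.

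In the first case, $S_0$ is long or the run never changes operator. Then $S_0$ itself is a stretch of length $r$ beginning with a length-one attempt, and \eqref{eq:dwell-main-segment-credit} (with \eqref{eq:long-segment-rate} for $r\ge4m_{\max}$) shows that $r\ge 4m_{\max}+25\chi\ln(1/\delta)/(4m_{\max})=O(\sqrt\chi\ln(e/\delta))$ rounds suffice. The same bound applies to any single stretch, so neither boundary stretch can exceed $O(\sqrt\chi\ln(e/\delta))$ rounds before the call finishes. In the second case, once $S_0$ ends, each subsequent full stretch earns credit at the per-round rate above. Summing the bounds, the number of rounds is at most
\[
O\bigl(\sqrt\chi\ln(e/\delta)\bigr)+40\Bigl(\sqrt\chi+\frac{\chi}{\tau}\Bigr)\ln\frac1\delta+O\bigl(\sqrt\chi\ln(e/\delta)\bigr).
\]
Here the middle term counts the full interior stretches, and the two outer terms bound $S_0$ and the final partial stretch $S_p$. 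The constant $C_{\rm call}$ is then explicit (about $50$ after absorbing constants).

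The scheduler never uses $\tau$: the resets come from the detector and the doubling rule, and $\tau$ enters only the analysis.
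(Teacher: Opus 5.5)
Your proof is correct and follows essentially the paper's argument: partition the call's rounds at change points, use \eqref{eq:dwell-main-segment-credit} to get the per-round rate $\min\{\tau,m_{\max}\}/(20\chi)$ on the full interior intervals, and treat the at most two boundary pieces separately. The only variation is in that last step. The paper runs the call hypothetically without its stopping test and credits each boundary piece with at least $a_\tau(r-4m_{\max})$, which gives a $\delta$-independent overhead of $8m_{\max}$ rounds. You instead cap each boundary piece by the time $O(\sqrt\chi\ln(e/\delta))$ that a single fresh stretch needs to reach $\ln(1/\delta)$ on its own; this is cruder but still within the claimed bound.
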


\begin{proof}
Continue the call hypothetically without its stopping test. Let
$q_K$ be the value of the certified progress counter $q$ after its
first $K$ rounds, counting only windows completed by then. Set
\[
a_\tau:=\frac1{80}
\min\left\{\frac{\tau}{\chi},\frac1{m_{\max}}\right\}.
\]
On any complete interval between changes of length $r\ge\tau$,
\eqref{eq:dwell-main-segment-credit} gives an average credited
progress per round of at least
\[
\frac{\min\{r,m_{\max}\}}{20\chi}
\ge
\frac{\min\{\tau,m_{\max}\}}{20\chi}
\ge a_\tau.
\]
The last inequality uses $m_{\max}^2\ge\chi/4$.

A piece of length $r$ at either boundary of the call may be shorter
than $\tau$. If $r<4m_{\max}$, its credited progress is at least
$0\ge a_\tau(r-4m_{\max})$. If $r\ge4m_{\max}$,
\eqref{eq:long-segment-rate} gives credited progress of at least
$4rm_{\max}/(25\chi)\ge a_\tau r$. Thus every boundary piece
contributes at least $a_\tau(r-4m_{\max})$.

Partition the first $K$ call rounds at their change boundaries.
All interior pieces are complete intervals between changes, and at most the
first and last pieces are boundary pieces. If these
coincide, count their overhead only once. Summing the preceding
bounds gives
\[
q_K\ge a_\tau\max\{K-8m_{\max},0\}.
\]
Consequently, the stopping threshold $L=\ln(1/\delta)$ has been
reached by
$K_0=\lceil8m_{\max}+L/a_\tau\rceil$ rounds. Since
$a_\tau^{-1}=80\max\{\chi/\tau,m_{\max}\}$ and
$m_{\max}\le\sqrt\chi$, the claimed estimate follows from
$L+1=\ln(e/\delta)$.
\end{proof}

\begin{proof}[Proof of Theorem~\ref{thm:dwell-det}]
Apply Lemma~\ref{lem:dwell-shifted-start} at $k=0$ with
$\delta=\eps$. This gives~\eqref{eq:dwell-det-bound} after
renaming an absolute constant.
\end{proof}

\subsection{Memoryless change times}

This is a stochastic variant of the piecewise-constant model. The
operator is fixed between reported change points, but the segment lengths
are geometric rather than bounded below by a deterministic $\tau$.

\begin{assumption}[Memoryless change times]
\label{ass:memoryless-changes}
Let $(B_k)_{k\geq0}$ be independent Bernoulli variables, independent of
the past, with $\Prob(B_k=1)=1/\bar\tau$, where $\bar\tau\geq1$. If
$B_k=0$, then $\Lap_{k+1}=\Lap_k$. If $B_k=1$, the detector reports a
new segment boundary and a different normalized gossip matrix satisfying
Assumption~\ref{ass:uniform-spectrum} may be selected as a function of
the past. Thus only the change times are memoryless.
\end{assumption}

\begin{theorem}[Consensus under memoryless change times]
\label{thm:dwell-rand}
Let Assumptions~\ref{ass:uniform-spectrum} and
\ref{ass:memoryless-changes} hold with $\chi\geq4$, and assume access
to the ideal change detector. For $\eps\in(0,1)$, run
Algorithm~\ref{alg:wave} on $(\boldsymbol u_0,0;\eps)$ with
$\mathcal M=\mathrm{piecewise}$, and let
$(\widehat{\boldsymbol u},K)$ be its output. Then $K<\infty$ almost
surely, $\widehat{\boldsymbol u}$ is an $\eps$-consensus solution almost
surely, and
\begin{equation}
\label{eq:dwell-rand-bound}
 \E K
 =O\!\left(
 \left(\sqrt\chi+\frac{\chi}{\bar\tau}\right)
 \ln\frac{e}{\eps}
 \right).
\end{equation}
\end{theorem}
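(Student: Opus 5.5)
Correctness is the easy half and I would settle it first. Lemma~\ref{lem:wave-progress-accounting} and the scheduler discussion in this appendix do not use the change times at all: every window, complete or interrupted, runs on one operator, preserves the mean, and has norm at most $e^{-\ln\ChebT_h(z_0)}$ on the disagreement subspace. So on the event $\{K<\infty\}$ the output is an $\eps$-consensus solution pathwise. It then remains to bound $\E K$, which also gives $K<\infty$ almost surely.

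For the expectation, I will reuse the pathwise credit bound from the proof of Lemma~\ref{lem:dwell-shifted-start}. Run the algorithm hypothetically without its stopping test, and let $q_N$ be the credit earned in the first $N$ rounds. Partition $[0,N)$ into maximal constant segments of lengths $r_1,\dots,r_J$. Bound \eqref{eq:dwell-main-segment-credit} applies to every complete segment, and since the scheduler resets at each change, each segment starts with a length-one attempt. The same boundary argument as before then gives
\[
q_N\ \ge\ \sum_{i} \frac{r_i\min\{r_i,m_{\max}\}}{20\chi}\;-\;(\text{boundary loss}\le C m_{\max}^2/\chi).
\]
A cleaner route is to lower bound, per round, a credit contribution depending only on the current segment length. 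Writing $g(r)=r\min\{r,m_{\max}\}/(20\chi)$, I get $q_N\ge\sum_i g(r_i)$ over the completed segments, with the last segment handled as before.

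The key probabilistic step treats the segment lengths as i.i.d.\ $\mathrm{Geom}(1/\bar\tau)$. The change process is independent of the past, even though the matrices are chosen adaptively, and the credit depends only on the lengths. I would then show that $\E g(T)\ge c\,\E T\cdot\min\{\bar\tau,m_{\max}\}/\chi$. Indeed $\Prob(T\ge\bar\tau/2)\ge c$, and on that event $g(T)\ge T\min\{\bar\tau/2,m_{\max}\}/(20\chi)$; one also checks that $\E[T\,\ones\{T\ge \bar\tau/2\}]\ge c\bar\tau$. This makes the credit rate per segment, relative to rounds used, at least $a:=c\min\{1/m_{\max},\bar\tau/\chi\}^{-1}$-inverse, that is, $a\asymp 1/(\sqrt\chi+\chi/\bar\tau)$.

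To turn this into a stopping time bound, let $S_J=\sum_{i\le J}T_i$ and $Q_J=\sum_{i\le J}g(T_i)$, and let $J^*$ be the first $J$ with $Q_J\ge L:=\ln(1/\eps)$. Then $K\le S_{J^*}$. Wald's identity gives $\E S_{J^*}=\bar\tau\,\E J^*$. Applying Wald to $Q$, together with $g(T)\le T m_{\max}/(20\chi)$, bounds the overshoot of $Q_{J^*}$ by $L+\E g(T)$-type terms. Care is needed because $g$ can be large; I would truncate with $\tilde g=\min\{g(T),1\}$ to control the overshoot. This yields $\E J^*\le (L+1)/\E\tilde g(T)$, and hence $\E K\le\bar\tau(L+1)/\E\tilde g(T)$. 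Finally, $\E\tilde g(T)\gtrsim\bar\tau\min\{\bar\tau,m_{\max}\}/\chi\wedge 1$, which gives $\E K\lesssim(\chi/\bar\tau+\sqrt\chi+\bar\tau)\ln(e/\eps)$.

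I expect the main obstacle to be the spurious $\bar\tau$ term that appears when $\bar\tau\gg\sqrt\chi$: the truncation wastes a long segment. I would fix it by crediting long segments per block of $m_{\max}$ rounds, using \eqref{eq:long-segment-rate}, instead of per segment. Equivalently, I would cut the time axis into deterministic blocks of length $B=\lceil\min\{\bar\tau,m_{\max}\}\rceil$ and use memorylessness: each block lies inside a single segment with probability at least $(1-1/\bar\tau)^{B}\ge e^{-2}$, and in that case it earns $\gtrsim B^2/\chi$ in expectation. A supermartingale or Wald argument over blocks then gives $\E K\lesssim B\cdot\chi/B^2\cdot\ln(e/\eps)=(\chi/B)\ln(e/\eps)$, which is exactly \eqref{eq:dwell-rand-bound}. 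The technical point to verify in this block argument is that a block fully inside a segment earns credit even when the doubling schedule starts mid-segment. I would handle this by working with blocks of length $4B$, so that the dyadic ramp completes a window of length at least $B/4$.
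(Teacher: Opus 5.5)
Your route is essentially the paper's. You get correctness pathwise from Lemma~\ref{lem:wave-progress-accounting}. You run a Wald argument over the i.i.d.\ geometric segment lengths; every segment, including the first one of the call, starts with a length-one attempt, so \eqref{eq:dwell-main-segment-credit} applies with no boundary loss. For large $\bar\tau$ you switch to deterministic blocks with a ``clean'' indicator. The paper avoids your truncation by crediting each segment only with its largest completed window, $Y_i=\frac{4}{5\chi}\min\{(T_i+1)/4,m_{\max}\}^2\le 4/5$ from \eqref{eq:dyadic-segment-progress}; your $\min\{g(T),1\}$ does the same job. Your first argument already gives $\E K=O((\chi/\bar\tau+\sqrt\chi+\bar\tau)\ln(e/\eps))$, which settles every $\bar\tau\le 4m_{\max}$.

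The step that fails as written is the unified block argument with $B=\lceil\min\{\bar\tau,m_{\max}\}\rceil$, which you claim yields the whole bound. For $\bar\tau\ll m_{\max}$, $B$ does not control the attempt in progress at the start of a block. After an atypically long unchanged stretch, the attempted length can be as large as $m_{\max}\gg 4B$. If such an attempt begins near the start of a block, a clean block of $4B$ rounds completes no window at all. Your ``dyadic ramp'' remark handles attempts that are too short, not ones that are too long. So the deterministic lower bound ``credit $\ge b\,\mathbf{1}\{\text{block clean}\}$'', which an i.i.d.\ Wald argument over blocks requires, is false there. Moreover, for $\bar\tau$ close to $1$, the probability that a block of $4B\ge4$ rounds is clean tends to $0$. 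Repairing the unified version would require also crediting interrupted windows and bounding the delay before an in-progress window's credit is realized, which you do not do.

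The simple fix is the paper's case split. Keep your per-segment argument for $\bar\tau\le 4m_{\max}$, and use blocks of length $4m_{\max}$ only for $\bar\tau>4m_{\max}$. There every attempted length is at most $m_{\max}$, so a clean block always completes a window of length $m_{\max}$, earning at least $1/5$. A block is then clean with probability $(1-1/\bar\tau)^{4m_{\max}-1}\ge 1/4$.
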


In particular, $\bar\tau\geq\sqrt\chi$ recovers the fixed-network
$O(\sqrt\chi\ln(e/\eps))$ dependence in expectation.

\begin{proof}[Proof of Theorem~\ref{thm:dwell-rand}]
We bound the duration $D_\delta$ of a fresh certified call from any
starting round. Its disagreement hitting time is no larger than
$D_\delta$. For the analysis, continue the scheduler hypothetically after the call terminates, and let $q_K$ be the value of its certified progress counter after $K$ rounds of this continuation. All probabilities and expectations below may be
conditioned on the history at an almost surely finite stopping-time
start. Future change indicators remain independent Bernoulli
variables with probability $1/\bar\tau$; the new call starts with
a length-one attempt.

First, a deterministic bound holds for every switching sequence.
Partition any $K$ call rounds into constant-operator pieces of
lengths $r_i$. Each piece starts with a length-one attempt, so
\eqref{eq:dwell-main-segment-credit} and
$\min\{r_i,m_{\max}\}\ge1$ give
\begin{equation}\label{eq:dyadic-baseline}
q_K
\ge
\sum_i\frac{r_i\min\{r_i,m_{\max}\}}{20\chi}
\ge\frac{K}{20\chi}.
\end{equation}
Hence
$D_\delta\le\lceil20\chi\ln(1/\delta)\rceil
\le C_0\chi\ln(e/\delta)$,
which gives the required order when $1\le\bar\tau<2$.

Suppose $2\le\bar\tau\le4m_{\max}$. Let $T_i$ be successive
constant-operator segment lengths from the call start. By
memorylessness, the $T_i$ are i.i.d. geometric random variables with
mean $\bar\tau$, even conditional on the call-start history. Set
\[
Y_i:=
\frac{4}{5\chi}
\min\left\{\frac{T_i+1}{4},m_{\max}\right\}^{2},
\qquad
\mu_Y:=\mathbb E Y_i,
\qquad
L:=\ln\frac1\delta.
\]
By \eqref{eq:dyadic-segment-progress}, $Y_i$ is a lower bound on
the progress earned in segment $i$, and
$0\le Y_i\le4m_{\max}^2/(5\chi)\le4/5$. Moreover,
\[
\mathbb P(T_i\ge\lceil\bar\tau\rceil)
=(1-1/\bar\tau)^{\lceil\bar\tau\rceil-1}
\ge(1-1/\bar\tau)^{\bar\tau}
\ge\tfrac14.
\]
On this event, $\bar\tau\le4m_{\max}$ implies
$Y_i\ge\bar\tau^2/(20\chi)$. Therefore
\begin{equation}\label{eq:memoryless-short-mean-credit}
\mu_Y\ge\frac{\bar\tau^2}{80\chi}.
\end{equation}

Let $N:=\inf\{n\ge1:\sum_{i=1}^nY_i\ge L\}$.
For each integer $a\ge1$, the event $\{N\ge i\}$ depends only on
$Y_1,\ldots,Y_{i-1}$. Independence and the overshoot bound give
\[
\mu_Y\mathbb E(N\wedge a)
=
\mathbb E\sum_{i=1}^{N\wedge a}Y_i
\le L+\tfrac45.
\]
Monotone convergence and
\eqref{eq:memoryless-short-mean-credit} imply
$\mathbb E N\le80\chi(L+1)/\bar\tau^2<\infty$.
The certified call ends no later than the end of segment $N$.
Because $\{N\ge i\}$ is independent of $T_i$, Tonelli's theorem
gives
\[
\mathbb E D_\delta
\le\mathbb E\sum_{i=1}^N T_i
=\bar\tau\mathbb E N
\le\frac{80\chi}{\bar\tau}(L+1).
\]

Finally, suppose $\bar\tau>4m_{\max}$. Partition future call time
into disjoint blocks of $4m_{\max}$ rounds. Call a block clean if
no change is reported before its second through last rounds.
A change before its first round merely starts a new length-one
attempt there. The clean-block events involve disjoint change
indicators and are i.i.d., with probability
\[
p_{\rm cl}
=(1-1/\bar\tau)^{4m_{\max}-1}
\ge(1-1/\bar\tau)^{\bar\tau}
\ge\tfrac14.
\]
Within a clean block, the attempt in progress at its start finishes
after at most its attempted length. From that attempt through the
next completed window of length $m_{\max}$, the total number of
rounds is at most
$A+m_{\max}<3m_{\max}<4m_{\max}$, where
$A=2^{\lceil\log_2m_{\max}\rceil}-1$.
Thus a window of length $m_{\max}$ completes within the block,
regardless of the attempt state at its start. It earns at least
$b:=4m_{\max}^2/(5\chi)\ge1/5$.

For block $j$, put
$Z_j:=b\mathbf1\{\text{block $j$ is clean}\}$.
The variables $Z_j$ are i.i.d.,
$0\le Z_j\le4/5$, and
$\mathbb E Z_j=bp_{\rm cl}\ge1/20$.
The progress earned by the end of block $n$ is at least
$\sum_{j=1}^nZ_j$. If
$M:=\inf\{n\ge1:\sum_{j=1}^nZ_j\ge L\}$,
the same bounded stopping-time calculation gives
$\mathbb E M\le20(L+1)$. Consequently,
\[
\mathbb E D_\delta
\le4m_{\max}\mathbb E M
\le80m_{\max}\left(\ln\frac1\delta+1\right).
\]

Together with \eqref{eq:dyadic-baseline} and
$m_{\max}\le\sqrt\chi$, these cases give the conditional
$O((\sqrt\chi+\chi/\bar\tau)\ln(e/\delta))$ bound.
In particular, $D_\delta<\infty$ almost surely. The matrices
selected at change points may depend on the past: the argument
uses only the change indicators and progress bounds uniform over
admissible operators. Taking the call start at zero and
$\delta=\eps$ proves~\eqref{eq:dwell-rand-bound}.
\end{proof}

\subsection{Flooding, rollback, and committed outputs}
\label{app:flooding}

Throughout this subsection $n\ge2$, $\chi\ge4$, and
$D:=D_n=\lfloor(\sqrt\chi/2)\ln(2n)\rfloor+1$.
The additional local-observation and message assumptions are those stated
before Proposition~\ref{prop:flooding}. Global round indices and window
schedules are synchronized. Changes to rows, including changes in edge
weights or the disappearance of an edge, are observed before the new row
is used. No agent needs to know which other rows have changed.

\begin{lemma}[Diameter of the operator support]
\label{lem:diameter}
Let $\Lap$ be a normalized gossip matrix satisfying
$\spec(\Lap|_{\ones_n^\perp})\subseteq[\chi^{-1},1]$. Its undirected
nonzero-support graph, with edges $\{i,j\}$ whenever $i\ne j$ and
$\Lap_{ij}\ne0$, is connected and has diameter at most
\[
 \left\lfloor\frac{\operatorname{arccosh}(n)}{\theta_\chi}\right\rfloor+1
 \le D,\qquad
 \theta_\chi:=2\operatorname{artanh}(\chi^{-1/2}).
\]
\end{lemma}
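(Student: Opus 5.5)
The plan is to use the same polynomial argument sketched after Proposition~\ref{prop:flooding}, sharpened from the crude bound $2e^{-2m/\sqrt\chi}$ to the exact value $\operatorname{sech}(m\theta_\chi)$ of Proposition~\ref{prop:cheb-minimax}. Set $m:=\lfloor\operatorname{arccosh}(n)/\theta_\chi\rfloor+1$, so that $m\theta_\chi>\operatorname{arccosh}(n)$. Then $\cosh(m\theta_\chi)>n$.

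The first step bounds the residual. Write $P_m^\chi(\Lap)=\proj+P_m^\chi(\Lap)\proj_\perp$; this uses $P_m^\chi(0)=1$ and $\Lap\ones_n=0$. Both $\Lap$ and $P_m^\chi(\Lap)$ commute with $\proj$. By the spectral theorem and Assumption-type bound $\spec(\Lap|_{\ones_n^\perp})\subseteq[\chi^{-1},1]$, Proposition~\ref{prop:cheb-minimax} gives
\[
\norm{P_m^\chi(\Lap)-\proj}\le\rho_m^\chi=\operatorname{sech}(m\theta_\chi)<1/n .
\]
Entrywise, $|[P_m^\chi(\Lap)]_{ij}-1/n|<1/n$, so every entry of $P_m^\chi(\Lap)$ is strictly positive.

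The second step is combinatorial. For $i\ne j$, $(\Lap^r)_{ij}$ is a sum over walks of length $r$ in the support graph (self-loops allowed via diagonal entries). Hence $(\Lap^r)_{ij}=0$ whenever the graph distance between $i$ and $j$ exceeds $r$, with distance $+\infty$ if they lie in different components. A polynomial of degree at most $m$ in $\Lap$ therefore has a zero $(i,j)$ entry whenever $\operatorname{dist}(i,j)>m$. Since every entry of $P_m^\chi(\Lap)$ is positive, all pairs are at distance at most $m$. So the support graph is connected and its diameter is at most $m$. (Connectivity also follows directly from $\ker\Lap=\operatorname{span}\{\ones_n\}$, but the argument above gives it for free.)

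The remaining step is $m\le D$, which is where the constants must be checked. Use $\operatorname{arccosh}(n)=\ln(n+\sqrt{n^2-1})\le\ln(2n)$ and $\theta_\chi=2\operatorname{artanh}(\chi^{-1/2})\ge2\chi^{-1/2}$. Together these give $\operatorname{arccosh}(n)/\theta_\chi\le(\sqrt\chi/2)\ln(2n)$. Monotonicity of the floor then yields $m\le\lfloor(\sqrt\chi/2)\ln(2n)\rfloor+1=D$.

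No step is deep. The only care needed is to use the exact sech bound with strict inequality, so the entrywise positivity argument is rigorous. One should also note that $n\ge2$ makes $\operatorname{arccosh}(n)>0$, so the definitions are well posed.
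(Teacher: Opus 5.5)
Your proposal is correct and follows the paper's proof step for step. You use the same degree $\lfloor\operatorname{arccosh}(n)/\theta_\chi\rfloor+1$ residual $P_r^\chi(\Lap)$. You use the same bound $\norm{P_r^\chi(\Lap)-\proj}\le\operatorname{sech}(r\theta_\chi)<1/n$ to force entrywise positivity, the same walk-based vanishing of $(\Lap^j)_{ab}$ beyond graph distance $j$, and the same estimates $\theta_\chi\ge2/\sqrt\chi$ and $\operatorname{arccosh}(n)\le\ln(2n)$. The only cosmetic difference is connectivity: the paper derives it separately from $\ker\Lap=\operatorname{span}\{\ones_n\}$, while you obtain it from the positivity argument by treating different components as infinitely far apart, which is equally valid.
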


\begin{proof}
We use the polynomial diameter argument of \citet{ChungFaberManteuffel1994}.
If the support graph were disconnected, its component indicators would
give linearly independent kernel vectors, contradicting
$\ker\Lap=\operatorname{span}\{\ones_n\}$.
Put $r:=\lfloor\operatorname{arccosh}(n)/\theta_\chi\rfloor+1$ and
$M:=P_r^\chi(\Lap)$. Since $P_r^\chi(0)=1$, the spectral theorem and
the Chebyshev residual bound give
\[
 M=\proj+M(I_n-\proj),\qquad
 \|M(I_n-\proj)\|\le\frac1{\cosh(r\theta_\chi)}<\frac1n.
\]
Every entry of $M$ is therefore strictly positive. On the other hand,
$(\Lap^j)_{ab}=0$ whenever the support-graph distance from $a$ to $b$
exceeds $j$. As $M$ is a polynomial of degree $r$ in $\Lap$, positivity
forces every such distance to be at most $r$. Finally,
$\theta_\chi\ge2/\sqrt\chi$ and
$\operatorname{arccosh}(n)\le\ln(2n)$ imply $r\le D$.
\end{proof}

\paragraph{Change propagation.}
At a change time $t$, every agent whose row changes attaches $t$ to its
messages starting in round $t$. Agents forward the largest change index
they have learned. They do not alter the numerical schedule merely upon
learning an index. All agents process that index only at the common
boundary $t+D$ and only once. Since $\tau\ge2D$, the operator is fixed
through rounds $t,\ldots,t+D-1$. Its nonzero-support graph is consequently
fixed and is a subgraph of every available communication graph during
these rounds, even if additional unused links vary. Information traverses
one support edge per round, so Lemma~\ref{lem:diameter} ensures that every
agent knows $t$ before boundary $t+D$. No later change can overtake this
propagation. A single timestamp per message therefore suffices.

\paragraph{Checkpoints and synchronized rollback.}
For the numerical computation, the ordinary stopping test of
Algorithm~\ref{alg:wave} is temporarily disabled: window attempts continue
even when their progress counter exceeds $L:=\ln(1/\eps)$.
At each boundary $s$, after crediting any just-completed window and
processing any rollback at that boundary, each agent records its local
state $u_s^{(i)}$ and the common scalar data $(q_s,h_s)$. Here $q_s$ is the
credit of completed windows and $h_s$ is the number of already executed
rounds of the current, unfinished attempt; $h_s=0$ if a window has just
ended or restarted. The most recent $D+1$ checkpoints are retained.
For a change at $t$, the checkpoint at $t$ is taken before any use of
$\Lap_t$.

At boundary $t+D$, every agent restores $u_t^{(i)}$, replaces its counter
by
\[
 q\leftarrow q_t+\ln\ChebT_{h_t}(z_0),
\]
and resets the attempted length to one. The physical round index is not
rolled back. If $h_t=0$, the added credit is zero; in particular, the
credit of a window completed exactly at $t$ is not counted twice.
The restored state is the output after $h_t$ iterations using the old operator
only. The new length-one attempt begins in physical round $t+D$, using
the new operator. Local notifications never cause an earlier or
asynchronous numerical restart. Apart from these common rollbacks, the
ordinary doubling schedule is used.

\paragraph{Output validation.}
All candidate handling occurs at common boundaries, after any scheduled
rollback and its counter correction. At a window-completion or rollback
boundary $b$ with $q\ge L$, if no candidate is pending, the agents save
the local output candidate $v^{(i)}:=u_b^{(i)}$ and its index $b$.
They continue numerical computation, row observation, and timestamp
forwarding during the next $D$ rounds. If a rollback subsequently occurs
for a change time $t<b$, they cancel this candidate; after correcting the
counter, a new candidate may be created at that same boundary if its
counter meets the threshold. A rollback for $t\ge b$ does not cancel the
saved candidate, because no operator from round $t$ or later contributed
to it. At boundary $b+D$, after processing any rollback due there, an
uncancelled candidate is returned by every agent. The saved candidate is
retained separately until cancellation or return. Thus neither an
unconfirmed threshold crossing nor a locally received flag can halt the
communication needed to validate or cancel an output.

\begin{proof}[Proof of Proposition~\ref{prop:flooding}]
First continue the numerical protocol hypothetically forever, ignoring
the return instruction. Delete the intervals of physical rounds
\[
 [t_i,t_i+D),\qquad i\ge1,
\]
from the operator sequence. They are disjoint because $\tau\ge2D$.
The first remaining constant segment has length at least $\tau$, and
each subsequent complete segment has length at least
$\tau-D\ge\tau/2$. At boundary $t_i+D$, the synchronized rollback restores
the state reached with the old operator at $t_i$, credits the corresponding
completed iterations, and restarts
on the new operator. Therefore the states, counters, and schedules on the
retained rounds agree exactly with the ideal-detector scheduler on the
compressed sequence. The temporary states and credits inside a deleted
interval play no role in this retained computation. The argument is
pathwise and does not require the new operators to be independent of the
past.

By Lemma~\ref{lem:dwell-shifted-start}, the ideal-detector run reaches its
stopping threshold after at most
\[
 K'\le C_{\rm call}
       \left(\sqrt\chi+\frac{2\chi}{\tau}\right)\ln\frac e\eps
\]
retained rounds. Let $B$ be the corresponding physical boundary, taking
the boundary after a rollback when a deleted interval is compressed to
the terminal boundary. If $N_B$ intervals have been deleted before $B$,
then $B=K'+DN_B$ (or the same inequality with the displayed upper bound
for $K'$). Minimum spacing gives $N_B\le B/\tau+1$, hence
\[
 B\le K'+\frac D\tau B+D\le K'+\frac B2+D,
 \qquad B\le2K'+2D.
\]

Every false candidate is cancelled: if its state or credit was affected
by an as-yet unprocessed change at $t<b$, the common rollback at $t+D$
occurs after its creation and no later than $b+D$. Conversely, a
candidate not cancelled by $b+D$ contains no such uncorrected change,
and coincides with a threshold-reaching state of the retained
ideal-detector computation. It thus satisfies the claimed consensus
bound. A change at or after $b$ cannot invalidate that saved state.
At the retained boundary $B$, all earlier deleted intervals have already
been corrected. Any candidate still pending there is valid; if none is
pending, the threshold-reaching state at $B$ creates one. Consequently
the actual protocol returns no later than $B+D$, so
\[
 K\le2K'+3D.
\]
Finally $D\le\sqrt\chi\ln n+1$ for $n\ge2$. This proves the stated
round complexity. Checkpoints use $O(Dd)$ real coordinates per agent
plus $O(D)$ scalar scheduling data, with one extra local candidate
vector. Each neighbor message carries one timestamp, requiring
$O(\log(K+1))$ bits up to termination; this is bandwidth overhead, not
an additional exchange round. The $D$ validation exchanges are included
in the bound on $K$.
\end{proof}

\section{Proofs for the optimization results}
\label{app:optimization}

For completeness, we make the logarithmic factors hidden in
Theorem~\ref{thm:opt-main} explicit. Define
\[
\Lambda_\eps
:=
1+\ln_+\frac{G_0^2}{\mu\eps},
\qquad
\Gamma_{\eps,n}
:=
\Lambda_\eps+\ln\kappa
+\ln_+\frac{\sqrt n}{\kappa}.
\]
The proof below gives
\[
N_{\rm grad}
=
O(\sqrt\kappa\,\Lambda_\eps)
\]
local-gradient evaluations per agent and replaces
$\widetilde O(\sqrt\kappa)$ in~\eqref{eq:opt-main} by
$O(\sqrt\kappa\,\Lambda_\eps\Gamma_{\eps,n})$.

\subsection{Complete schedule}
\label{app:optimization-implementation}

Algorithm~\ref{alg:wave-stm} combines restarted Similar Triangles
iterations~\cite{Nesterov2004,GorbunovDanilova2020} with \WAVE{}
communication. For $G_0>0$, set
\begin{equation}\label{eq:optimization-parameters}
\begin{gathered}
 N_0:=\lceil8\sqrt\kappa\rceil,\qquad
 S:=1+\left\lceil\max\left\{0,
       \log_2\frac{G_0}{2\sqrt{\mu\eps}}\right\}\right\rceil,\\
 \delta_\star:=\left[60000\,\kappa
 \left(\kappa+(1+\kappa)
       \max\left\{1,\frac{G_0}{\sqrt{\mu\eps}}\right\}\right)
 \right]^{-1}.
\end{gathered}
\end{equation}
Each \WAVE{} call is Algorithm~\ref{alg:wave}: the global round counter
$k$ continues across calls, while the piecewise-constant doubling schedule
starts anew at length one.

The optimization proof uses the mean-preservation and contraction contract
in~\eqref{eq:wave-call-contract}, together with the following call-cost
bounds. Let $\mathcal F_k$ denote all information available before round
$k$, including the observed change indicators and operators and the current
algorithm state. Under Assumption~\ref{ass:memoryless-changes}, future
change indicators are independent of $\mathcal F_k$.

\begin{corollary}[Certified \WAVE{} call]
\label{cor:wave-call-cost}
Under Assumption~\ref{ass:uniform-spectrum} and the corresponding
variation assumption (Assumption~\ref{ass:metric-drift},
\ref{ass:dwell-time}, or \ref{ass:memoryless-changes}), a fresh call
$(\widehat{\boldsymbol u},k^+)=\WAVE(\boldsymbol u,k;\delta)$,
$\delta\in(0,1)$, run with the corresponding scheduler satisfies the
respective bound below for absolute constants
$C_{\rm m},C_{\rm d},C_{\rm r}>0$:
\begin{align}
 k^+-k
 &\le C_{\rm m}\bigl(\sqrt\chi+\min\{\beta\chi^2,\chi\}\bigr)
       \ln\frac e\delta, &&\text{metric drift},\\
 k^+-k
 &\le C_{\rm d}\bigl(\sqrt\chi+\chi/\tau\bigr)
       \ln\frac e\delta, &&\text{piecewise-constant, minimum spacing},\\
 \mathbb E[k^+-k\mid\mathcal F_k]\label{eq:wave-call-memoryless}
 &\le C_{\rm r}\left(\sqrt\chi+\frac\chi{\bar\tau}\right)
       \ln\frac e\delta, &&\text{memoryless change times}.
\end{align}
The last line holds almost surely for every stopping-time start $k$;
both detector-assisted bounds use the ideal external change detector,
and neither $\tau$ nor $\bar\tau$ is supplied to the scheduler.
\end{corollary}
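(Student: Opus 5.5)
The plan is to reduce each of the three bounds to a result already proved for a call started at round $0$. The key observation is that every earlier argument is uniform over the starting round and over the starting state. A fresh call $\WAVE(\boldsymbol u,k;\delta)$ is an execution of Algorithm~\ref{alg:wave} on the shifted operator sequence $\Lap'_j:=\Lap_{k+j}$, with input $\boldsymbol u$ and target $\delta$. I will therefore check, for each model, that the shifted sequence still satisfies the relevant assumption, and then invoke the corresponding complexity estimate with $\eps$ replaced by $\delta$. The output contract~\eqref{eq:wave-call-contract} is supplied in every case by Lemma~\ref{lem:wave-progress-accounting}; it does not depend on where the call starts.

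\emph{Metric drift.} Assumptions~\ref{ass:uniform-spectrum} and~\ref{ass:metric-drift} are invariant under time shifts. The scheduler uses the fixed length $m_\beta$ and increments $q$ by $m_\beta^2/(5\chi)$ per window, so the call stops after $J=\lceil 5\chi\ln(1/\delta)/m_\beta^2\rceil$ windows, independently of the state. Theorem~\ref{thm:metric-window} (through its use in the proof of Theorem~\ref{thm:main-drift}) gives
\[
k^+-k\le(5\chi/m_\beta)\ln(1/\delta)+m_\beta.
\]
The case analysis on $\widetilde m_\beta\ge2$ versus $\widetilde m_\beta<2$ in that proof then bounds this by $C_{\rm m}(\sqrt\chi+\min\{\beta\chi^2,\chi\})\ln(e/\delta)$. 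The additive $m_\beta\le\sqrt\chi$ is absorbed because $\ln(e/\delta)\ge1$.

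\emph{Minimum spacing.} A shift may place the call start in the middle of a constant segment, so the first piece can be shorter than $\tau$. This is exactly the situation covered by Lemma~\ref{lem:dwell-shifted-start}: it treats a fresh call at an arbitrary global round with the attempted length reset to one, and its boundary-piece accounting absorbs the truncated first and last segments. The lemma gives the bound with $C_{\rm d}=C_{\rm call}$ directly, and the scheduler never uses $\tau$.

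\emph{Memoryless change times.} This is the only case needing care, since the bound is a conditional expectation at a possibly random start. I will take $k$ to be a stopping time with respect to $(\mathcal F_k)$. By Assumption~\ref{ass:memoryless-changes}, the future indicators $(B_{k+j})_{j\ge0}$ are i.i.d.\ Bernoulli$(1/\bar\tau)$ and independent of $\mathcal F_k$; for an almost surely finite stopping time this follows by the strong Markov argument of decomposing on $\{k=k_0\}$. The proof of Theorem~\ref{thm:dwell-rand} was written for the duration $D_\delta$ of a fresh certified call from any such start. Its three regimes $\bar\tau<2$, $2\le\bar\tau\le 4m_{\max}$, and $\bar\tau>4m_{\max}$ use only the deterministic baseline~\eqref{eq:dyadic-baseline}, the i.i.d.\ geometric segment lengths, and the i.i.d.\ clean-block indicators. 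All of these hold conditionally on $\mathcal F_k$, so the bound $\E[D_\delta\mid\mathcal F_k]\le C_{\rm r}(\sqrt\chi+\chi/\bar\tau)\ln(e/\delta)$ holds almost surely.

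The main obstacle is this last conditioning step. Near the start, the first geometric segment $T_1$ is still geometric by memorylessness, so no truncation issue arises there. The remaining point to check is that the adversarially chosen operators at change points do not break the independence used in the Wald/Tonelli steps; they do not, because those steps involve only the change indicators. Finally, $C_{\rm m},C_{\rm d},C_{\rm r}$ are taken as the maxima of the absolute constants produced by the three cases.
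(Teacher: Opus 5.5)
Your proposal is correct and follows the same route as the paper's proof. For metric drift, you use time-shift invariance together with Theorem~\ref{thm:main-drift}; for minimum spacing, you apply Lemma~\ref{lem:dwell-shifted-start} directly; and for memoryless change times, you run the certified-progress argument of Theorem~\ref{thm:dwell-rand} conditionally on $\mathcal F_k$ at an almost surely finite stopping-time start. Your explicit window count and your absorption of the additive $m_\beta\le\sqrt\chi$ into $\ln(e/\delta)$ simply spell out steps the paper leaves implicit.
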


\begin{algorithm}[ht]
\caption{\WAVESTM{}: restarted STM with \WAVE{} communication}
\label{alg:wave-stm}\label{alg:stm-stage}
\KwIn{common point $x_0$; $G_0$ satisfying
$n^{-1}\sum_i\norm{\nabla f_i(x_0)}^2\leq G_0^2$; $\alpha,\mu$;
target $\eps>0$; a fixed \WAVE{} scheduler}
$\boldsymbol v\leftarrow\ones_n\otimes x_0$; $k\leftarrow0$\;
\lIf{$G_0=0$}{\Return $(\boldsymbol v,k)$}
Set $N_0,S,\delta_\star$ by~\eqref{eq:optimization-parameters}\;
\For{$s=0,\ldots,S-1$}{
  \lIf{$s\ge1$}{$(\boldsymbol v,k)\leftarrow\WAVE(\boldsymbol v,k;1/4)$}
  $\boldsymbol x_0=\boldsymbol y_0=\boldsymbol z_0\leftarrow\boldsymbol v$;
  $A_0\leftarrow0$\;
  \For{$\ell=0,\ldots,N_0-1$}{
    $\omega_{\ell+1}\leftarrow(\ell+2)/(2\alpha)$;
    $A_{\ell+1}\leftarrow A_\ell+\omega_{\ell+1}$\;
    $\boldsymbol x_{\ell+1}\leftarrow
      (A_\ell\boldsymbol y_\ell+\omega_{\ell+1}\boldsymbol z_\ell)
      /A_{\ell+1}$\;
    $g_{\ell+1}^{(i)}\leftarrow\nabla f_i(x_{\ell+1}^{(i)})$
    for all $i$; stack into $\boldsymbol g_{\ell+1}$\;
    $(\boldsymbol z_{\ell+1},k)\leftarrow
      \WAVE(\boldsymbol z_\ell-\omega_{\ell+1}\boldsymbol g_{\ell+1},
            k;\delta_\star)$\;
    $\boldsymbol y_{\ell+1}\leftarrow
      (A_\ell\boldsymbol y_\ell+\omega_{\ell+1}\boldsymbol z_{\ell+1})
      /A_{\ell+1}$\;
  }
  $\boldsymbol v\leftarrow\boldsymbol y_{N_0}$\;
}
\lIf{$\kappa<\sqrt n$}{$(\boldsymbol v,k)\leftarrow
  \WAVE(\boldsymbol v,k;\kappa/\sqrt n)$}
$\boldsymbol y_{\rm out}\leftarrow\boldsymbol v$;
\Return $(\boldsymbol y_{\rm out},k)$\;
\end{algorithm}

\begin{corollary}[\WAVESTM{} under memoryless change times]
\label{cor:opt-memoryless}
Let Assumptions~\ref{ass:uniform-spectrum} and
\ref{ass:memoryless-changes} hold with $\chi\geq4$, and assume access to
the ideal change detector. With $\Lambda_\eps$ and $\Gamma_{\eps,n}$ defined above,
run Algorithm~\ref{alg:wave-stm} with the piecewise-constant \WAVE{}
scheduler. It returns an
$\eps$-solution at every agent almost surely, uses the same gradient
bound, and satisfies
\[
 \E K=O\!\left(\sqrt\kappa\,\Lambda_\eps\Gamma_{\eps,n}
 [\sqrt\chi+\chi/\bar\tau]\right).
\]
\end{corollary}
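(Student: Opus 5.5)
The statement to prove is Corollary~\ref{cor:opt-memoryless}. I would reuse the deterministic analysis behind Theorem~\ref{thm:opt-main} pathwise, and add one stochastic layer: bounding the expected total number of communication rounds. The argument separates cleanly into two parts.

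\emph{Correctness part.} The correctness part of the \WAVESTM{} analysis uses only two things: the contract \eqref{eq:wave-call-contract} of each \WAVE{} call (mean preservation and contraction by the requested factor $\delta$), and the fixed schedule $N_0,S,\delta_\star$. By Theorem~\ref{thm:dwell-rand}, every call under Assumption~\ref{ass:memoryless-changes} terminates almost surely. By Lemma~\ref{lem:wave-progress-accounting}, whenever a call terminates its output satisfies \eqref{eq:wave-call-contract} for every realization of the change process. The deterministic Similar Triangles argument, together with the final averaging call with factor $\kappa/\sqrt n$, then applies realization by realization. This gives the $\eps$-solution at every agent almost surely. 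The gradient count is $SN_0=O(\sqrt\kappa\Lambda_\eps)$, which is deterministic and independent of the network.

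\emph{Communication part.} Write $K=\sum_{c}(k_c^+-k_c)$ as a sum over the calls $c$ in the order they are issued. The calls are:
\begin{itemize}
\item $SN_0$ inner calls with tolerance $\delta_\star$;
\item $S-1$ restart calls with tolerance $1/4$;
\item possibly one final call with tolerance $\kappa/\sqrt n$.
\end{itemize}
The total number of calls and their tolerances are fixed in advance. Each call starts at a round $k_c$ that is a stopping time with respect to $(\mathcal F_k)$, because it is determined by the history of the previous calls; each $k_c$ is finite almost surely by induction. By \eqref{eq:wave-call-memoryless} of Corollary~\ref{cor:wave-call-cost},
\[
\E[k_c^+-k_c\mid\mathcal F_{k_c}]\le C_{\rm r}\bigl(\sqrt\chi+\chi/\bar\tau\bigr)\ln(e/\delta_c).
\]
Taking expectations and summing over the deterministic, finite list of calls (linearity of expectation, with all terms nonnegative) gives
\[
\E K\le C_{\rm r}\bigl(\sqrt\chi+\chi/\bar\tau\bigr)\Bigl[SN_0\ln\tfrac e{\delta_\star}+(S-1)\ln(4e)+\ln_+\tfrac{e\sqrt n}{\kappa}\Bigr].
\]

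\emph{Bookkeeping.} It remains to check the logarithmic factors.
\begin{itemize}
\item From the definition of $S$, we have $S=O(\Lambda_\eps)$, since $\log_2(G_0/\sqrt{\mu\eps})\lesssim 1+\ln_+(G_0^2/(\mu\eps))$.
\item From the definition of $\delta_\star$, we have $\ln(e/\delta_\star)=O(\ln\kappa+\Lambda_\eps+1)\le O(\Gamma_{\eps,n})$.
\item The final call contributes $O(1+\ln_+(\sqrt n/\kappa))\le O(\Gamma_{\eps,n})$.
\end{itemize}
Since $N_0=O(\sqrt\kappa)$, the total is $O(\sqrt\kappa\Lambda_\eps\Gamma_{\eps,n}[\sqrt\chi+\chi/\bar\tau])$, which is the claimed bound.

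\emph{Main obstacle.} The delicate point is applying the conditional bound at random starting rounds. The scheduler resets its attempted window length to one at each call, and Corollary~\ref{cor:wave-call-cost} states the memoryless bound almost surely at stopping-time starts. So the plan is to invoke it with $k=k_c$ and use the tower property, $\E[k_c^+-k_c]=\E\,\E[k_c^+-k_c\mid\mathcal F_{k_c}]$. This requires the number of calls to be deterministic, which it is here, so no Wald-type argument is needed. If the corollary's stopping-time clause needs justification, I would first rederive it from the proof of Theorem~\ref{thm:dwell-rand}, which conditions on an almost surely finite stopping-time start and uses the fact that future Bernoulli change indicators remain i.i.d.\ after that start (the strong Markov property of the indicator sequence).
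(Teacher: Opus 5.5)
Your proposal is correct and follows essentially the same route as the paper: you reuse the deterministic \WAVESTM{} argument pathwise on realizations where every call terminates and satisfies its contract, then apply the tower property to the deterministic, finite list of calls starting at stopping times via \eqref{eq:wave-call-memoryless}, with the same bookkeeping for $S$, $\ln(e/\delta_\star)$, and the final call. The only difference is cosmetic: you cite Theorem~\ref{thm:dwell-rand} for almost-sure termination, which is stated only for a start at round $0$, whereas the paper obtains termination of every call inductively from the conditional finite-expectation bound of Corollary~\ref{cor:wave-call-cost} at stopping-time starts, a bound you also invoke in your final paragraph.
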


\subsection{Proof scales and the \WAVE{} interface}
\label{app:optimization-proof-scales}

A stage means one execution of the inner STM loop in
Algorithm~\ref{alg:wave-stm}. Its parameters $N_0,S,\delta_\star$ are
specified in~\eqref{eq:optimization-parameters}. Throughout the proof,
$\kappa=\alpha/\mu\ge1$.
For $G_0>0$, define
\begin{equation}\label{eq:optimization-derived-scales}
 R:=\frac{G_0}{\mu},\qquad H:=(1+\kappa)G_0,\qquad
 r_\eps:=\min\left\{R,\sqrt{\frac{\eps}{\mu}}\right\}.
\end{equation}
The initialization argument below proves
$\norm{x_0-x^\star}\le R$ and
$(n^{-1}\sum_i\norm{\nabla f_i(x^\star)}^2)^{1/2}\le H$.
Set $r_s:=2^{-s}R$ for $s=0,1,\ldots$.
The main-text parameter formulas give the exact identities
\begin{equation}\label{eq:delta-star}
\begin{aligned}
 S&=1+\left\lceil\max\left\{0,
       \log_2\left(R\sqrt{\frac{\mu}{4\eps}}\right)\right\}\right\rceil,\\
 \delta_\star&=\left[60000\,\kappa
       \left(\kappa+\frac{H}{\mu r_\eps}\right)\right]^{-1},
\end{aligned}
\end{equation}
because
$H/(\mu r_\eps)=(1+\kappa)\max\{1,G_0/\sqrt{\mu\eps}\}$.
The quantities $R,H,r_\eps,r_s$ are used only in the analysis.

\paragraph{Communication interface.}
The optimization proof uses exact mean preservation and the contraction
in~\eqref{eq:wave-call-contract}, followed by the call-cost bounds of
Corollary~\ref{cor:wave-call-cost}. Each call is
Algorithm~\ref{alg:wave}: the global communication counter continues
across calls, and each piecewise-constant call starts a fresh doubling
schedule at length one.
Use a common spectral envelope $\chi\ge4$ for the scheduler.
A smaller valid envelope may be enlarged to $4$, changing the stated
complexity bounds only by absolute factors.

\begin{proof}[Proof of Corollary~\ref{cor:wave-call-cost}]
The metric estimate follows from Theorem~\ref{thm:main-drift} after
shifting the starting round; the spectral and drift assumptions are
preserved by this shift. The minimum-spacing estimate for a
call starting anywhere within an interval between changes is
Lemma~\ref{lem:dwell-shifted-start}.

For memoryless change times, use the filtration $\mathcal F_t$ defined
above. For a fixed $\delta\in(0,1)$, start a call at an almost
surely finite stopping time $K$, with an $\mathcal F_K$-measurable input.
Conditional on $\mathcal F_K$, future boundary indicators retain their
original i.i.d.\ law. The certified-progress argument in the proof of
Theorem~\ref{thm:dwell-rand} bounds the duration until the accumulated
certificate reaches $\ln(1/\delta)$, uniformly over the initial
operator and admissible history-dependent choices of subsequent
operators. Applied conditionally, it gives
\[
 \mathbb E[K^+-K\mid\mathcal F_K]
 \le C_{\rm r}\left(\sqrt\chi+\frac{\chi}{\bar\tau}\right)
          \ln\frac e\delta
 \quad\text{almost surely}.
\]
This is a bound on the actual duration of Algorithm~\ref{alg:wave},
and proves~\eqref{eq:wave-call-memoryless}.
Both detector-assisted estimates use the ideal external change detector.
\end{proof}

\subsection{STM with additive gradient errors}

We use the similar-triangles form of the fast gradient method (STM;
see \cite{Nesterov2004,GorbunovDanilova2020} and, for the
inexact-oracle context, \cite{DGN2014,Devolder2013,KornilovEtAl2025JOTA}):
given a convex $\alpha$-smooth $f$ on $\R^{d}$, use estimates
$\tilde g_{\ell+1}=\nabla f(x_{\ell+1})+\theta_{\ell+1}$ with
$\norm{\theta_{\ell+1}}\le\eta$.
The errors may depend on the entire history; no unbiasedness or
independence is assumed. The recursion is
given below; within this subsection, $(z_\ell)$ denotes the STM iterate
sequence and is unrelated to the Chebyshev scalar $z_0=z_\chi(0)$.
\begin{equation}\label{eq:stm}
\begin{aligned}
&\omega_{\ell+1}=\frac{\ell+2}{2\alpha},\qquad
A_{\ell+1}=A_{\ell}+\omega_{\ell+1},\qquad A_0=0,\qquad
x_0=y_0=z_0,\\
&x_{\ell+1}=\frac{A_{\ell}y_{\ell}+\omega_{\ell+1}z_{\ell}}{A_{\ell+1}},\qquad
z_{\ell+1}=z_{\ell}-\omega_{\ell+1}\tilde g_{\ell+1},\qquad
y_{\ell+1}=\frac{A_{\ell}y_{\ell}+\omega_{\ell+1}z_{\ell+1}}{A_{\ell+1}}.
\end{aligned}
\end{equation}
Note $A_N=\sum_{\ell=0}^{N-1}\frac{\ell+2}{2\alpha}=\frac{N(N+3)}{4\alpha}
\ge\frac{N^{2}}{4\alpha}$, $\;\alpha\omega_{\ell+1}^{2}\le A_{\ell+1}$ (equivalent to
$(\ell+2)^{2}\le(\ell+1)(\ell+4)$), and
$W_N:=\sum_{\ell=0}^{N-1}\omega_{\ell+1}^{2}\le\frac{(N+2)^{3}}{12\alpha^{2}}$.

\begin{lemma}[STM with additive noise]\label{lem:stm-noise}
Let $f$ be convex and $\alpha$-smooth with a minimizer $x^\star$,
write $f^\star:=f(x^\star)$ and $R_0:=\norm{x_0-x^\star}$, and let
\eqref{eq:stm} be run with
$\norm{\theta_{\ell+1}}\le\eta$ for all $\ell<N$, $N\ge4$. Then, with
$C_N:=A_N+\alpha W_N\le\frac{(N+2)^{3}}{6\alpha}$,
\begin{itemize}
\item[\textup{(i)}] \textup(trajectory\textup)\ \
$\displaystyle\max_{\ell\le N}\ \norm{z_{\ell}-x^\star}\ \le\
\bar\rho:=R_0+2\eta C_N+\eta\sqrt{2W_N}$, and all points
$x_{\ell},y_{\ell}$ lie in the ball of radius $\bar\rho$ around $x^\star$.
Moreover, this estimate is causal: for every $T\le N$, if the error
bounds are known only for $\ell=0,\ldots,T-1$, then
\begin{equation}\label{eq:stm-causal-trajectory}
 \max_{\ell\le T}\norm{z_\ell-x^\star}
 \le R_0+2\eta C_T+\eta\sqrt{2W_T}
 \le R_0+2\eta C_N+\eta\sqrt{2W_N},
\end{equation}
where $C_T:=A_T+\alpha W_T$.
\item[\textup{(ii)}] \textup(accuracy\textup)\ \
$\displaystyle f(y_N)-f^\star\ \le\
\frac{2\alpha R_0^{2}}{N^{2}}+3\,\eta R_0 N+\frac{6\,\eta^{2}N^{4}}{\alpha}$.
\end{itemize}
\end{lemma}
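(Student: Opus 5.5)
We follow the standard estimate-sequence (potential) argument for the Similar Triangles method, carrying the gradient error $\theta_{\ell+1}$ as an additive perturbation. The central quantity is
\[
 \Phi_\ell:=A_\ell\bigl(f(y_\ell)-f^\star\bigr)+\tfrac12\norm{z_\ell-x^\star}^2 .
\]
We first prove a one-step inequality of the form
\[
 \Phi_{\ell+1}\le\Phi_\ell-\omega_{\ell+1}\inner{\theta_{\ell+1}}{z_\ell-x^\star}+\tfrac12\omega_{\ell+1}^2\norm{\theta_{\ell+1}}^2\cdot c ,
\]
for an absolute constant $c$.

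To obtain it, expand $\tfrac12\norm{z_{\ell+1}-x^\star}^2$ using the update $z_{\ell+1}=z_\ell-\omega_{\ell+1}\tilde g_{\ell+1}$. Then apply $\alpha$-smoothness at $x_{\ell+1}$, using $y_{\ell+1}-x_{\ell+1}=(\omega_{\ell+1}/A_{\ell+1})(z_{\ell+1}-z_\ell)$. Finally use convexity: $f(x_{\ell+1})+\inner{\nabla f(x_{\ell+1})}{u-x_{\ell+1}}\le f(u)$ with $u=y_\ell$ and $u=x^\star$. The condition $\alpha\omega_{\ell+1}^2\le A_{\ell+1}$ absorbs the quadratic term $\tfrac{\alpha}{2}\omega_{\ell+1}^2\norm{\tilde g_{\ell+1}}^2$ against $\tfrac12\norm{z_{\ell+1}-z_\ell}^2$. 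Comparing with the exact-gradient case, the noise appears in two places. The first is a cross term $\omega_{\ell+1}\inner{\theta_{\ell+1}}{x^\star-z_{\ell+1}}$. The second is a term from replacing $\nabla f$ by $\tilde g$ inside the smoothness bound, which Young's inequality controls by $\omega_{\ell+1}^2\eta^2$ plus a piece absorbed using $A_{\ell+1}$. The cross term at $z_{\ell+1}$ is rewritten via $z_\ell$ plus $\omega_{\ell+1}\norm{\theta}\norm{\tilde g}$-type terms; keeping track of these is where $C_N=A_N+\alpha W_N$ should emerge.

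Summing from $0$ to $T-1$ with $\Phi_0=\tfrac12R_0^2$ gives
\[
 \tfrac12\norm{z_T-x^\star}^2+A_T\bigl(f(y_T)-f^\star\bigr)\le\tfrac12R_0^2+\eta\sum_{\ell<T}\omega_{\ell+1}\norm{z_\ell-x^\star}+\eta^2(\cdots)W_T .
\]
For part (i), set $\rho_T:=\max_{\ell\le T}\norm{z_\ell-x^\star}$ and bound the sum by $\eta A_T\rho_T$ (possibly with an extra $\alpha W_T$ contribution from the Young step). This yields a quadratic inequality $\rho_T^2\le R_0^2+2\eta C_T\rho_T+2\eta^2W_T$. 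Solving it gives $\rho_T\le \eta C_T+\sqrt{\eta^2C_T^2+R_0^2+2\eta^2W_T}\le R_0+2\eta C_T+\eta\sqrt{2W_T}$. The estimate is causal because only errors with index $<T$ enter. Monotonicity of $C_T$ and $W_T$ in $T$ gives the second inequality. Since $x_\ell,y_\ell$ are convex combinations of $z_0,\dots,z_\ell$ (by induction from the STM recursion), they lie in the same ball.

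For part (ii), divide the summed inequality by $A_N\ge N^2/(4\alpha)$ and insert $\bar\rho$ in the linear noise term:
\[
 f(y_N)-f^\star\le\frac{R_0^2}{2A_N}+\frac{\eta C_N\bar\rho+\eta^2W_N}{A_N}.
\]
Using $C_N\le (N+2)^3/(6\alpha)$, $N\ge4$ (so $(N+2)^3\le cN^3$), and $\bar\rho\le R_0+O(\eta N^3/\alpha)$ gives terms of order $\alpha R_0^2/N^2$, $\eta R_0N$, and $\eta^2N^4/\alpha$. The main obstacle is matching the stated explicit constants $2,3,6$. The one-step inequality must be arranged carefully, in particular how the $\theta$–$z_{\ell+1}$ cross term is split, so that the accumulated noise coefficient is exactly $C_N$ rather than a larger multiple. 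We would first try to bound the cross term directly at $z_{\ell+1}$, so that no $\norm{\tilde g}$ factor appears. With $N\ge4$, $(N+2)^3/(6N^2)\le$ about $0.75N$, which makes the constant $3$ in $3\eta R_0N$ comfortable.
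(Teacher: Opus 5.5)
Your outline follows the paper's argument. It derives the same master inequality (smoothness along $y_{\ell+1}-x_{\ell+1}=\frac{\omega_{\ell+1}}{A_{\ell+1}}(z_{\ell+1}-z_\ell)$, convexity at $y_\ell$ and at $u=x^\star$, the three-point identity for $z$, telescoping). It then uses the same quadratic inequality in $\rho_T$, the convex-hull containment of $x_\ell,y_\ell$, and division by $A_N$.

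The step that fails as you describe it is the one-step inequality with the cross term at $z_\ell$ and an absolute-constant $\norm{\theta_{\ell+1}}^2$ remainder, obtained by Young's inequality with a piece ``absorbed using $A_{\ell+1}$''. Once $\alpha\omega_{\ell+1}^2\le A_{\ell+1}$ has been used, the $-\frac12\norm{z_{\ell+1}-z_\ell}^2$ from the three-point identity is essentially consumed. The leftover is only $\frac{\ell}{2(\ell+1)(\ell+4)}\norm{z_{\ell+1}-z_\ell}^2$, which vanishes at $\ell=0$ because $\alpha\omega_1^2=A_1$. So nothing remains to absorb $\omega_{\ell+1}^2\norm{\tilde g_{\ell+1}}^2$. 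The paper uses no Young step. It keeps $\omega_{\ell+1}^2\inner{\theta_{\ell+1}}{\tilde g_{\ell+1}}$ and bounds $\norm{\tilde g_{\ell+1}}\le\alpha\rho_\ell+\eta$, using $\nabla f(x^\star)=0$ and $x_{\ell+1}\in\operatorname{conv}\{z_0,\dots,z_\ell\}$. That bound is exactly where $\alpha W_T$ in $C_T$ and the $\eta^2W_T$ term come from, which your sketch leaves as ``should emerge''.

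Your closing suggestion is the cleanest repair, and it works. The two noise contributions combine exactly: $-\omega_{\ell+1}\inner{\theta_{\ell+1}}{z_\ell-u}+\omega_{\ell+1}^2\inner{\theta_{\ell+1}}{\tilde g_{\ell+1}}=-\omega_{\ell+1}\inner{\theta_{\ell+1}}{z_{\ell+1}-u}$. This gives $\frac12\norm{z_T-x^\star}^2+A_T(f(y_T)-f^\star)\le\frac12R_0^2+\eta A_T\rho_T$, using only $\theta_1,\dots,\theta_T$. Hence $\rho_T\le R_0+2\eta A_T$ causally, and $f(y_N)-f^\star\le\frac{R_0^2}{2A_N}+\eta R_0+2\eta^2A_N$. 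Both (i) and (ii) follow with room to spare, since $A_T\le C_T$ and $A_N=\frac{N(N+3)}{4\alpha}$, so your ``main obstacle'' of matching the constants $2,3,6$ disappears. If you follow the paper's bookkeeping instead, note that $C_N/A_N\le\frac{2(N+2)^3}{3N^2}\le\frac94N$ for $N\ge4$, not about $0.75N$. You dropped the factor $4$ from $A_N\ge N^2/(4\alpha)$, though $\frac94\le3$ still suffices.
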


\begin{proof}
\emph{Master inequality.} Write
$\omega=\omega_{\ell+1}$, $A=A_{\ell+1}$. By $\alpha$-smoothness and
$y_{\ell+1}-x_{\ell+1}=-\frac{\omega^{2}}{A}\tilde g_{\ell+1}$,
\begin{equation*}
\begin{aligned}
f(y_{\ell+1})
&\le f(x_{\ell+1})-\frac{\omega^{2}}{A}
  \inner{\nabla f(x_{\ell+1})}{\tilde g_{\ell+1}}
  +\frac{\alpha\omega^{4}}{2A^{2}}\norm{\tilde g_{\ell+1}}^{2}\\
&\le f(x_{\ell+1})-\frac{\omega^{2}}{2A}
  \norm{\tilde g_{\ell+1}}^{2}
  +\frac{\omega^{2}}{A}
  \inner{\theta_{\ell+1}}{\tilde g_{\ell+1}}.
\end{aligned}
\end{equation*}
using $\inner{\nabla f}{\tilde g}=\norm{\tilde g}^{2}
-\inner{\theta}{\tilde g}$ and $\alpha\omega^{2}\le A$. Multiplying by
$A=A_{\ell}+\omega$ and applying convexity twice,
$A_{\ell}f(x_{\ell+1})\le A_{\ell}f(y_{\ell})+A_{\ell}\inner{\nabla f(x_{\ell+1})}
{x_{\ell+1}-y_{\ell}}$ and $\omega f(x_{\ell+1})\le\omega f(u)
+\omega\inner{\nabla f(x_{\ell+1})}{x_{\ell+1}-u}$ for any $u$, and noting
$A_{\ell}(x_{\ell+1}-y_{\ell})+\omega(x_{\ell+1}-u)
=\omega(z_{\ell}-u)$ by the definition of $x_{\ell+1}$, we get
\begin{equation*}
\begin{aligned}
A_{\ell+1}f(y_{\ell+1})
&\le A_{\ell}f(y_{\ell})+\omega f(u)
  +\omega\inner{\tilde g_{\ell+1}}{z_{\ell}-u}\\
&\quad-\omega\inner{\theta_{\ell+1}}{z_{\ell}-u}
  -\frac{\omega^{2}}{2}\norm{\tilde g_{\ell+1}}^{2}
  +\omega^{2}\inner{\theta_{\ell+1}}{\tilde g_{\ell+1}}.
\end{aligned}
\end{equation*}
Finally, from $z_{\ell+1}=z_{\ell}-\omega\tilde g_{\ell+1}$,
$\omega\inner{\tilde g_{\ell+1}}{z_{\ell}-u}
=\tfrac12\norm{z_{\ell}-u}^{2}-\tfrac12\norm{z_{\ell+1}-u}^{2}
+\tfrac{\omega^{2}}{2}\norm{\tilde g_{\ell+1}}^{2}$; the quadratic terms
cancel, and telescoping over $\ell=0,\dots,N-1$ (with $A_0=0$) gives, for
every $u$,
\begin{equation}\label{eq:master}
\begin{aligned}
A_N\bigl(f(y_N)-f(u)\bigr)
&\le \tfrac12\norm{z_0-u}^{2}
  -\tfrac12\norm{z_N-u}^{2}\\
&\quad-\sum_{\ell=0}^{N-1}\omega_{\ell+1}
  \inner{\theta_{\ell+1}}{z_{\ell}-u}\\
&\quad+\sum_{\ell=0}^{N-1}\omega_{\ell+1}^{2}
  \inner{\theta_{\ell+1}}{\tilde g_{\ell+1}}.
\end{aligned}
\end{equation}

\emph{Trajectory bound.} An easy induction shows
\[
y_{\ell}\in\operatorname{conv}\{z_0,\dots,z_{\ell}\},
\qquad
x_{\ell+1}\in\operatorname{conv}\{z_0,\dots,z_{\ell}\}.
\]
Hence, with $\rho_T:=\max_{\ell\le T}\norm{z_{\ell}-x^\star}$, all queried
points lie within $\rho_{T}$ of $x^\star$. Moreover,
\[
\norm{\nabla f(x_{\ell+1})}
=\norm{\nabla f(x_{\ell+1})-\nabla f(x^\star)}
\le \alpha\rho_{\ell}.
\]
Take $u=x^\star$ in \eqref{eq:master}, drop
$f(y_N)-f^\star\ge0$, and bound
$\inner{\theta}{\tilde g}\le\eta(\alpha\rho+\eta)$ and
$|\inner{\theta}{z_{\ell}-x^\star}|\le\eta\rho$: for every $T\le N$,
\begin{equation*}
\tfrac12\norm{z_T-x^\star}^{2}\ \le\ \tfrac12R_0^{2}
+\eta\,(A_T+\alpha W_T)\,\rho_{T}+\eta^{2}W_T
\ \le\ \tfrac12R_0^{2}+\eta\,C_N\,\rho_{N}+\eta^{2}W_N.
\end{equation*}
Taking $T$ to realize $\rho_N$ (note $\rho_0=R_0$ is included), we get
$\rho_N^{2}\le R_0^{2}+2\eta C_N\rho_N+2\eta^{2}W_N$, whence
$\rho_N\le\eta C_N+\sqrt{\eta^{2}C_N^{2}+R_0^{2}+2\eta^{2}W_N}
\le R_0+2\eta C_N+\eta\sqrt{2W_N}=\bar\rho$, which is (i).
The same argument stopped at any $T\le N$ uses only the errors
$\theta_1,\ldots,\theta_T$ and gives
\eqref{eq:stm-causal-trajectory}; no bound on future oracle errors is
needed.

\emph{Accuracy.} Using \eqref{eq:master} with $u=x^\star$ once more,
now keeping the left side and inserting $\rho_N\le\bar\rho$,
\begin{equation*}
f(y_N)-f^\star\ \le\ \frac{\tfrac12R_0^{2}
+\eta C_N\bar\rho+\eta^{2}W_N}{A_N}
\ \le\ \frac{\tfrac12R_0^{2}+\eta C_NR_0+2\eta^{2}C_N^{2}
+\eta^{2}C_N\sqrt{2W_N}+\eta^{2}W_N}{A_N}.
\end{equation*}
With $A_N\ge N^{2}/(4\alpha)$, $C_N\le(N+2)^{3}/(6\alpha)$,
$W_N\le(N+2)^{3}/(12\alpha^{2})$ and $N\ge4$ (so $N+2\le\tfrac32N$):
$\tfrac12R_0^{2}/A_N\le2\alpha R_0^{2}/N^{2}$;
$\eta C_NR_0/A_N\le\tfrac23\eta R_0(N+2)^{3}/N^{2}
\le\tfrac23(\tfrac32)^{3}\eta R_0N\le3\eta R_0N$ (indeed
$2.25\le3$); and the three $\eta^{2}$ terms are bounded by
$\bigl(\tfrac29(\tfrac32)^{6}
+\tfrac{4}{6\sqrt6}(\tfrac32)^{9/2}
+\tfrac13(\tfrac32)^{3}\bigr)\eta^{2}N^{4}/\alpha
\le(2.6+1.7+1.2)\,\eta^{2}N^{4}/\alpha\le6\eta^{2}N^{4}/\alpha$,
using $N^{2.5},N\le N^{4}$.

\end{proof}

\subsection{Initialization from a single gradient bound}

Because \(f\) is \(\mu\)-strongly convex and differentiable,
\[
 \mu\norm{x_0-x^\star}
 \le\norm{\nabla f(x_0)}
 =\left\|\frac1n\sum_i\nabla f_i(x_0)\right\|
 \le
 \left(\frac1n\sum_i\norm{\nabla f_i(x_0)}^2\right)^{1/2}
 \le G_0.
\]
Hence \(\norm{x_0-x^\star}\le R=G_0/\mu\).  Smoothness of the
individual objectives further gives
\begin{align*}
 \left(\frac1n\sum_i\norm{\nabla f_i(x^\star)}^2\right)^{1/2}
 &\le
 \left(\frac1n\sum_i\norm{\nabla f_i(x_0)}^2\right)^{1/2}
 +\left(\frac1n\sum_i
 \norm{\nabla f_i(x^\star)-\nabla f_i(x_0)}^2\right)^{1/2}\\
 &\le G_0+\alpha\norm{x^\star-x_0}
 \le(1+\kappa)G_0=H.
\end{align*}
This proves the two derived bounds asserted after
\eqref{eq:optimization-derived-scales}.  If \(G_0=0\), then
\(\nabla f(x_0)=0\), and strong convexity implies \(x_0=x^\star\).

\subsection{Mean and disagreement decomposition}

For this proof only, define
\[
 F(\boldsymbol x):=\sum_{i=1}^nf_i(x^{(i)}),
 \qquad
 \boldsymbol x^\star:=\ones_n\otimes x^\star.
\]
For a stacked vector \(\boldsymbol v\), write
\[
 \bar v:=\frac1n(\ones_n^\top\otimes I_d)\boldsymbol v.
\]
For the inner loop of Algorithm~\ref{alg:wave-stm}, exact mean
preservation in~\eqref{eq:wave-call-contract} gives
\[
 \bar z_{\ell+1}=\bar z_\ell-\omega_{\ell+1}
                    \frac1n\sum_i\nabla f_i(x_{\ell+1}^{(i)}).
\]
The averaged $\boldsymbol x$- and $\boldsymbol y$-updates are the same
convex combinations as in~\eqref{eq:stm}. Thus the averaged iterates
follow STM for \(f\), with gradient estimate
\begin{equation}\label{eq:mean-noise}
\begin{aligned}
 \widetilde g_{\ell+1}
 &:=\frac1n\sum_i\nabla f_i(x_{\ell+1}^{(i)}),\\
 \norm{\widetilde g_{\ell+1}-\nabla f(\bar x_{\ell+1})}
 &\le
 \frac{\alpha}{\sqrt n}
 \norm{\bprojperp\boldsymbol x_{\ell+1}}.
\end{aligned}
\end{equation}

The inequality follows by local smoothness, the triangle inequality,
and Cauchy--Schwarz applied to
$n^{-1}\sum_i\alpha\norm{x_{\ell+1}^{(i)}-\bar x_{\ell+1}}$.

\begin{lemma}[Disagreement within one stage]\label{lem:disagreement}
Fix a stage radius \(r>0\) and an error level $\eta\geq0$, let
\(\hat\rho:=r+2\eta C_{N_0}+\eta\sqrt{2W_{N_0}}\), and set
\[
 M_\ell:=
 \max\left\{
 \norm{\bprojperp\boldsymbol x_\ell},
 \norm{\bprojperp\boldsymbol y_\ell},
 \norm{\bprojperp\boldsymbol z_\ell}
 \right\},
 \qquad
 \Delta:=
 \delta_\star\omega_{\max}\sqrt n(\alpha\hat\rho+H),
\]
where
\(\omega_{\max}:=\omega_{N_0}\le5\sqrt\kappa/\alpha\).
Fix $T\le N_0$. Suppose
$\norm{\bar x_{\ell+1}-x^\star}\le\hat\rho$ for $0\le\ell<T$,
$M_0\le\Delta$, and $\delta_\star\le1/(12\sqrt\kappa)$.
Then $M_\ell\le2\Delta$ for $0\le\ell\le T$. In particular, the
one-step bound uses only the current averaged query point.
\end{lemma}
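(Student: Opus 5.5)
The plan is induction on $\ell$ with the hypothesis $M_\ell\le2\Delta$. The base case $\ell=0$ is the assumption $M_0\le\Delta\le2\Delta$. The only ingredients are the contraction contract~\eqref{eq:wave-call-contract} of each inner \WAVE{} call, local $\alpha$-smoothness, and the fact that the $\boldsymbol x$- and $\boldsymbol y$-updates are convex combinations. Since $\bprojperp$ is linear and the weights $A_\ell/A_{\ell+1}$ and $\omega_{\ell+1}/A_{\ell+1}$ are nonnegative and sum to one, we have
\[
\norm{\bprojperp\boldsymbol x_{\ell+1}}\le\max\{\norm{\bprojperp\boldsymbol y_\ell},\norm{\bprojperp\boldsymbol z_\ell}\}\le M_\ell,
\qquad
\norm{\bprojperp\boldsymbol y_{\ell+1}}\le\max\{\norm{\bprojperp\boldsymbol y_\ell},\norm{\bprojperp\boldsymbol z_{\ell+1}}\}.
\]
Hence it suffices to show that $\norm{\bprojperp\boldsymbol z_{\ell+1}}\le2\Delta$ whenever $M_\ell\le2\Delta$ and $\ell<T$.

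For the $\boldsymbol z$-update, \eqref{eq:wave-call-contract} with tolerance $\delta_\star$ gives
\[
\norm{\bprojperp\boldsymbol z_{\ell+1}}\le\delta_\star\bigl(\norm{\bprojperp\boldsymbol z_\ell}+\omega_{\ell+1}\norm{\boldsymbol g_{\ell+1}}\bigr),
\]
where we use $\norm{\bprojperp\boldsymbol g}\le\norm{\boldsymbol g}$. To bound the stacked gradient, compare it with $\nabla F(\boldsymbol x^\star)=(\nabla f_i(x^\star))_i$. This reference satisfies $\norm{\nabla F(\boldsymbol x^\star)}\le\sqrt n\,H$ by the initialization bound on $H$. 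Local $\alpha$-smoothness then gives
\[
\norm{\boldsymbol g_{\ell+1}-\nabla F(\boldsymbol x^\star)}\le\alpha\norm{\boldsymbol x_{\ell+1}-\ones_n\otimes x^\star}\le\alpha\bigl(\sqrt n\,\norm{\bar x_{\ell+1}-x^\star}+\norm{\bprojperp\boldsymbol x_{\ell+1}}\bigr).
\]
Here we split $\boldsymbol x_{\ell+1}-\ones_n\otimes x^\star$ into its consensus and disagreement parts. The consensus part has norm $\sqrt n\,\norm{\bar x_{\ell+1}-x^\star}\le\sqrt n\,\hat\rho$ by the hypothesis for $\ell<T$. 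This is the only place the averaged query point enters, as the statement emphasizes.

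Combining these estimates and using $\omega_{\ell+1}\le\omega_{\max}$ and $\alpha\omega_{\max}\le5\sqrt\kappa$, we obtain
\[
\norm{\bprojperp\boldsymbol z_{\ell+1}}\le\delta_\star(1+5\sqrt\kappa)M_\ell+\delta_\star\omega_{\max}\sqrt n(\alpha\hat\rho+H)=\delta_\star(1+5\sqrt\kappa)M_\ell+\Delta.
\]
The condition $\delta_\star\le1/(12\sqrt\kappa)$ together with $\kappa\ge1$ gives $\delta_\star(1+5\sqrt\kappa)\le6\sqrt\kappa/(12\sqrt\kappa)=1/2$. Therefore $\norm{\bprojperp\boldsymbol z_{\ell+1}}\le\tfrac12\cdot2\Delta+\Delta=2\Delta$. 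Feeding this into the convex-combination bounds yields $M_{\ell+1}\le2\Delta$, which closes the induction up to $\ell=T$.

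The only delicate step is the gradient bound. We must use the consensus-part hypothesis on $\bar x_{\ell+1}$, which is a causal assumption for $\ell<T$, rather than any bound on future iterates. We must also keep the self-referential term $\alpha\omega\norm{\bprojperp\boldsymbol x_{\ell+1}}\le5\sqrt\kappa\,M_\ell$ small enough to be absorbed by the factor $1/2$. Both points are handled by the constants above.
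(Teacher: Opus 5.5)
Your proof is correct and takes the same route as the paper. You use convex-combination bounds for the $\boldsymbol x$- and $\boldsymbol y$-updates and the contraction contract for the $\boldsymbol z$-update. You bound the gradient against the reference $\nabla F(\boldsymbol x^\star)$, using only the current averaged query point. Finally, you absorb the self-referential term via $\delta_\star(1+\alpha\omega_{\max})\le\frac12$, which gives $M_{\ell+1}\le\max\{M_\ell,\frac12M_\ell+\Delta\}\le2\Delta$. The one cosmetic difference is that you pass through $\norm{\bprojperp\boldsymbol g_{\ell+1}}\le\norm{\boldsymbol g_{\ell+1}}$, whereas the paper keeps the projection on the reference term; both give the same bound, since $\nabla F(\boldsymbol x^\star)$ already lies in the disagreement subspace.
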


\begin{proof}
The \(\boldsymbol x\)- and \(\boldsymbol y\)-updates are convex
combinations, while \eqref{eq:wave-call-contract} gives
\[
 \norm{\bprojperp\boldsymbol z_{\ell+1}}
 \le\delta_\star\left(
 M_\ell+\omega_{\ell+1}
 \norm{\bprojperp\boldsymbol g_{\ell+1}}\right).
\]
The bound derived above at \(x^\star\) and \(\alpha\)-smoothness imply
\begin{align*}
 \norm{\bprojperp\boldsymbol g_{\ell+1}}
 &\le
 \norm{\nabla F(\boldsymbol x_{\ell+1})
       -\nabla F(\boldsymbol x^\star)}
 +\norm{\bprojperp\nabla F(\boldsymbol x^\star)}\\
 &\le
 \alpha\left(
 M_\ell+\sqrt n\,\norm{\bar x_{\ell+1}-x^\star}\right)
 +\sqrt n H.
\end{align*}
Consequently,
\[
 \norm{\bprojperp\boldsymbol z_{\ell+1}}
 \le
 \delta_\star(1+\omega_{\max}\alpha)M_\ell+\Delta
 \le\frac12M_\ell+\Delta.
\]
Together with the convex-combination updates this yields
\(M_{\ell+1}\le\max\{M_\ell,\frac12M_\ell+\Delta\}\).
Induction from \(M_0\le\Delta\) proves \(M_\ell\le2\Delta\).
\end{proof}

\subsection{Stage induction with one inner accuracy}

\begin{proof}[Proof of Theorem~\ref{thm:opt-main} and
Corollary~\ref{cor:opt-memoryless}]
Assume \(G_0>0\); the zero case was handled above.
First consider any realization on which all calls terminate and satisfy
\eqref{eq:wave-call-contract}. The argument through the local-output
step is deterministic; almost-sure termination in the memoryless model
is established in the cost analysis below.

At the start of stage $s$, we inductively require that the mean of its
input lie within $r_s$ of $x^\star$. This holds at $s=0$ by the
initialization bound; the stage reset preserves the mean.
For stage \(s\), put
\(r=r_s\) and
\[
 \eta_s:=\frac{\mu r}{1200\sqrt\kappa},
 \qquad
 N_0=\lceil8\sqrt\kappa\rceil\le9\sqrt\kappa.
\]
The trajectory radius in Lemma~\ref{lem:stm-noise} satisfies
\begin{equation}\label{eq:rho-stage}
 \hat\rho_s:=
 r+2\eta_sC_{N_0}+\eta_s\sqrt{2W_{N_0}}
 \le1.4r.
\end{equation}
Indeed, using \(N_0+2\le11\sqrt\kappa\) and
\(\alpha=\mu\kappa\), the two additional terms are bounded by
\[
 2\eta_s C_{N_0}\le\frac{1331}{3600}r<0.37r,\qquad
 \eta_s\sqrt{2W_{N_0}}
 \le\frac{11^{3/2}}{1200\sqrt6}\kappa^{-3/4}r<0.02r.
\]

Define
\[
 \omega_{\max}:=\omega_{N_0}\le\frac{5\sqrt\kappa}{\alpha},
 \qquad
 \Delta_s:=
 \delta_\star\omega_{\max}\sqrt n(\alpha\hat\rho_s+H).
\]
At stage zero the input is consensual, so \(M_0=0\).  If \(s\ge1\),
let $M_{\rm in}:=\norm{\bprojperp\boldsymbol v}$ before the
reinitialization call. The preceding stage gives
\(M_{\rm in}\le2\Delta_{s-1}\).  Here
\(r_{s-1}=2r_s\), \(\eta_{s-1}=2\eta_s\), and \(N_0\) is fixed;
hence the definition in \eqref{eq:rho-stage} gives the exact identity
\(\hat\rho_{s-1}=2\hat\rho_s\).  Consequently,
\(\alpha\hat\rho_{s-1}+H
\le2(\alpha\hat\rho_s+H)\), and therefore
\[
 \Delta_{s-1}\le2\Delta_s.
\]
The fixed reinitialization call therefore gives
\(M_0\le\frac14M_{\rm in}\le\Delta_s\).  No stage-zero reset is needed.

We next close the mean and disagreement estimates simultaneously.  The
definition \eqref{eq:delta-star} gives
\[
 \delta_\star\le\frac1{60000\kappa^2}
 \le\frac1{12\sqrt\kappa}.
\]
Suppose causally that the mean iterates through inner iteration \(t\)
lie within \(\hat\rho_s\) and that \(M_j\le2\Delta_s\) for \(j\le t\).
The formula for \(\boldsymbol x_{t+1}\) is a convex combination of
\(\boldsymbol y_t\) and \(\boldsymbol z_t\).  It therefore first gives
\(\norm{\bar x_{t+1}-x^\star}\le\hat\rho_s\) and
\(\norm{\bprojperp\boldsymbol x_{t+1}}\le M_t\), using only already
established iterates.  Equation~\eqref{eq:mean-noise} then gives
\begin{align*}
 \norm{\widetilde g_{t+1}-\nabla f(\bar x_{t+1})}
 &\le2\delta_\star\omega_{\max}\alpha
       (\alpha\hat\rho_s+H)\\
 &\le10\delta_\star\sqrt\kappa(1.4\alpha r+H)\\
 &\le14\delta_\star\sqrt\kappa\,\mu r
       \left(\kappa+\frac{H}{\mu r}\right).
\end{align*}
Every executed stage satisfies $r_s\ge r_\eps$.
Indeed, the final radius is larger than
\(\sqrt{\eps/\mu}\) when more than one stage is used, and the assertion
is immediate in the one-stage case.  Hence the last display and
\eqref{eq:delta-star} imply
\[
 \norm{\widetilde g_{t+1}-\nabla f(\bar x_{t+1})}
 \le\frac{14\mu r}{60000\sqrt\kappa}
 <\eta_s.
\]
The one-step estimate in Lemma~\ref{lem:disagreement} now bounds
\(\boldsymbol z_{t+1}\), and the convex-combination update bounds
\(\boldsymbol y_{t+1}\); hence it advances the disagreement induction to
time \(t+1\).  Only after that step do we apply the causal trajectory
estimate \eqref{eq:stm-causal-trajectory} through iteration
\(t+1\), whose noise bounds have just been established.  This advances
the mean induction in a causal order.  Thus both estimates hold throughout
the stage without assuming future error bounds.

Lemma~\ref{lem:stm-noise}(ii), \(N_0\ge8\sqrt\kappa\), and
\(N_0\le9\sqrt\kappa\) now give
\[
 f(\bar y_{N_0})-f(x^\star)
 \le
 \left(\frac1{32}+\frac{27}{1200}
 +\frac{6\cdot6561}{1200^2}\right)\mu r^2
 \le\frac{\mu r^2}{8}.
\]
Strong convexity implies
\(\norm{\bar y_{N_0}-x^\star}\le r/2\), so the next stage starts
within \(r_{s+1}=r_s/2\).

Let $\boldsymbol v_{\rm fin}$ be the value of $\boldsymbol v$ after
the last stage and before the optional final \WAVE{} call.
Let \(t_\eps:=\sqrt{\eps/\mu}\). The definition of \(S\) ensures
\(r_{S-1}\le2t_\eps\), and the preceding stage estimate therefore gives
\begin{equation}\label{eq:mean-final-gap}
 f(\bar v_{\rm fin})-f(x^\star)
 \le\frac{\mu r_{S-1}^2}{8}\le\frac\eps2.
\end{equation}
It also gives \(r_{S-1}\le2r_\eps\), including the one-stage case.

\paragraph{Final local-output step.}
\label{subsec:local-output-proof}

Before the final call,
\[
 \norm{\bprojperp\boldsymbol v_{\rm fin}}
 \le
 B_{\rm out}:=
 2\delta_\star\omega_{\max}\sqrt n
 (1.4\alpha r_{S-1}+H).
\]
Since \(r_{S-1}\le2r_\eps\),
\[
 B_{\rm out}
 \le
 \frac{28\sqrt n}{60000\,\kappa^{3/2}}\,r_\eps.
\]
Write $\delta_{\rm loc}:=\min\{1,\kappa/\sqrt n\}$ for the effective
contraction of the final step, including the case where it is skipped.
Then the returned vector $\boldsymbol y_{\rm out}$ satisfies
\[
 \norm{\bprojperp\boldsymbol y_{\rm out}}
 \le
 \frac{28}{60000\sqrt\kappa}r_\eps
 <
 \rho_{\rm loc}:=\frac13\sqrt{\frac{\eps}{\alpha}}.
\]
When \(\delta_{\rm loc}=1\), the inequality already holds and the call
is skipped.  Otherwise the final \WAVE{} call establishes it while
preserving the mean. Thus $\bar y_{\rm out}=\bar v_{\rm fin}$ and
\(\norm{y_{\rm out}^{(i)}-\bar y_{\rm out}}\le\rho_{\rm loc}\)
for every agent.  From \eqref{eq:mean-final-gap} and smooth convexity,
\[
 \norm{\nabla f(\bar y_{\rm out})}
 \le\sqrt{2\alpha(f(\bar y_{\rm out})-f(x^\star))}
 \le\sqrt{\alpha\eps}.
\]
Therefore
\begin{align*}
 f(y_{\rm out}^{(i)})-f(x^\star)
 &\le\frac\eps2+
 \sqrt{\alpha\eps}\,\rho_{\rm loc}
 +\frac\alpha2\rho_{\rm loc}^2\\
 &\le\left(\frac12+\frac13+\frac1{18}\right)\eps
 <\eps.
\end{align*}
This proves the local-output guarantees in
Theorem~\ref{thm:opt-main} and Corollary~\ref{cor:opt-memoryless}.

\paragraph{Gradient and communication counts.}

With $\Lambda_\eps,\Gamma_{\eps,n}$ as in Theorem~\ref{thm:opt-main},
the stage schedule gives $S=O(\Lambda_\eps)$.
The exact gradient count per agent is $N_0S$, proving the stated gradient
bound. From~\eqref{eq:optimization-derived-scales},
\[
 \frac{H}{\mu r_\eps}
 =(1+\kappa)\max\left\{1,\frac{G_0}{\sqrt{\mu\eps}}\right\}.
\]
Writing $b=\max\{1,G_0/\sqrt{\mu\eps}\}$, we have
$\kappa+(1+\kappa)b\le3\kappa b$. Hence
\[
 \ln\frac e{\delta_\star}
 \le1+\ln(180000)+2\ln\kappa
       +\frac12\ln_+\frac{G_0^2}{\mu\eps}
 =O(\Lambda_\eps+\ln\kappa).
\]
There are $N_0S$ calls with factor $\delta_\star$, $S-1$ resets with
factor $1/4$, and one final call only if $\kappa<\sqrt n$.
Thus the sum of the call-cost logarithms is bounded by
\[
 O\!\left(
 \sqrt\kappa\,\Lambda_\eps(\Lambda_\eps+\ln\kappa)
 +1+\ln_+\frac{\sqrt n}{\kappa}\right)
 =O(\sqrt\kappa\,\Lambda_\eps\Gamma_{\eps,n}).
\]
Corollary~\ref{cor:wave-call-cost} gives the two deterministic
communication bounds in~\eqref{eq:opt-main}.

For the memoryless model, fix the objectives and the supplied inputs.
The number of scheduled calls is the deterministic integer
$N_{\rm call}=N_0S+(S-1)+\mathbf1_{\{\kappa<\sqrt n\}}$; their requested factors
$\delta_j$ are also deterministic.
Let $K_0=0$ and let $K_j$ be the next unused round at the start of
call $j$, so $K_{N_{\rm call}}$ is the total communication count.
The scheduler makes its decisions from the observed history.
Consequently, each call endpoint is a stopping time.
Inductively, the conditional finite-duration bound
\eqref{eq:wave-call-memoryless} ensures that every $K_j$ is almost
surely finite. The input vector of call $j$ is
$\mathcal F_{K_j}$-measurable, though it need not be deterministic.
For $j=0,\ldots,N_{\rm call}-1$, the tower property gives
\[
 \mathbb E(K_{j+1}-K_j)
 =\mathbb E\!\left[
    \mathbb E(K_{j+1}-K_j\mid\mathcal F_{K_j})\right]
 \le C_{\rm r}\left(\sqrt\chi+\frac{\chi}{\bar\tau}\right)
       \ln\frac e{\delta_j}.
\]
Summing proves the expected communication bound in
Corollary~\ref{cor:opt-memoryless}. All $N_{\rm call}$ calls terminate
and satisfy their
contraction guarantees simultaneously almost surely. The preceding
pathwise proof therefore establishes the local-output guarantee almost
surely as well.
\end{proof}

\subsection{Accelerated gradient tracking with \WAVE{}}
\label{app:accgt}

When every local objective is strongly convex, \WAVE{} can also replace
the mixing operation in accelerated gradient tracking
\citep{LiLin2024JMLR}. We first verify the properties required of the
resulting mixing maps.

\begin{lemma}[A \WAVE{} call is an average-preserving contraction]
\label{lem:wave-linear}
Fix the operator sequence, scheduler, starting round, and, in the
piecewise-constant model, change reports.  A \WAVE{} call is a linear map
$A\otimes I_d$ with
\[
 A\ones_n=\ones_n,\qquad
 \ones_n^\top A=\ones_n^\top,\qquad
 A=\proj+\proj_\perp A\proj_\perp,
 \qquad \norm{A-\proj}\leq\delta,
 \qquad \norm A\leq1.
\]
\end{lemma}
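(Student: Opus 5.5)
Once the operator sequence, starting round, and change reports are fixed, every step of Algorithm~\ref{alg:wave} is a linear recurrence: each local update in \eqref{eq:moving-window-recurrence} applies $z_\chi(\bLap_{k+t})=z_\chi(\Lap_{k+t})\otimes I_d$ with scalar coefficients that do not depend on the state. The scheduling decisions, namely the window lengths, the interruptions, and the stopping test $q\ge\ln(1/\delta)$, depend only on $\beta,\chi$, the reports, and the counter $q$. None of these depend on $\boldsymbol u$. Therefore the whole call is a fixed composition of maps of the form $p(\Lap_{k},\dots)\otimes I_d$. By induction over windows and local steps, the call equals $A\otimes I_d$, where $A$ is the same recurrence run on $\R^n$, that is, with $d=1$.

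I would then derive the structural identities. Each $\Lap_j$ is symmetric and satisfies $\Lap_j\ones_n=0$. Hence $\ones_n^\top\Lap_j=0$, and $z_\chi(\Lap_j)$ maps $\ones_n$ to $z_0\ones_n$ and preserves $\ones_n^\perp$, since it is symmetric. The mean-preservation computation in Lemma~\ref{lem:wave-progress-accounting} uses $a_0/2=1/z_0$ and $a_tz_0-c_t=1$. Applied to the $n\times n$ recurrence, it shows by induction that every window iterate $V_t$, viewed as a matrix acting on the window input, satisfies $V_t\ones_n=\ones_n$ and $\ones_n^\top V_t=\ones_n^\top$; the left identity is obtained by multiplying by $\ones_n^\top$ on the left. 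The same induction shows that $V_t$ maps $\ones_n^\perp$ into itself. Composing over windows gives $A\ones_n=\ones_n$, $\ones_n^\top A=\ones_n^\top$, and $A\proj_\perp=\proj_\perp A\proj_\perp$. Consequently $A=A\proj+A\proj_\perp=\proj+\proj_\perp A\proj_\perp$, because $A\proj=\ones_n\ones_n^\top/n=\proj$.

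For the norm bounds I would use the decomposition just obtained: $A-\proj=\proj_\perp A\proj_\perp$. This operator vanishes on $\operatorname{span}\{\ones_n\}$ and maps $\ones_n^\perp$ into itself. Its norm is therefore the norm of $A$ restricted to the disagreement subspace. Applying \eqref{eq:wave-call-contract}, that is, Lemma~\ref{lem:wave-progress-accounting} with $d=1$ and $\bprojperp\boldsymbol u=\boldsymbol u$ for $\boldsymbol u\in\ones_n^\perp$, gives $\norm{A-\proj}\le\delta$. Because $A$ is block diagonal with respect to the orthogonal splitting $\operatorname{span}\{\ones_n\}\oplus\ones_n^\perp$, we get $\norm{A}=\max\{1,\norm{A-\proj}\}\le1$ since $\delta<1$. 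The lifted statement follows from $\norm{A\otimes I_d}=\norm{A}$.

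I expect the main obstacle to be the first step: justifying that the call is a single fixed linear map even though it terminates adaptively. The key observation is that $q$ is incremented by deterministic credits, $m_\beta^2/(5\chi)$ or $\ln\ChebT_h(z_0)$, which are never computed from the state. As a result, the number of windows and their lengths are fixed once the reports are fixed, and linearity in $\boldsymbol u$ holds. A secondary point is that $A$ need not be symmetric, because the operators do not commute. For this reason the left-invariance $\ones_n^\top A=\ones_n^\top$ must come from the separate left-multiplication induction rather than from symmetry.
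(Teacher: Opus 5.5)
Your proposal is correct and follows essentially the same route as the paper. The scheduling and stopping rule do not depend on the input, which gives linearity of the form $A\otimes I_d$. The identities $a_0/2=1/z_0$ and $a_tz_0-c_t=1$ give $A\proj=\proj A=\proj$, hence $A=\proj+\proj_\perp A\proj_\perp$. The call contract~\eqref{eq:wave-call-contract}, together with the orthogonal block structure, then yields $\norm{A-\proj}\le\delta$ and $\norm A\le1$. Your explicit left-multiplication induction for $\ones_n^\top A=\ones_n^\top$ just spells out a step the paper states in one line.
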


\begin{proof}
The scheduler and the stopping rule depend on the network sequence and
change reports, but not on the input vector.  Every recurrence step is
linear and acts identically on all vector coordinates, so the full call
has the form $A\otimes I_d$.  Every window fixes consensus vectors and
preserves their average; hence $A\proj=\proj A=\proj$ and therefore
$A=\proj+\proj_\perp A\proj_\perp$.  The call contract
\eqref{eq:wave-call-contract} gives
$\norm{A-\proj}=\norm{\proj_\perp A\proj_\perp}\leq\delta$.
The two displayed blocks act on orthogonal subspaces, and thus
$\norm A=\max\{1,\norm{A-\proj}\}=1$.
\end{proof}

\paragraph{Exact packed implementation.}
Let $A_k$ denote the map of the $k$th \WAVE{} call, with target
$\delta=e^{-1}$.  Use the initialization of
\citet[Algorithm~1]{LiLin2024JMLR}, replacing $W^0$ by $A_0$; the
common initialization makes this first mixing operation immaterial.
For $k\geq1$, given
$\boldsymbol x^k,\boldsymbol z^k,\boldsymbol s^{k-1}$ and
$\boldsymbol y^{k-1}$, compute
\[
 \boldsymbol y^k
   =\theta\boldsymbol z^k+(1-\theta)\boldsymbol x^k,
 \qquad
 \boldsymbol g^k
   =\bigl(\nabla f_i(y_i^k)\bigr)_{i=1}^n.
\]
Each agent packs its three local $d$-vectors
\[
 \left(s_i^{k-1},\;
       \frac{\mu\eta}{\theta}y_i^k+z_i^k,\;
       x_i^k\right)
\]
into one $3d$-vector and runs one \WAVE{} call.  After unpacking, write
\[
 \widetilde{\boldsymbol s}^{k}=A_k\boldsymbol s^{k-1},\qquad
 \widetilde{\boldsymbol z}^{k}
   =A_k\!\left(\frac{\mu\eta}{\theta}\boldsymbol y^k
                +\boldsymbol z^k\right),\qquad
 \widetilde{\boldsymbol x}^{k}=A_k\boldsymbol x^k,
\]
where $A_k$ abbreviates $A_k\otimes I_d$ on stacked $d$-vectors.  Then set
\begin{align*}
 \boldsymbol s^k
   &=\widetilde{\boldsymbol s}^{k}
     +\boldsymbol g^k-\boldsymbol g^{k-1},\\
 \boldsymbol z^{k+1}
   &=\frac{\widetilde{\boldsymbol z}^{k}
       -(\eta/\theta)\boldsymbol s^k}
      {1+\mu\eta/\theta},\\
 \boldsymbol x^{k+1}
   &=\theta\boldsymbol z^{k+1}
     +(1-\theta)\widetilde{\boldsymbol x}^{k}.
\end{align*}
These are exactly equations (13a)--(13d) of \citet{LiLin2024JMLR}
with the same virtual matrix $A_k$ in all three mixing locations.  The
three vectors are communicated in parallel, so this is one \WAVE{} call
per outer iteration, with a constant-factor increase in message size.

\begin{corollary}[Accelerated gradient tracking with \WAVE{}]
\label{cor:accgt}
Let Assumption~\ref{ass:uniform-spectrum} and either
Assumption~\ref{ass:metric-drift} or~\ref{ass:dwell-time} hold with
$\chi\geq4$, and let every $f_i$ be $\mu$-strongly convex and
$\alpha$-smooth. Run accelerated gradient tracking
\citep[eqs.~(13a)--(13d), Theorem~3]{LiLin2024JMLR} from a common point,
with $\eta=(1-e^{-1})^3/(4244\alpha)$ and $\theta=\sqrt{\mu\eta}/2$.
Replace its mixing matrix by one \WAVE{} call with target $e^{-1}$,
applied jointly to the three vectors specified above.
There is a constant $C_0>0$, depending on the initialization and first
virtual iterate, such that, writing
$H_\eps:=1+\ln_+(nC_0/\eps)$, every agent obtains an $\eps$-solution
using $O(\sqrt\kappa H_\eps)$ gradient evaluations.
With the ideal detector in the piecewise-constant model, the number of
communication rounds satisfies
\[
 K=
 \begin{cases}
 O\!\left(\sqrt\kappa[\sqrt\chi+\min\{\beta\chi^2,\chi\}]H_\eps\right),
 &\text{metric drift},\\
 O\!\left(\sqrt\kappa[\sqrt\chi+\chi/\tau]H_\eps\right),
 &\text{piecewise-constant changes},
 \end{cases}
\]
\end{corollary}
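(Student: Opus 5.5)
The plan is to treat the modified method as an instance of Li--Lin's accelerated gradient tracking in which the mixing matrix at iteration $k$ is the virtual matrix $A_k$. Their Theorem~3 is then used as a black box, and the communication cost is charged per call through Corollary~\ref{cor:wave-call-cost}.

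First, I would verify that the family $(A_k)$ meets the hypotheses of \citet[Theorem~3]{LiLin2024JMLR}. These are symmetric-free doubly stochastic mixing with $A_k\ones_n=\ones_n$ and $\ones_n^\top A_k=\ones_n^\top$, together with a uniform contraction $\norm{A_k-\proj}\le 1-\gamma$. In their time-varying setting the contraction is allowed to hold over a window; here it holds per step with $\gamma=1-e^{-1}$. All of this is exactly Lemma~\ref{lem:wave-linear} with $\delta=e^{-1}$. A subtlety is that $A_k$ depends on the operator sequence, the scheduler state and the change reports, but not on the iterates. It is therefore a legitimate deterministic (or adapted) matrix sequence, and their analysis, which never uses sparsity of $W^k$, applies verbatim.

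Second, I would check that the packed implementation reproduces (13a)--(13d) with one common matrix in all three mixing slots. This was shown in the displayed derivation above. Linearity $A_k\otimes I_{3d}$ acts blockwise, so packing changes only the message size. Their Theorem~3, with the prescribed $\eta=(1-e^{-1})^3/(4244\alpha)$ and $\theta=\sqrt{\mu\eta}/2$, then gives linear convergence of a Lyapunov quantity at rate $1-c\sqrt{\mu/\alpha}$ with an absolute $c$, since $\gamma$ is a constant. Hence after $T=O(\sqrt\kappa\,H_\eps)$ outer iterations the squared distances $\sum_i\norm{x_i^T-x^\star}^2\le C_0(1-c/\sqrt\kappa)^T$ fall below $\eps$-level. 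The factor $n$ inside $H_\eps$ absorbs converting a summed bound into every agent's bound; smoothness then converts distance into function gap, with constants folded into $C_0$. Each outer iteration uses one gradient per agent, which gives the gradient count.

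Third, for communication, each of the $T$ calls has target $e^{-1}$. Corollary~\ref{cor:wave-call-cost} then bounds each call by $C_{\rm m}(\sqrt\chi+\min\{\beta\chi^2,\chi\})\ln(e^2)$, respectively $C_{\rm d}(\sqrt\chi+\chi/\tau)\cdot 2$, since calls may start anywhere (Lemma~\ref{lem:dwell-shifted-start}). Summing over $T$ calls gives the claimed $K$. The main obstacle I expect is the first step. I must make sure that Li--Lin's Theorem~3 is stated for generic time-varying mixing matrices with per-step contraction parameter, and confirm how their constants depend on it; the $4244$ and $(1-e^{-1})^3$ suggest $\gamma^3$ scaling. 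I also need to confirm that the constant $C_0$ depending on the first virtual iterate is finite. If their theorem requires symmetric $W^k$, I would instead rederive their consensus-error recursion using only $\norm{A_k-\proj}\le e^{-1}$ and $\norm{A_k}\le1$. I expect this to suffice, since their proof bounds consensus errors purely through these norms.
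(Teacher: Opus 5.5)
Your proposal is correct and takes essentially the paper's route: apply Li--Lin's Theorem~3 to the virtual maps $A_k$, justified by Lemma~\ref{lem:wave-linear} (in their notation window length $\gamma=1$ and $\sigma_1=e^{-1}$, with sparsity and entrywise nonnegativity never used), convert to a per-agent guarantee, and charge each call the factor $\ln(e/e^{-1})=2$ via Corollary~\ref{cor:wave-call-cost}. The only difference is the per-agent conversion. The paper combines the theorem's average-gap and disagreement bounds, which refer to shifted indices and so cost one extra outer iteration, via $f(x_i)-f(x^\star)\le 2\bigl(f(\bar x)-f(x^\star)\bigr)+\alpha\|x_i-\bar x\|^2$; this avoids the extra factor $\kappa$ that your detour through $\|x_i-x^\star\|^2$ and smoothness puts into $C_0$.
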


Fixed networks belong to both models. Thus the fixed-network lower bound
of \citet{ScamanBBLM2017} applies to their gossip oracle model, and
Corollary~\ref{cor:accgt} matches its $\sqrt{\kappa\chi}$ dependence and
single accuracy logarithm whenever $\beta\le(3\chi^{3/2})^{-1}$ or
$\tau\ge\sqrt\chi$.

\begin{proof}[Proof of Corollary~\ref{cor:accgt}]
The strong-convexity result used here is Theorem~3 of
\citet{LiLin2024JMLR}.  Although a virtual \WAVE{} map may be dense and
have signed entries, their proof remains valid for the sequence
$\{A_k\}$.  Indeed, column stochasticity is used to obtain their averaged
recursion (15), while double stochasticity and their estimate (9) are
used to control disagreement.  Lemma~\ref{lem:wave-linear} gives the
corresponding identities and the stronger one-step estimate
\[
 \norm{((A_k-\proj)\otimes I_d)\boldsymbol v}
 \leq e^{-1}\norm{\bprojperp\boldsymbol v}.
\]
The only further mixing estimate in their general $\gamma$-step proof is
their (10) for products of fewer than $\gamma$ matrices.  Here
$\gamma=1$, so it reduces to the identity map.  Entrywise positivity and
one-hop sparsity are therefore not used in the convergence inequalities;
they only describe how one multiplication by $W^k$ is implemented.  In
our construction, $A_k$ is instead implemented by the legal
neighbor-to-neighbor rounds of one \WAVE{} call.  Thus their Theorem~3
applies algebraically with $\gamma=1$, $\sigma_1=e^{-1}$, the displayed
$\eta$, and $\theta=\sqrt{\mu\eta}/2$.

Let $C_0$ be the constant $C$ in that theorem; for $\gamma=1$ it depends
only on the initialization and the first virtual iterate.  For every
integer $T\geq1$, the theorem gives
\begin{align}
 f(\bar x^{T+1})-f(x^\star)
 &\leq (1-\theta)^{T+1}C_0,
 \label{eq:accgt-average-audited}\\
 \frac1n\norm{\bprojperp\boldsymbol x^T}^2
 &\leq (1-\theta)^{T+1}\frac{4C_0}{\alpha}.
 \label{eq:accgt-disagreement-audited}
\end{align}
The two estimates refer to different time indices.  Apply
\eqref{eq:accgt-average-audited} with $T$ and
\eqref{eq:accgt-disagreement-audited} with $T+1$, run one additional
outer iteration, and return the stored local states $x_i^{T+1}$.  Since
$f$ is convex and $\alpha$-smooth,
\[
 f(x_i)-f(x^\star)
 \leq2\bigl(f(\bar x)-f(x^\star)\bigr)
      +\alpha\norm{x_i-\bar x}^2.
\]
Together with
$\norm{x_i^{T+1}-\bar x^{T+1}}^2
 \leq\norm{\bprojperp\boldsymbol x^{T+1}}^2$, this yields
\[
 f(x_i^{T+1})-f(x^\star)
 \leq(2+4n)C_0(1-\theta)^{T+1}
 \qquad (i=1,\ldots,n).
\]
Hence
$T=O(\theta^{-1}[1+\ln_+(nC_0/\eps)])
 =O(\sqrt\kappa[1+\ln_+(nC_0/\eps)])$ outer iterations suffice.
Each iteration evaluates one local gradient and makes one \WAVE{} call.
For $\delta=e^{-1}$, Corollary~\ref{cor:wave-call-cost} bounds the call
cost by
$2C_{\rm m}(\sqrt\chi+\min\{\beta\chi^2,\chi\})$ or
$2C_{\rm d}(\sqrt\chi+\chi/\tau)$, respectively.  Multiplication by the
number of outer iterations proves the two communication bounds.
\end{proof}

\section{Supplementary examples and interpretation}
\label{app:supplementary}

We give a stationary-momentum instability example and relate its
perturbation to relative inexactness. These observations are independent
of the positive complexity proofs.

\subsection{Stationary-momentum resonance example}
A periodic modulation of a quadratic coefficient can amplify the
auxiliary state in a momentum recurrence, much as in a parametrically
excited oscillator. The example below uses the stationary heavy-ball
recurrence obtained from a generic family of limiting Chebyshev
coefficients. This is not the recurrence executed by \WAVE{}, whose
coefficients are fixed by $[\chi^{-1},1]$ and whose momentum is restarted
after every finite window. For this subsection, define
\[
    z_a(\lambda):=\frac{1+a-2\lambda}{1-a},
    \qquad
    q_a:=\frac{1-\sqrt{a}}{1+\sqrt{a}}.
\]
Then $a_t\to2q_a$, $c_t\to q_a^2$, and
\[
2q_a z_a(\Lap)=(1+q_a^2)I-
\frac{4}{(1+\sqrt{a})^2}\Lap,
\]
so the stationary limit is
\begin{equation}\label{eq:hb}
  e_{t+1}=\bigl((1+\gamma)I-h\Lap_t\bigr)e_t-
  \gamma e_{t-1},
  \qquad
  h=\frac{4}{(1+\sqrt{a})^2},\quad
  \gamma=q_a^2.
\end{equation}
For a static matrix with spectrum in $[a,1]$, this recurrence has the
usual asymptotic factor $q_a$.

\begin{proposition}[Resonant divergence of the stationary tuning]
\label{prop:resonance}
Let $n\ge3$, $a\in(0,1/16]$, and $\chi^{-1}\le a/8$. Put
\[
  \lambda_c:=\frac{1+\gamma}{h}=\frac{1+a}{2},
  \qquad
  A:=3\sqrt{a}.
\]
Let $Q\in\mathbb{R}^{n\times(n-1)}$ have orthonormal columns
spanning $\mathbf{1}^{\perp}$, and denote its second column by
$v_{\mathrm{osc}}$. For each phase $s\in\{-1,+1\}$, define
the commuting family
\[
\Lap_t
=
Q\,\diag\bigl(
\chi^{-1},\lambda_t,\lambda_3,\ldots,\lambda_{n-1}
\bigr)Q^{\!\top},
\qquad
\lambda_t
=
\lambda_c\bigl(1+s(-1)^tA\bigr),
\]
where the remaining eigenvalues
$\lambda_j\in[\chi^{-1},1]$ are fixed.

For either phase, the family is admissible and satisfies
\[
  \norm{\Lap_{t+1}-\Lap_t}
  =
  2A\lambda_c
  =
  3(1+a)\sqrt{a}
  \le
  4\sqrt{a}.
\]

Consider the initialization $e_{-1}=0$ and let
\[
  e_0^{\mathrm{osc}}:=\langle v_{\mathrm{osc}},e_0\rangle.
\]
If $e_0^{\mathrm{osc}}\ne0$, then the choice $s=+1$ makes
\eqref{eq:hb} divergent. More precisely, there exists a constant
$C_a>0$, depending only on $a$, such that
\[
  \norm{e_{2t}}
  \ge
  C_a |e_0^{\mathrm{osc}}|(1+\sqrt{a})^t
  \qquad
  \text{for all sufficiently large }t.
\]

If $A>0$ is regarded as the amplitude parameter of the corresponding
scalar period-two recurrence, its exact instability threshold is
\[
  A>\frac{1-\gamma}{1+\gamma}
   =\frac{2\sqrt{a}}{1+a}.
\]
Equivalently, in terms of the operator-drift amplitude
$\beta:=2A\lambda_c$, the threshold is
\[
  \beta>2\sqrt{a}.
\]
\end{proposition}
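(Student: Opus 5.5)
The plan is to reduce everything to a single scalar period-two recurrence and study its two-step monodromy matrix.

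\emph{Admissibility and drift.} Every $\Lap_t$ is $Q\,\diag(\cdot)Q^\top$ with eigenvalues in $[\chi^{-1},1]$ on $\ones_n^\perp$ and kernel $\operatorname{span}\{\ones_n\}$. So it suffices to check $\lambda_c(1\pm A)\in[\chi^{-1},1]$. This follows from $a\le1/16$: then $A=3\sqrt a\le3/4$ and $\lambda_c\le17/32$, giving $\lambda_c(1+A)<1$. For the lower end, $\lambda_c(1-A)\ge1/8\ge a/8\ge\chi^{-1}$. Consecutive matrices differ only in the $v_{\mathrm{osc}}$ eigenvalue, by $2A\lambda_c=3(1+a)\sqrt a\le4\sqrt a$, which is the stated drift.

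\emph{Scalar reduction.} The family commutes and is diagonal in the basis given by $Q$, so \eqref{eq:hb} decouples. The coordinate $x_t:=\langle v_{\mathrm{osc}},e_t\rangle$ satisfies
\[
x_{t+1}=c_tx_t-\gamma x_{t-1},\qquad c_t:=1+\gamma-h\lambda_t=-s(-1)^t b,\qquad b:=(1+\gamma)A,
\]
using $h\lambda_c=1+\gamma$. Write $M_t=\begin{pmatrix}c_t&-\gamma\\1&0\end{pmatrix}$. Over one period, $(x_{2t},x_{2t-1})^\top=(M_1M_0)^t(x_0,0)^\top$, and $M:=M_1M_0$ has trace $c_0c_1-2\gamma=-(b^2+2\gamma)$ and determinant $\gamma^2$. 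Hence its eigenvalues are $-r_\pm$, with
\[
r_\pm+\frac{\gamma^2}{r_\pm}=S:=b^2+2\gamma,\qquad r_+r_-=\gamma^2 .
\]
Both eigenvalues are real, negative and distinct. The spectral radius exceeds $1$ exactly when $S>1+\gamma^2$, i.e.\ $b>1-\gamma$, i.e.\ $A>(1-\gamma)/(1+\gamma)$. With $\gamma=q_a^2$, a short computation gives $(1-q_a^2)/(1+q_a^2)=2\sqrt a/(1+a)$. Multiplying by $2\lambda_c=1+a$ yields the threshold $\beta>2\sqrt a$. This proves the exact instability threshold.

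\emph{Growth rate and nondegeneracy.} For $A=3\sqrt a$ I would use the explicit root
\[
r_+=\tfrac12\bigl(S+b\sqrt{b^2+4\gamma}\bigr)
\]
and show $r_+\ge(1+\sqrt a)^2$. Heuristically $r_+\approx\gamma+b\sqrt\gamma\approx1-4\sqrt a+6\sqrt a=1+2\sqrt a$. I expect this to be the main obstacle: the margin over $1+2\sqrt a+a$ is only of order $a$. The lower bounds I would need are $1+\gamma\ge2-4\sqrt a$, $\sqrt\gamma=q_a\ge1-2\sqrt a$, and $1-\gamma\le4\sqrt a$, together with the restriction $\sqrt a\le1/4$, possibly finishing with a one-variable polynomial check in $\sqrt a$. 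If the margin fails, I would note that any $r_+>1$ already gives divergence, and fall back to a rate $r_+^{t/2}$.

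Nondegeneracy: an eigenvector $(v_1,v_2)$ of $M$ for the eigenvalue $\mu$ satisfies $v_1-\gamma v_2=\mu v_2$, i.e.\ $v_1=(\mu+\gamma)v_2$, so its second component is nonzero. Hence $(x_0,0)$ with $x_0=e_0^{\mathrm{osc}}\neq0$ has a nonzero component $\kappa_+x_0$ along the dominant eigenvector. Because $r_+>r_-$, we get $|x_{2t}|\ge\tfrac12|\kappa_+||x_0|r_+^t$ for all large $t$. Finally, $\norm{e_{2t}}\ge|x_{2t}|$, which gives the claim with $C_a=|\kappa_+|/2$; this constant depends only on $a$, through $b$ and $\gamma$. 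For $s=+1$ the sign pattern is the one fixed in the statement. The computation is symmetric in $s$, so this choice only fixes the phase.
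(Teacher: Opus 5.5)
Your admissibility check, scalar reduction, monodromy computation and threshold are correct and follow the paper's route. Deriving the threshold from monotonicity of $r\mapsto r+\gamma^2/r$ on $[\gamma,\infty)$ is cleaner than the paper's squaring argument.

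\textbf{The gap is the explicit rate, which you leave unproved.} Since $(x_{2t},x_{2t-1})^\top=M^t(x_0,0)^\top$, the claim $\|e_{2t}\|\ge C_a|e_0^{\mathrm{osc}}|(1+\sqrt a)^t$ needs exactly $r_+\ge1+\sqrt a$. You instead set the target $r_+\ge(1+\sqrt a)^2$, which is stronger than required.

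That stronger target happens to be true, since $r_+=1+2\sqrt a+2a+O(a^{3/2})$, but its margin is only of order $a$. The estimates you list cannot see it:
\begin{itemize}
\item The bound $1-\gamma\le4\sqrt a$ discards the $+8a$ in $\gamma=1-4\sqrt a+8a-\cdots$. At $\sqrt a=1/4$ it degenerates to $\gamma\ge0$, although $\gamma=0.36$ there.
\item Even the natural simplification $r_+\ge\gamma+bq_a+b^2/2$ is too weak for your target: at $\sqrt a=1/4$ it gives $1.4922<1.5625$. You would have to keep the full square root.
\end{itemize}
Your fallback only gives $r_+>1$, i.e.\ divergence without the stated base $1+\sqrt a$. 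So, as written, the quantitative assertion is not established.

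\textbf{Repair.} Aim at the correct bound, whose margin is of order $\sqrt a$ rather than $a$.
\begin{itemize}
\item Use $\sqrt{b^2+4\gamma}\ge2q_a$ to get $r_+\ge\gamma+bq_a+b^2/2$.
\item Substitute $t=\sqrt a$, $q_a=(1-t)/(1+t)$, $1+q_a^2=2(1+t^2)/(1+t)^2$ and $1-\gamma=4t/(1+t)^2$, then multiply by $(1+t)^4$.
\item The inequality $r_+\ge1+t$ becomes $P(t)=1+6t-10t^2+32t^3-7t^4+18t^5\ge0$. This holds on $[0,1/4]$ because $10t^2\le\frac52t$ and $7t^4\le32t^3$.
\end{itemize}
This exact rational substitution, rather than first-order bounds on $\gamma$ and $q_a$, is what the paper uses.

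\textbf{Two smaller points in your nondegeneracy step.}
\begin{itemize}
\item The second row of $M=M_1M_0$ is $(c_0,-\gamma)$, not $(1,-\gamma)$. So an eigenvector for $\mu$ satisfies $c_0v_1=(\mu+\gamma)v_2$. Your conclusion $v_2\neq0$ survives because $c_0=-b\neq0$.
\item You also need $(v_+)_1\neq0$. This is required for $x_{2t}$, a first coordinate, to inherit the growth, and for the normalization hidden in $C_a=|\kappa_+|/2$. It follows from the same relation with $\mu=-r_+$, because $r_+>\gamma$.
\end{itemize}
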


The parameter $a$ belongs only to this stationary comparator; the
condition $\chi^{-1}\le a/8$ separates its tuning from the full-spectrum
coefficients on $[\chi^{-1},1]$ used by \WAVE{}. The proposition
motivates finite restarts. Its proof follows the comparison below.

\paragraph{Connection with inexact first-order methods.}
On a disagreement coordinate, replacing the reference operator
$\bLap_{k}$ by $\bLap_{k}+E_t$ changes the gradient from $\bLap_{k} e$ to
$(\bLap_{k}+E_t)e$. The resulting structured error is $E_t e$, with
\[
\norm{E_t e}\le\norm{E_t}\norm e.
\]
Equation~\eqref{eq:voc} keeps this operator structure explicit. A
black-box inexact-oracle reduction instead treats the same term as
arbitrary gradient noise and can therefore give a weaker robustness
certificate. This relative-error interpretation connects the analysis
to \cite{DGN2014,Devolder2013,VGDS2023,KornilovEtAl2025JOTA}.

\subsection{Proof of Proposition~\ref{prop:resonance}}

The columns of $Q$ form a fixed orthonormal basis of
$\mathbf{1}^{\perp}$, so \eqref{eq:hb} decouples into independent
scalar recurrences. Let
\[
  e_t^{\mathrm{osc}}:=\langle v_{\mathrm{osc}},e_t\rangle
\]
denote the coordinate in the oscillating eigendirection. With
\[
\lambda_t=\lambda_c\bigl(1+s(-1)^tA\bigr),
\qquad
h\lambda_c=1+\gamma,
\]
the corresponding scalar recurrence is
\begin{equation}\label{eq:scalar-hb}
  e_{t+1}^{\mathrm{osc}}
  =
  -s(-1)^t(1+\gamma)A\,e_t^{\mathrm{osc}}
  -\gamma e_{t-1}^{\mathrm{osc}}.
\end{equation}

For admissibility,
\[
  \lambda_c=\frac{1+a}{2}\in\left[\frac12,\frac{17}{32}\right],
  \qquad
  A=3\sqrt{a}\le\frac34.
\]
Hence
\[
  \lambda_c(1+A)
  \le
  \frac{17}{32}\cdot\frac74
  =
  \frac{119}{128}
  <1,
\]
whereas
\[
  \lambda_c(1-A)
  \ge
  \frac12\cdot\frac14
  =
  \frac18
  \ge\chi^{-1},
\]
because $\chi^{-1}\le a/8\le1/128$. Thus
$\lambda_t\in[\chi^{-1},1]$ for either phase. Moreover, since the eigenbasis
is fixed and only $\lambda_t$ varies,
\[
  \norm{\Lap_{t+1}-\Lap_t}
  =
  |\lambda_{t+1}-\lambda_t|
  =
  2A\lambda_c
  =
  3(1+a)\sqrt{a}
  \le
  4\sqrt{a}.
\]

\emph{Transfer matrix.}
Set
\[
  r_{\rm HB}:=(1+\gamma)A.
\]
Fix the phase $s=+1$ and set
\[
  M_-:=\begin{pmatrix}-r_{\rm HB}&-\gamma\\1&0\end{pmatrix},
  \qquad
  M_+:=\begin{pmatrix}r_{\rm HB}&-\gamma\\1&0\end{pmatrix}.
\]
These are the even- and odd-step transfer matrices of
\eqref{eq:scalar-hb}, respectively. Consequently,
\[
  \begin{pmatrix}
    e_{2k+2}^{\mathrm{osc}}\\
    e_{2k+1}^{\mathrm{osc}}
  \end{pmatrix}
  =T_2
  \begin{pmatrix}
    e_{2k}^{\mathrm{osc}}\\
    e_{2k-1}^{\mathrm{osc}}
  \end{pmatrix},
  \qquad
  T_2:=M_+M_-.
\]
Explicitly,
\begin{equation*}
\begin{aligned}
T_2
&=
\begin{pmatrix}
  -r_{\rm HB}^2-\gamma&-r_{\rm HB}\gamma\\
  -r_{\rm HB}&-\gamma
\end{pmatrix},\\
\det T_2&=\gamma^2,
\qquad
\operatorname{tr}T_2=-(r_{\rm HB}^2+2\gamma).
\end{aligned}
\end{equation*}

The eigenvalues of $T_2$ are
\[
  \lambda_{\mathrm{st}}
  :=-\frac12\left(r_{\rm HB}^2+2\gamma-r_{\rm HB}\sqrt{r_{\rm HB}^2+4\gamma}\right),
  \qquad
  \lambda_{\mathrm{un}}
  :=-\frac12\left(r_{\rm HB}^2+2\gamma+r_{\rm HB}\sqrt{r_{\rm HB}^2+4\gamma}\right).
\]
They are real and negative because $r_{\rm HB},\gamma>0$ and
$(r_{\rm HB}^2+2\gamma)^2-r_{\rm HB}^2(r_{\rm HB}^2+4\gamma)=4\gamma^2>0$.
The eigenvalue of larger modulus is $\lambda_{\mathrm{un}}$, with
\begin{equation}\label{eq:unstable-modulus}
  |\lambda_{\mathrm{un}}|
  =\frac12
  \left(
    r_{\rm HB}^2+2\gamma+r_{\rm HB}\sqrt{r_{\rm HB}^2+4\gamma}
  \right).
\end{equation}

\emph{Exact instability threshold.}
By \eqref{eq:unstable-modulus},
\[
  |\lambda_{\mathrm{un}}|>1
\]
if and only if
\[
  r_{\rm HB}\sqrt{r_{\rm HB}^2+4\gamma}
  >
  2(1-\gamma)-r_{\rm HB}^2.
\]
If the right-hand side is positive, squaring both sides gives
\[
  r_{\rm HB}^2(r_{\rm HB}^2+4\gamma)
  >
  \bigl(2(1-\gamma)-r_{\rm HB}^2\bigr)^2,
\]
which simplifies to
\[
  r_{\rm HB}^2>(1-\gamma)^2.
\]
Since $r_{\rm HB}>0$, this is equivalent to $r_{\rm HB}>1-\gamma$. If the right-hand
side is nonpositive, the displayed inequality is automatic and again
implies $r_{\rm HB}>1-\gamma$. Therefore
\[
  |\lambda_{\mathrm{un}}|>1
  \quad\Longleftrightarrow\quad
  r_{\rm HB}>1-\gamma.
\]
At $r_{\rm HB}=1-\gamma$, one has $|\lambda_{\mathrm{un}}|=1$ exactly.

Since $r_{\rm HB}=(1+\gamma)A$,
\[
  |\lambda_{\mathrm{un}}|>1
  \quad\Longleftrightarrow\quad
  A>\frac{1-\gamma}{1+\gamma}.
\]
Using
\[
  1-\gamma
  =
  \frac{4\sqrt{a}}{(1+\sqrt{a})^2},
  \qquad
  \frac{1-\gamma}{1+\gamma}
  =
  \frac{2\sqrt{a}}{1+a},
\]
we obtain the exact threshold
\[
  A>\frac{2\sqrt{a}}{1+a}.
\]
In terms of the drift amplitude
$\beta=2A\lambda_c$, this is equivalent to
\[
  \beta>2\sqrt{a}.
\]
Our choice $A=3\sqrt{a}$ lies strictly above this threshold.

\emph{Quantitative rate.}
We claim that
\[
  |\lambda_{\mathrm{un}}|\ge1+\sqrt{a}
  \qquad
  \text{for all }a\in\left(0,\frac1{16}\right].
\]
Since
\[
  \sqrt{r_{\rm HB}^2+4\gamma}
  \ge
  2\sqrt\gamma
  =
  2q_a,
\]
equation \eqref{eq:unstable-modulus} gives
\[
  |\lambda_{\mathrm{un}}|
  \ge
  \gamma+r_{\rm HB} q_a+\frac{r_{\rm HB}^2}{2}.
\]
Using
\[
  r_{\rm HB}=3\sqrt{a}(1+\gamma)
   =3\sqrt{a}(1+q_a^2)
\]
and
\[
  1-\gamma
  =
  \frac{4\sqrt{a}}{(1+\sqrt{a})^2},
\]
the desired inequality reduces, after dividing by $\sqrt{a}$, to
\[
  3q_a(1+q_a^2)
  +
  \frac92\sqrt{a}\,(1+q_a^2)^2
  \ge
  1+\frac{4}{(1+\sqrt{a})^2}.
\]
Substitute
\[
  t:=\sqrt{a}\in\left(0,\frac14\right],
  \qquad
  q_a=\frac{1-t}{1+t},
  \qquad
  1+q_a^2=\frac{2(1+t^2)}{(1+t)^2}.
\]
Multiplying by $(1+t)^4>0$, the required inequality becomes
\begin{equation*}
\begin{aligned}
P(t)
&:=
6(1-t^4)
+18t(1+t^2)^2
-(1+t)^4
-4(1+t)^2\\
&=
1+6t-10t^2+32t^3-7t^4+18t^5
\ge0.
\end{aligned}
\end{equation*}
For $t\in[0,1/4]$,
\[
  10t^2\le\frac{10}{4}t,
  \qquad
  7t^4\le\frac74t^3\le32t^3.
\]
Consequently,
\[
  P(t)\ge1+3.5t>0,
\]
which proves $|\lambda_{\mathrm{un}}|\ge1+\sqrt{a}$.

\emph{Divergence of the iterate.}
Since
\[
  \lambda_{\mathrm{st}}\lambda_{\mathrm{un}}
  =
  \det T_2
  =
  \gamma^2,
\]
we have
\[
  |\lambda_{\mathrm{st}}|
  =
  \frac{\gamma^2}{|\lambda_{\mathrm{un}}|}
  <1
  <
  |\lambda_{\mathrm{un}}|.
\]
Thus $T_2$ has one-dimensional stable and unstable eigenlines.

Under the initialization $e_{-1}=0$,
\[
  \begin{pmatrix}
    e_{2t}^{\mathrm{osc}}\\
    e_{2t-1}^{\mathrm{osc}}
  \end{pmatrix}
  =
  e_0^{\mathrm{osc}}\,T_2^t
  \begin{pmatrix}1\\0\end{pmatrix}.
\]
Moreover,
\[
  T_2
  \begin{pmatrix}1\\0\end{pmatrix}
  =
  \begin{pmatrix}
    -r_{\rm HB}^2-\gamma\\
    -r_{\rm HB}
  \end{pmatrix},
\]
which is not collinear with $(1,0)^\top$ because $r_{\rm HB}>0$. Hence
$(1,0)^\top$ does not belong to the stable eigenline of $T_2$.

Let $v_{\mathrm{un}}$ and $v_{\mathrm{st}}$ be eigenvectors
corresponding to $\lambda_{\mathrm{un}}$ and
$\lambda_{\mathrm{st}}$, respectively. We may therefore write
\[
  \begin{pmatrix}1\\0\end{pmatrix}
  =
  a_{\mathrm{un}}v_{\mathrm{un}}
  +
  a_{\mathrm{st}}v_{\mathrm{st}},
  \qquad
  a_{\mathrm{un}}\ne0.
\]
The first coordinate $(v_{\mathrm{un}})_1$ is also nonzero. Indeed, if
$(v_{\mathrm{un}})_1=0$, then the first row of the eigenvalue equation
would give
\[
  -r_{\rm HB}\gamma(v_{\mathrm{un}})_2=0,
\]
forcing $v_{\mathrm{un}}=0$, a contradiction.

It follows that
\[
  e_{2t}^{\mathrm{osc}}
  =
  e_0^{\mathrm{osc}}\left(
    a_{\mathrm{un}}(v_{\mathrm{un}})_1
      \lambda_{\mathrm{un}}^t
    +
    a_{\mathrm{st}}(v_{\mathrm{st}})_1
      \lambda_{\mathrm{st}}^t
  \right).
\]
Since
\[
  \frac{|\lambda_{\mathrm{st}}|}{|\lambda_{\mathrm{un}}|}<1,
\]
the unstable term dominates for all sufficiently large $t$. Therefore
there exists a constant $C_a>0$, depending only on $a$, such that
\[
  |e_{2t}^{\mathrm{osc}}|
  \ge
  C_a |e_0^{\mathrm{osc}}||\lambda_{\mathrm{un}}|^t
  \ge
  C_a |e_0^{\mathrm{osc}}|(1+\sqrt{a})^t
\]
for all sufficiently large $t$.

Finally, since $e_{2t}^{\mathrm{osc}}$ is one orthonormal coordinate of
$e_{2t}$,
\[
  \norm{e_{2t}}
  \ge
  |e_{2t}^{\mathrm{osc}}|.
\]
Thus
\[
  \norm{e_{2t}}
  \ge
  C_a |e_0^{\mathrm{osc}}|(1+\sqrt{a})^t.
\]
\hfill$\square$

\section{Supplementary numerical experiments}
\label{app:supplementary-experiments}

The supplementary material contains the code, fixed seeds, and raw data
needed to reproduce every reported result.

\subsection{Protocol and full statistics for
Figure~\ref{fig:fair-consensus-main}}
\label{app:fair-consensus-protocol}

\paragraph{Methods and performance measure.}
We compare \WAVE{} with the classical Chebyshev semi-iteration
\citep{GolubVarga1961}, the time-varying recurrence of
\citet{MontijanoMontijanoSagues2013}, minimax Richardson iteration, and
gossip. Classical Chebyshev is run continuously without restarts and is
included to isolate the effect of the finite windows and restarts in
\WAVE{}. All methods process the same normalized operators and spectral
bounds. We evaluate the exact operator norm $r_k$, rather than sampling an
initial state. The threshold $10^{-6}$ is used only to report
first-passage rounds and is not supplied to any method.

\paragraph{Metric-drift stress test.}
Panel~\textup{(a)} uses the six-dimensional invariant block of the
weighted-graph family in Appendix~\ref{app:high-drift-family}. We set
$s=400$, so $\chi=s^2=160000$, and
$\beta=120/[s(4s^2+9)]$. Hence $\beta\chi^{3/2}\approx30$, and the window
rule $m=\lfloor\min\{\sqrt\chi,(3\beta\chi)^{-1}\}\rfloor$ gives $m=4$.
The methods run for $1.25\cdot10^6$ rounds, and $r_k$ is recorded every
$2500$ rounds. \WAVE{} reaches $10^{-6}$ after $277500$ rounds, whereas
Montijano and Richardson require $1107500$ rounds and gossip does not
reach the target. Classical Chebyshev leaves the plotted range within
$50000$ rounds.

\paragraph{Piecewise-constant graph pairs.}
For each of $32$ fixed test seeds, we independently sample two connected
random geometric graphs on $n=100$ vertices in the unit square, with
connection radius $\sqrt{1.25\ln(n)/(\pi n)}$. Both Laplacians are divided
by the larger of their two largest eigenvalues, and $\chi$ is computed from
the smallest positive eigenvalue of the normalized pair. Across the test
set, $\chi\in[339,2873]$, with median $905$. The operator
alternates between the two graphs every
$\operatorname{round}(\sqrt\chi)$ rounds. Each change is reported to
\WAVE{} immediately, but the interval between changes is not supplied to
the algorithm. Curves show the pointwise median of $r_k$ over the $32$
pairs; the reported round counts are medians of the per-instance
first-passage times. The Montijano width parameter $q_{\rm M}=0.99$ is
selected on validation instances disjoint from this test set and then held
fixed.

The intersection graph of every sampled pair is disconnected: it has only
$5$ to $24$ edges on $100$ vertices and cannot contain a connected
spanning subgraph. The assumptions of the non-recoverable-links method
(NRL) of \citet{MetelevICML2023} are therefore not satisfied on this
benchmark.

\begin{lemma}[Exact evaluation of the consensus gap]
\label{lem:exact-gap}
Let $\Phi_k$ be the linear map generated after $k$ rounds by any method
above. Since these methods preserve averages and consensus states,
$\bproj\Phi_k=\Phi_k\bproj=\bproj$. Therefore, a run initialized with
$X_0=\bprojperp$ satisfies
\[
 X_k=\bprojperp\Phi_k\bprojperp,
 \qquad
 \norm{X_k}=r_k .
\]
\end{lemma}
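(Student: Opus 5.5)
The plan is to write the whole run as one linear map and read off the consensus and disagreement blocks. First I will show that every method compared in Figure~\ref{fig:fair-consensus-main} acts linearly and identically on each coordinate of the stacked state. Gossip, minimax Richardson, classical Chebyshev, the recurrence of \citet{MontijanoMontijanoSagues2013}, and \WAVE{} all use scalar coefficients that depend only on the round index, on $\chi$, on $\beta$, and, for \WAVE{}, on the reported change times, never on the state. For the two-step recurrences I will take $\Phi_k$ to be the state map $\boldsymbol u_0\mapsto\boldsymbol u_k$, with the momentum state initialised from $\boldsymbol u_0$ as in each method. Then $\Phi_k$ is a polynomial expression in the operators $\bLap_0,\ldots,\bLap_{k-1}$. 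Because the scheduler does not depend on the input, it is the same map for every initial vector. This is the same observation already used in the proof of Lemma~\ref{lem:wave-linear}.

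Next I will establish $\bproj\Phi_k=\Phi_k\bproj=\bproj$. Every operator satisfies $\bLap_j\bproj=\bproj\bLap_j=0$. Each step's coefficients satisfy the mean-preservation normalisation: for Chebyshev and \WAVE{} this is $a_tz_0-c_t=1$ together with $a_0/z_0\cdot z_0/2=1$, as in Lemma~\ref{lem:wave-progress-accounting}, and analogous checks hold for the others. Applying $\bproj$ on the left commutes through each step and reproduces the scalar recurrence evaluated at $\lambda=0$, which equals $1$; this gives $\bproj\Phi_k=\bproj$. Applying the map to a consensus input, every $\bLap_j$ term vanishes and the same scalar identity shows the input is fixed, so $\Phi_k\bproj=\bproj$. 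This coefficient check for each comparator is the only step needing care. I will verify it method by method from the recurrence definitions, noting that all of them are built as residual polynomials normalised at $0$.

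Finally I will conclude. Linearity lets the method be run on a matrix initial state $X_0$ applied column by column, giving $X_k=\Phi_kX_0$. With $X_0=\bprojperp$ this is $X_k=\Phi_k\bprojperp$. From $\bproj\Phi_k\bprojperp=\bproj\bprojperp=0$ we get $\Phi_k\bprojperp=\bprojperp\Phi_k\bprojperp$. For any $\boldsymbol e_0$ in the disagreement subspace, $\boldsymbol e_k=\Phi_k\boldsymbol e_0$, since the average is preserved and $\boldsymbol e_0=\bprojperp\boldsymbol e_0$. Hence $r_k=\sup_{\boldsymbol e_0\ne0}\norm{\Phi_k\bprojperp\boldsymbol e_0}/\norm{\bprojperp\boldsymbol e_0}=\norm{\bprojperp\Phi_k\bprojperp}=\norm{X_k}$. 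Here the supremum can be restricted to $\ones_n^\perp\otimes\R^d$, because every initial error lies there.
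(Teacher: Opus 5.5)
Your proposal is correct and follows the paper's proof: mean preservation gives $X_k=\Phi_k\bprojperp=\bprojperp\Phi_k\bprojperp$, and then $\boldsymbol e_k=X_k\boldsymbol e_0$ on the disagreement subspace, together with $X_k$ vanishing on the consensus subspace, yields $\norm{X_k}=r_k$. The only difference is that you also verify the premise $\bproj\Phi_k=\Phi_k\bproj=\bproj$ from the coefficients, which the paper takes as given in the statement (for \WAVE{} it is Lemma~\ref{lem:wave-progress-accounting}); in that extra check, the first-step normalisation should read $(a_0/2)\,z_0=1$, since $a_0/z_0\cdot z_0/2$ equals $1/z_0$, not $1$.
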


\begin{proof}
Average preservation and invariance of consensus give
$\ones_n^\top\Phi_k=\ones_n^\top$ and
$\Phi_k\ones_n=\ones_n$, which are equivalent to
$\bproj\Phi_k=\Phi_k\bproj=\bproj$. Hence
$X_k=\Phi_k\bprojperp=\bprojperp\Phi_k\bprojperp$. For every
$\boldsymbol e_0\in\ones_n^\perp$, we have
$\boldsymbol e_k=X_k\boldsymbol e_0$, and $X_k$ vanishes on
$\operatorname{span}\{\ones_n\}$. Its spectral norm is therefore the
worst-case ratio defining $r_k$.
\end{proof}

\begin{table}[htbp]
\centering
\small
\caption{Paired results for the piecewise-constant experiment in
Figure~\ref{fig:fair-consensus-main}\textup{(b)}. The Chebyshev median is
conditional on its $12$ successful runs. Ratios are baseline rounds divided
by \WAVE{} rounds on successful pairs; brackets give the interquartile
range. A missing first-passage time counts as later than \WAVE{} in the
last column.}
\label{tab:main-piecewise-paired}
\begin{tabular}{@{}lcccc@{}}
\toprule
Method & Successes & Median rounds & Median ratio & \WAVE{} earlier \\
\midrule
\WAVE{} & $32/32$ & $180$ & -- & -- \\
Chebyshev, no restarts & $12/32$ & $516$ & $3.42$ $[2.52,3.56]$ & $32/32$ \\
Montijano & $32/32$ & $344$ & $1.98$ $[1.75,2.18]$ & $32/32$ \\
Richardson & $32/32$ & $344$ & $1.98$ $[1.77,2.18]$ & $32/32$ \\
Gossip & $32/32$ & $422$ & $2.48$ $[2.35,2.66]$ & $32/32$ \\
\bottomrule
\end{tabular}
\end{table}

\subsection{Sensitivity to the interval between changes}
\label{app:piecewise-interval-sensitivity}

We vary the graph density, the number of operators in the sequence, and
the interval $p$ between changes. A random geometric graph on $n=100$
nodes uses radius $\sqrt{c_{\rm r}\ln(n)/(\pi n)}$, with $c_{\rm r}=2$
for RGG and $c_{\rm r}=1.25$ for sparse RGG. Each sequence contains two
or four independently sampled connected graphs, and the operator changes
every $p=\max\{1,\operatorname{round}(c\sqrt\chi)\}$ rounds for
$c\in\{1/8,1/4,1/2,3/4,1,3/2,2,3\}$.

Each configuration uses $32$ fixed test seeds. Montijano's parameter is
selected from $\{0.5,0.7,0.9,0.99\}$ on six disjoint validation
sequences. Classical Chebyshev is never restarted. The round budget is
$\min\{20\chi,60000\}$, and the target is $r_k\leq10^{-8}$. \WAVE{} has
the smallest median in $30$ of the $32$ settings, ties once, and is within
$1\%$ of the smallest median in the remaining setting. All four
switching-stable methods reach the target in every run, whereas Chebyshev
without restarts frequently fails when changes are rapid; the success
counts are given in Table~\ref{tab:chebyshev-success}.

Because independently sampled graphs can have complementary edges, more
frequent changes can improve the observed mixing rate. This sweep is
therefore a robustness comparison across change frequencies, rather than
an empirical equality for the worst-case term $\chi/\tau$ in
Theorem~\ref{thm:dwell-det}.

\begin{figure}[htbp]
\centering
\includegraphics[width=\textwidth]
{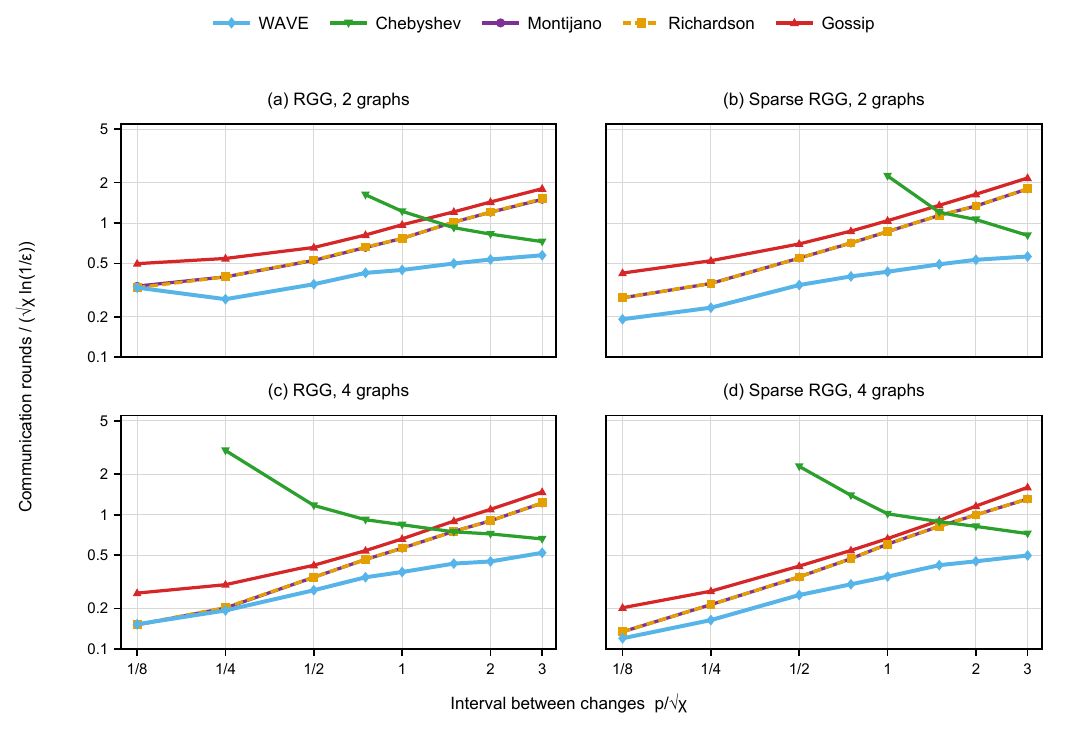}
\caption{Median rounds to reach $r_k\leq10^{-8}$, normalized by
$\sqrt\chi\ln(1/\eps)$, over $32$ independently generated sequences.
Classical Chebyshev is run without restarts; its curve is omitted when
fewer than $17$ runs reach the target.}
\label{fig:piecewise-interval-sensitivity}
\end{figure}

\begin{table}[htbp]
\centering
\small
\setlength{\tabcolsep}{4pt}
\caption{Number of the $32$ runs in which classical Chebyshev without
restarts reaches $r_k\leq10^{-8}$ within the round budget. \WAVE{},
Montijano, Richardson, and gossip succeed in all runs.}
\label{tab:chebyshev-success}
\begin{tabular}{@{}lrrrrrrrr@{}}
\toprule
& \multicolumn{8}{c}{$p/\sqrt\chi$}\\
Graphs & $1/8$ & $1/4$ & $1/2$ & $3/4$ & $1$ & $3/2$ & $2$ & $3$\\
\midrule
RGG, 2 graphs        & 0 & 0 & 16 & 29 & 30 & 32 & 32 & 32\\
RGG, 4 graphs        & 0 & 25 & 31 & 32 & 32 & 32 & 32 & 32\\
Sparse RGG, 2 graphs & 0 & 0 & 6  & 13 & 22 & 31 & 32 & 32\\
Sparse RGG, 4 graphs & 0 & 0 & 25 & 32 & 32 & 32 & 32 & 32\\
\bottomrule
\end{tabular}
\end{table}

\FloatBarrier

\subsection{Square-root scaling in \texorpdfstring{$\chi$}{chi}}
\label{app:sqrt-chi-scaling}

The comparison above covers a limited range of condition numbers.
Corollary~\ref{cor:small-drift} makes a sharper prediction: when
$\beta\leq(3\chi^{3/2})^{-1}$, the number of rounds grows as
$\sqrt\chi$, as for a fixed network. We test this prediction over a
$64$-fold range of $\chi$.

\textbf{Design.}
We draw one support graph from $G(40,0.2)$ and accept it only when the
graph and both halves of a fixed balanced cut are connected; the accepted
support has $166$ edges. Edges inside the halves have base weight $1$, and
all cut edges share a base weight $w$. For each
$\chi\in\{25,50,100,200,400,800,1600\}$ we form $16$ endpoint pairs by
multiplying every base weight by $1+0.18s_e$ in $\Lap_A$ and by
$1-0.18s_e$ in $\Lap_B$, with independent random signs
$s_e\in\{\pm1\}$, and tune $w$ by bisection so that the pair has condition
bound exactly $\chi$ under the normalization of
Appendix~\ref{app:fair-consensus-protocol}. The two endpoints do not
commute. The operator moves between them as in the metric-drift regime,
with $\beta=(400\chi^{3/2})^{-1}$, and \WAVE{} uses windows of length
$\lfloor\sqrt\chi\rfloor$. As a control, the same method is run on the
fixed operator at the starting point of the path.

\textbf{Result.}
The median number of rounds to reach $r_k\leq10^{-6}$ grows from $51$ at
$\chi=25$ to $421$ at $\chi=1600$
(Figure~\ref{fig:wave-sqrt-chi-scaling}). A least-squares fit of
$K=C\chi^p$ to the seven medians gives $p=0.505$ with $R^2=0.9998$. The
medians under metric drift and on the fixed network coincide at every
$\chi$. Thus, at the drift scale of Corollary~\ref{cor:small-drift}, the
observed communication counts retain the fixed-network $\sqrt\chi$
dependence.

\begin{figure}[H]
\centering
\includegraphics[width=\textwidth]
{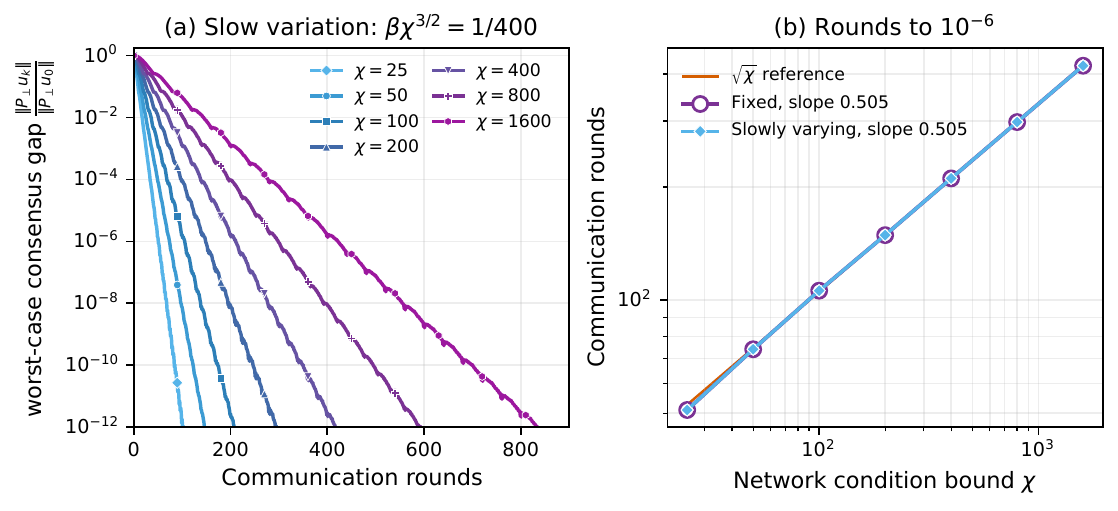}
\caption{Condition-number scaling of \WAVE{} at
$\beta=(400\chi^{3/2})^{-1}$. \textup{(a)} Median worst-case consensus gap
$r_k$ over $16$ noncommuting endpoint pairs for each $\chi$.
\textup{(b)} Median number of rounds to reach $r_k\leq10^{-6}$ for the
metric-drift runs and the fixed-network control; the two coincide, and
both fitted slopes equal $0.505$.}
\label{fig:wave-sqrt-chi-scaling}
\end{figure}

\subsection{Controlled high-drift family}
\label{app:high-drift-family}

We describe the weighted-graph family used in
Figure~\ref{fig:fair-consensus-main}\textup{(a)}. This is a controlled
stress test of the metric-drift bound rather than a random graph model.
Fix $s\geq400$, let $a=s^{-2}$ and $\chi=s^2$, and take $b=3$. Define
$J_b:=\ones_b\ones_b^\top/b$, $P_b:=I_b-J_b$, and, for
$j=0,\ldots,b-1$, set
$u_j:=\sqrt{2/b}\cos(2\pi j/b)$ and
$v_j:=\sqrt{2/b}\sin(2\pi j/b)$. Let
\[
 \theta:=2\arctan\frac{3}{2s},\qquad
 z_k:=\sin(k\theta)u+\cos(k\theta)v.
\]
For $P_4:=I_4-\ones_4\ones_4^\top/4$, set
$h_1=(1,1,-1,-1)^\top/2$,
$h_2=(1,-1,1,-1)^\top/2$, and
$w_k:=\cos(\pi k/(2s))h_1+\sin(\pi k/(2s))h_2$. The normalized
Laplacian on four groups of $b$ nodes is
\begin{equation}
\label{eq:high-drift-family}
 \Lap_k=
 I_4\otimes(P_b-10a z_kz_k^\top)
 +a(P_4+w_kw_k^\top)\otimes J_b.
\end{equation}
An entrywise calculation shows that \eqref{eq:high-drift-family} is the
Laplacian of a connected weighted graph. Its spectrum consists of $0$,
$a$ twice, $2a$, $1-10a$ four times, and $1$ four times. Hence the
condition bound is $\chi=s^2$. Moreover,
\[
 \norm{\Lap_{k+1}-\Lap_k}
 =\frac{120}{s(4s^2+9)}=:\beta,
 \qquad
 \beta\chi^{3/2}=\frac{120s^2}{4s^2+9}\approx30.
\]

The disagreement dynamics splits into repeated invariant blocks. One
representative block, together with the group-mean block, is the
six-dimensional matrix used by the code:
\[
 B_k=\diag(I_2,aI_2,a,1)
 -10a\,(z_kz_k^\top\oplus0_4)
 +a\,(0_2\oplus \widetilde w_k\widetilde w_k^\top\oplus0_2),
\]
where
$\widetilde w_k=(\cos(\pi k/(2s)),\sin(\pi k/(2s)))^\top$.
The repeated blocks do not change the spectral norm, so propagating $B_k$
gives the exact worst-case consensus gap of the full graph family. The
current window rule selects
\[
 m_s=\left\lfloor(3\beta\chi)^{-1}\right\rfloor
 =\left\lfloor\frac{4s^2+9}{360s}\right\rfloor.
\]
For Figure~\ref{fig:fair-consensus-main}\textup{(a)}, $s=400$ and
$m_s=4$. All five methods are applied to the same block sequence. The
reported values are exact up to the sampling interval of $2500$ rounds.
Classical Chebyshev is run without restarts and exceeds a gap of $10^6$
within $50000$ rounds, whereas \WAVE{} reaches $r_k\leq10^{-6}$ after
$277500$ rounds.

\end{document}